\author{Liran Shaul}
\address{Universiteit Antwerpen, Departement Wiskunde-Informatica, Middelheim campus,
Middelheimlaan 1,
2020 Antwerp, Belgium}
\email{Liran.Shaul@uantwerpen.be}
\newtheorem{thm}[equation]{Theorem}
\newtheorem{cor}[equation]{Corollary}
\newtheorem{prop}[equation]{Proposition}
\newtheorem{lem}[equation]{Lemma}
\theoremstyle{definition}
\newtheorem{rem}[equation]{Remark}
\newtheorem{exa}[equation]{Example}
\newtheorem{warn}[equation]{Warning}
\newcommand{\opn}{\operatorname}
\newcommand{\mfrak}[1]{\mathfrak{#1}}
\newcommand{\mrm}[1]{\mathrm{#1}}
\renewcommand{\k}{\Bbbk}
\renewcommand{\a}{\mfrak{a}}
\renewcommand{\b}{\mfrak{b}}
\newcommand*\openquote{\makebox(25,-10){\scalebox{3}{``}}}
\newcommand*\closequote{\makebox(25,-10){\scalebox{3}{''}}}
\colorlet{shadecolor}{White}
\newif\if@right
\def\shadequote{\@righttrue\shadequote@i}
\def\shadequote@i{\begin{snugshade}\begin{quote}\openquote}
\def\endshadequote{%
  \if@right\hfill\fi\closequote\end{quote}\end{snugshade}}
\numberwithin{equation}{section} 
\begin{document}

\title{Hochschild cohomology commutes with adic completion}
\begin{abstract}
For a flat commutative $\k$-algebra $A$ such that the enveloping algebra $A\otimes_{\k} A$ is noetherian, given a finitely generated bimodule $M$, we show that the adic completion of the Hochschild cohomology module $\mrm{HH}^n(A/\k,M)$ is naturally isomorphic to $\mrm{HH}^n(\widehat{A}/\k,\widehat{M})$. To show this, we (1) make a detailed study of derived completion as a functor $\mrm{D}(\opn{Mod} A) \to \mrm{D}(\opn{Mod} \widehat{A})$ over a non-noetherian ring $A$; (2) prove a flat base change result for weakly proregular ideals; and (3) Prove that Hochschild cohomology and analytic Hochschild cohomology of complete noetherian local rings are isomorphic, answering a question of Buchweitz and Flenner. Our results makes it possible for the first time to compute the Hochschild cohomology of $\k[[t_1,\dots,t_n]]$ over any noetherian ring $\k$, and open the door for a theory of Hochschild cohomology over formal schemes.
\end{abstract}

\thanks{The author acknowledges the support of the European Union for the ERC grant No 257004-HHNcdMir.}

\maketitle

\setcounter{tocdepth}{1}
\tableofcontents

All rings in this paper are assumed to be commutative and unital. 

\setcounter{section}{-1}
\section{Introduction} 

Since its introduction in \cite{Ho}, Hochschild cohomology has been the prominent cohomology theory for associative algebras. In commutative algebra and algebraic geometry, its importance was first demonstrated by the celebrated theorem of Hochschild, Kostant and Rosenberg. See \cite{Io} for a survey of the use of Hochschild cohomology in commutative algebra. 

The aim of this paper is to initiate the study of Hochschild cohomology in the category of  adic rings. Adic rings, the affine pieces of the theory of formal schemes, are by definition commutative noetherian rings $A$ which are $\a$-adically complete with respect to some ideal $\a\subseteq A$. In the survey \cite{Io} mentioned above, Ionescu states:

\begin{shadequote}
In our survey no results about Hochschild cohomology of a topological algebra
were mentioned. This is because these kind of results are missing completely.
\end{shadequote}
As far as we know, little has changed regarding this statement since then. One of the main difficulties in developing such a theory can be already observed in the most simple example of an adic ring: Let $\k$ be a field of characteristic $0$, and let $A = \k[[t]]$. The construction of Hochschild cohomology involves the enveloping algebra of $A$. But even in this simple case, the enveloping algebra $\k[[t]]\otimes_{\k} \k[[t]]$ is a non-noetherian ring of infinite Krull dimension, so it is very difficult to do homological algebra over it. Passing to the completion of this enveloping algebra, one obtains the much more manageable completed tensor product $\k[[t]]\widehat{\otimes}_{\k} \k[[t]] \cong \k[[t_1,t_2]]$. However, from a homological point of view, this step is highly non-trivial, as it involves the ring map from $\k[[t]]\otimes_{\k} \k[[t]]$ to its completion. Completions of non-noetherian rings are in general poorly behaved (for instance, they need not be flat). In this paper we develop the homological tools needed to overcome this difficulty, and use them to study the Hochschild cohomology of such adic algebras.

Here is a more detailed description of the content of this paper. First, in Section \ref{section:review} we review some preliminaries on Hochschild cohomology and about the derived torsion and derived completion functors. In particular, we recall the notion of a weakly proregular ideal in a commutative ring. Weak proregularity is the right condition in order for the derived torsion and derived completion functors to possess good behavior.

In Section \ref{section:WPR}, we prove that in most enveloping algebras of adic rings occurring in nature, the ideal of definition of the adic topology is weakly proregular. This result has also interesting implications in derived algebraic geometry of formal schemes. See Corollary \ref{cor:derived-fiber-of-formal}.

Given a (not necessarily noetherian) ring $A$ and a weakly proregular ideal $\a\subseteq A$, we study in Section \ref{section:widehat} the derived functors of the functors
\[
\widehat{\Gamma}_{\a} (M) := \varinjlim \opn{Hom}_A(A/{\a}^n,M) : \opn{Mod} A \to \opn{Mod} \widehat{A}
\]
and
\[
\widehat{\Lambda}_{\a} (M) := \varprojlim A/{\a}^n \otimes_A M : \opn{Mod} A \to \opn{Mod} \widehat{A},
\]
where $\widehat{A}$ is the $\a$-adic completion of $A$. To cope with the possible lack of flatness of the completion map $A\to\widehat{A}$, we use DG-homological algebra techniques. From the results of this section, we deduce in Corollary \ref{cor:generalized-GM} the following generalized Greenlees-May duality:
\begin{thm}
Let $A$ be a commutative ring, let $\a\subseteq A$ be a weakly proregular ideal, and let $M,N \in \mrm{D}(\opn{Mod} A)$. Then there are isomorphisms
\[
\mrm{L}\widehat{\Lambda}_{\a} (\mrm{R}\opn{Hom}_A(M,N)) \cong \mrm{R}\opn{Hom}_A( \mrm{R}\widehat{\Gamma}_{\a} (M), N) \cong \mrm{R}\opn{Hom}_A(M,\mrm{L}\widehat{\Lambda}_{\a} (N))
\]
of functors 
\[
\mrm{D}(\opn{Mod} A) \times \mrm{D}(\opn{Mod} A) \to \mrm{D}(\opn{Mod} \widehat{A}).
\]
\end{thm}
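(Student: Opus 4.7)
The plan is to reduce to the classical Greenlees--May duality for weakly proregular ideals (valued in $\mrm{D}(\opn{Mod} A)$) and then upgrade the resulting isomorphisms to $\mrm{D}(\opn{Mod} \widehat{A})$ using the structural results of Section \ref{section:widehat}. Since $\a$ is finitely generated and weakly proregular, I fix a generating sequence $\bsym{a} = (a_1,\dots,a_r)$ of $\a$ and work with the infinite telescope complex $\opn{Tel}(\bsym{a})$ of flat $A$-modules, which simultaneously represents $\mrm{R}\Gamma_\a$ via $\opn{Tel}(\bsym{a}) \otimes_A^{\mrm{L}} (-)$ and $\mrm{L}\Lambda_\a$ via $\mrm{R}\opn{Hom}_A(\opn{Tel}(\bsym{a}), -)$; both identifications are classical consequences of weak proregularity.

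At the level of $\mrm{D}(\opn{Mod} A)$, taking a K-flat resolution $P \to M$ and a K-injective resolution $N \to I$, the three Hom-complexes
\[
\opn{Hom}_A(\opn{Tel}(\bsym{a}) \otimes_A P,\, I), \quad \opn{Hom}_A(\opn{Tel}(\bsym{a}),\, \opn{Hom}_A(P,I)), \quad \opn{Hom}_A(P,\, \opn{Hom}_A(\opn{Tel}(\bsym{a}), I))
\]
are canonically isomorphic by the standard currying maps, and respectively compute $\mrm{R}\opn{Hom}_A(\mrm{R}\Gamma_\a M, N)$, $\mrm{L}\Lambda_\a \mrm{R}\opn{Hom}_A(M,N)$, and $\mrm{R}\opn{Hom}_A(M, \mrm{L}\Lambda_\a N)$. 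This yields the duality over $A$.

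To lift the duality to $\mrm{D}(\opn{Mod} \widehat{A})$, I would invoke the identifications from Section \ref{section:widehat} that $\mrm{L}\widehat{\Lambda}_\a$ and $\mrm{R}\widehat{\Gamma}_\a$ agree with $\mrm{L}\Lambda_\a$ and $\mrm{R}\Gamma_\a$ after restriction of scalars along $A \to \widehat{A}$. Each object in the corollary then acquires a canonical $\widehat{A}$-structure: on $\mrm{L}\widehat{\Lambda}_\a \mrm{R}\opn{Hom}_A(M,N)$ by definition of the functor; on $\mrm{R}\opn{Hom}_A(\mrm{R}\widehat{\Gamma}_\a M, N)$ through the $\widehat{A}$-action on its $\a$-torsion source; and on $\mrm{R}\opn{Hom}_A(M, \mrm{L}\widehat{\Lambda}_\a N)$ through its second argument.

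The hard part, and main obstacle, is to verify that the three adjunction isomorphisms respect these a priori distinct $\widehat{A}$-structures. The unifying observation is that all three descend from a single canonical $\widehat{A}$-structure on $\opn{Tel}(\bsym{a})$ itself, arising from the $\a$-torsion of its cohomology and made explicit by the DG-resolution techniques of Section \ref{section:widehat}. Once this $\widehat{A}$-action on the telescope is installed functorially, the Hom--tensor adjunctions become tautologically $\widehat{A}$-linear and the corollary follows. The subtle point, where the non-flatness of $A \to \widehat{A}$ would otherwise obstruct matters, is the compatibility of the telescope-derived $\widehat{A}$-structure on the middle term with the derived-completion $\widehat{A}$-structure on the outer terms; this is precisely the kind of matching that the machinery of Section \ref{section:widehat} is designed to provide.
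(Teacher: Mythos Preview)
Your overall plan---run the hom--tensor adjunction through the telescope complex---is exactly the mechanism the paper uses, but you have inverted the order of the argument and in doing so introduced a misstatement at the crucial step.

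The paper does \emph{not} first prove the $A$-linear Greenlees--May duality and then try to upgrade the $\widehat{A}$-structure. Instead it invokes Theorems~\ref{thm:wide-rgamma} and~\ref{thm:wide-llambda} directly: these give, already in $\mrm{D}(\opn{Mod}\widehat{A})$,
\[
\mrm{R}\widehat{\Gamma}_{\a}(-)\;\cong\;\widehat{A}\otimes^{\mrm{L}}_A\bigl(\opn{K}^{\vee}_{\infty}(A;\mathbf{a})\otimes_A -\bigr),
\qquad
\mrm{L}\widehat{\Lambda}_{\a}(-)\;\cong\;\mrm{R}\opn{Hom}_A\bigl(\widehat{A}\otimes_A\opn{Tel}(A;\mathbf{a}),\,-\bigr).
\]
Substituting these, all three terms in the corollary become $\mrm{R}\opn{Hom}_A$ of the \emph{same} honest complex of $\widehat{A}$-modules, namely $\widehat{A}\otimes_A\opn{Tel}(A;\mathbf{a})\cong\widehat{A}\otimes^{\mrm{L}}_A\opn{K}^{\vee}_{\infty}(A;\mathbf{a})$, shuffled by currying among the three slots. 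The adjunction isomorphisms are then tautologically $\widehat{A}$-linear because the $\widehat{A}$-action sits on that single factor throughout. No compatibility verification is needed.

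The specific gap in your write-up is the sentence ``all three descend from a single canonical $\widehat{A}$-structure on $\opn{Tel}(\bsym{a})$ itself.'' The telescope complex is a bounded complex of \emph{free $A$-modules}; it carries no $\widehat{A}$-module structure at the level of complexes, and invoking one ``from the $\a$-torsion of its cohomology'' does not give you a coherent action you can feed into hom--tensor adjunction. The object that actually carries the $\widehat{A}$-action is $\widehat{A}\otimes_A\opn{Tel}(A;\mathbf{a})$, and the content of Theorems~\ref{thm:wide-rgamma} and~\ref{thm:wide-llambda} (whose proofs are where the DG-resolution work happens, precisely to handle the possible non-flatness of $A\to\widehat{A}$) is that this complex computes both hatted functors. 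Once you cite those two theorems, the corollary is the two-line adjunction you already wrote down; the ``hard part'' you flag simply does not arise.
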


Using the results of Sections \ref{section:WPR} and \ref{section:widehat}, the main results of this paper are obtained in Section \ref{section:main}. First, in Theorem \ref{thm:LLambda-Of-Hochscild} we provide formulas which describe the effect of applying the derived completion functor to the Hochschild cohomology complex of a not necessarily adic algebra. The next major result, repeated in Corollary \ref{cor:adic-hh-is-hh} below, reduces the problem of computing the Hochschild cohomology of an adic algebra to a problem over noetherian rings:
\newpage
\begin{thm}
Let $\k$ be a commutative ring, and let $A$ be a flat noetherian $\k$-algebra. Assume $\a\subseteq A$ is an ideal, such that $A$ is $\a$-adically complete, and such that $A/\a$ is essentially of finite type over $\k$. Let $I:= \a\otimes_{\k} A + A\otimes_{\k} \a$, and set $A\widehat{\otimes}_{\k} A := \Lambda_I(A\otimes_{\k} A)$. Then for any $M \in \opn{Mod} A\otimes_{\k} A$ which is $I$-adically complete (for example, any $\a$-adically complete $A$-module, or more particularly, any finitely generated $A$-module), there is a functorial isomorphism
\[
\mrm{R}\opn{Hom}_{A\otimes_{\k} A} (A,M) \cong \mrm{R}\opn{Hom}_{A\widehat{\otimes}_{\k} A}(A,M)
\]
in $\mrm{D}(\opn{Mod} A)$, and the ring $A\widehat{\otimes}_{\k} A$ is noetherian.
\end{thm}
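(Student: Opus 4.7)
Set $B := A\otimes_\k A$ and $\hat B := A\widehat{\otimes}_\k A = \Lambda_I(B)$. Since $A$ is $\a$-adically complete and the image of $I$ in $A$ under the multiplication $B \to A$ equals $\a$, this map extends continuously to $\hat B \to A$, so $A$ is naturally a $\hat B$-algebra. The plan is to handle the Hom-isomorphism and the noetherianity of $\hat B$ separately: the former is the homologically delicate content, while the latter is more structural.

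The heart of the Hom-isomorphism is the derived base-change identity
\[
A \otimes^{\mrm{L}}_B \hat B \;\cong\; A \quad \text{in } \mrm{D}(\opn{Mod}\hat B),
\]
from which the desired isomorphism follows, functorially in $M$, by the change-of-rings adjunction $\mrm{R}\opn{Hom}_B(A,M) \cong \mrm{R}\opn{Hom}_{\hat B}(A \otimes^{\mrm{L}}_B \hat B, M)$. To establish the base-change identity I would argue in two steps. First, $A$ is derived $I$-complete as a $B$-module: the equality $I\cdot A = \a$ identifies the $I$-adic topology on $A$ with its $\a$-adic topology, and since $I$ is finitely generated and weakly proregular in $B$ by Section \ref{section:WPR}, classical $\a$-adic completeness of $A$ upgrades to $\mrm{L}\widehat{\Lambda}_I(A) \cong A$ in $\mrm{D}(\opn{Mod}\hat B)$. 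Second, the analysis of $\mrm{L}\widehat{\Lambda}_I$ in Section \ref{section:widehat}, together with the generalized Greenlees-May duality of Corollary \ref{cor:generalized-GM} and the fact that on the base ring $B$ itself derived completion recovers the classical completion $\hat B$ (again by weak proregularity), should yield a natural isomorphism $\mrm{L}\widehat{\Lambda}_I(A) \cong A \otimes^{\mrm{L}}_B \hat B$. Composing the two steps gives the base-change identity, and hence the Hom-isomorphism.

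For the noetherianity of $\hat B$, I would use the cofinality of the filtrations $\{I^n\}$ and $\{\a^n\otimes A + A \otimes \a^n\}$ on $B$ to identify $\hat B \cong \varprojlim_n A/\a^n \otimes_\k A/\a^n$, and then lift a $\k$-algebra presentation of the essentially-of-finite-type algebra $A/\a$ together with a generating set of $\a$ to a continuous surjection from a formal power series ring of the form $(S^{-1}\k[x_1,\dots,x_n])[[y_1,\dots,y_m]]$ onto $A$; tensoring the presentation and completing then exhibits $\hat B$ as a quotient of a noetherian formal power series ring, whence noetherianity follows from the Hilbert basis theorem.

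The main obstacle is the second step above, the identification $\mrm{L}\widehat{\Lambda}_I(A) \cong A \otimes^{\mrm{L}}_B \hat B$ in the non-noetherian setting. The ring $B$ is typically not noetherian (the paradigm being $\k[[t]]\otimes_\k \k[[t]]$) and the completion map $B \to \hat B$ can fail to be flat, so the classical projection-formula arguments behind Greenlees-May duality for noetherian rings do not apply directly. Weak proregularity (Section \ref{section:WPR}) together with the DG-homological techniques of Section \ref{section:widehat}, developed explicitly to accommodate this non-flatness, is precisely what makes the identification available in the generality required.
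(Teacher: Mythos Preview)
Your proposal has a genuine gap in its second step, the claimed identification $\mrm{L}\widehat{\Lambda}_I(A) \cong A \otimes^{\mrm{L}}_B \hat B$. Derived completion is \emph{not} computed as a derived tensor product with $\hat B$: Theorem~\ref{thm:wide-llambda} gives $\mrm{L}\widehat{\Lambda}_I(-) \cong \mrm{R}\opn{Hom}_B(\hat B \otimes_B \opn{Tel}(B;\mathbf{i}),-)$, an $\mrm{R}\opn{Hom}$ from a telescope-type object, and neither this formula nor Corollary~\ref{cor:generalized-GM} converts it into $(-)\otimes^{\mrm{L}}_B \hat B$. The analogue you seem to have in mind, $\mrm{L}\Lambda_{\a}(M) \cong \widehat{A}\otimes_A M$, is Proposition~\ref{prop:completion-of-fg}, and it requires both that the base ring be noetherian and that $M$ have finitely generated cohomology --- neither holds for $A$ over $B = A\otimes_\k A$. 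Without this identification your adjunction argument does not go through, and it is not clear that $A \otimes^{\mrm{L}}_B \hat B \cong A$ even holds in this generality: one would need $\mrm{Tor}^B_i(A,\hat B) = 0$ for all $i>0$ over the non-noetherian ring $B$, with $B\to\hat B$ possibly non-flat, and nothing in Sections~\ref{section:WPR}--\ref{section:widehat} supplies such a vanishing.

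The paper sidesteps this entirely. Rather than proving a base-change identity for $A$, it applies Theorem~\ref{thm:LLambda-Of-Hochscild} (which rests on Corollaries~\ref{cor:compare-adic-ext} and~\ref{cor:LLam-ofHom}) with $\widehat{A}=A$ to obtain directly
\[
\mrm{R}\opn{Hom}_B(A,\mrm{L}\Lambda_I(M)) \;\cong\; \mrm{R}\opn{Hom}_{\hat B}(A,\mrm{L}\widehat{\Lambda}_I(M)),
\]
and then invokes Lemma~\ref{lem:wide-Lambda-Of-Complete} to replace both $\mrm{L}\Lambda_I(M)$ and $\mrm{L}\widehat{\Lambda}_I(M)$ by $M$ (using that $M$ is $I$-adically complete and $\hat B$ is noetherian). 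The key move is thus a telescope-based $\mrm{R}\opn{Hom}$ comparison applied on the \emph{coefficient} side (to $M$), not a $\otimes^{\mrm{L}}$ base-change on the \emph{source} side (to $A$). For noetherianity the paper's argument is also much simpler than your presentation-and-lift: since $(A\otimes_\k A)/I \cong A/\a \otimes_\k A/\a$ is noetherian (as $A/\a$ is essentially of finite type over $\k$) and $I$ is finitely generated, $\hat B = \Lambda_I(B)$ is noetherian by the general fact recalled in Section~\ref{section:review}.
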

Using the results of the paper \cite{BF}, as a corollary of this result, we are able to prove in Corollary \ref{cor:Answer-To_BF} that Hochschild cohomology and analytic Hochschild cohomology of complete noetherian local algebras coincide, answering a question of Buchweitz and Flenner. Finally, Section \ref{section:main} ends with Theorem \ref{thm:main} which proves the result mentioned in the title of the paper. More precisely, we show: 
\begin{thm}
Let $\k$ be a commutative ring, and let $A$ be a flat noetherian $\k$-algebra such that $A\otimes_{\k} A$ is noetherian. Let $\a\subseteq A$ be an ideal, and let $M$ be a finitely generated $A\otimes_{\k} A$-module.
Then for any $n \in \mathbb{N}$, there is a functorial isomorphism
\[
\Lambda_{\a} \left(\opn{Ext}^n_{A\otimes_{\k} A}(A,M)\right) \cong \opn{Ext}^n_{\widehat{A}\otimes_{\k} \widehat{A}}(\widehat{A},\widehat{M})
\]
If moreover, either
\begin{enumerate}
\item $\k$ is a field, or 
\item $A$ is projective over $\k$, $\a$ is a maximal ideal, and $M$ is a finitely generated $A$-module,
\end{enumerate}
there is also a functorial isomorphism
\[
\Lambda_{\a} \left( \mrm{HH}^n(A/\k,M )  \right) \cong \mrm{HH}^n(\widehat{A}/\k,\widehat{M}).
\]
\end{thm}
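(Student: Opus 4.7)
Set $B := A \otimes_\k A$ and $I := \a \otimes_\k A + A \otimes_\k \a \subseteq B$; by the results of Section \ref{section:WPR}, $I$ is weakly proregular in the noetherian ring $B$, so the machinery of Section \ref{section:widehat} applies. The plan is to compute the derived $I$-adic completion $\mrm{L}\widehat{\Lambda}_I(\mrm{R}\opn{Hom}_B(A,M))$ in two independent ways and match the resulting $n$-th cohomology.

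First, since $B$ is noetherian I resolve $A$ by a complex $P_\bullet$ of finitely generated projective $B$-modules (using that $A$ is finitely generated over $B$ via the multiplication map). Then $\opn{Hom}_B(P_\bullet, M)$ is a complex of finitely generated $B$-modules with cohomology $\opn{Ext}^n_B(A,M)$. Over a noetherian ring the $I$-adic completion of finitely generated modules is exact (Artin-Rees), so $\mrm{L}\widehat{\Lambda}_I$ agrees on this complex with term-wise ordinary completion and commutes with cohomology. The crucial point is that the $B$-module structure on $\opn{Ext}^n_B(A,M)$ factors through the multiplication $B\to A$ (it is an $\opn{End}_B(A)=A$-module), so $I$ acts on it as $\a$ does; therefore its $I$-adic completion coincides with its $\a$-adic completion. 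One concludes
\[ H^n\bigl(\mrm{L}\widehat{\Lambda}_I(\mrm{R}\opn{Hom}_B(A,M))\bigr) \;\cong\; \Lambda_{\a}\bigl(\opn{Ext}^n_B(A,M)\bigr). \]

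Second, Theorem \ref{thm:LLambda-Of-Hochscild} identifies this same derived completion with the Hochschild complex of $\widehat{A}$ over the completed enveloping algebra $A\widehat{\otimes}_\k A$. Since the latter ring equals $\widehat{A}\widehat{\otimes}_\k\widehat{A}$ (the two successive completions agree at every quotient level $(A\otimes_\k A)/I^n \cong (\widehat{A}\otimes_\k\widehat{A})/\widehat{I}^n$), Corollary \ref{cor:adic-hh-is-hh} applied to the complete noetherian algebra $\widehat{A}$ rewrites it as $\mrm{R}\opn{Hom}_{\widehat{A}\otimes_\k\widehat{A}}(\widehat{A},\widehat{M})$, whose $n$-th cohomology is $\opn{Ext}^n_{\widehat{A}\otimes_\k\widehat{A}}(\widehat{A},\widehat{M})$. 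Matching this with the previous display yields the first claimed isomorphism.

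For the Hochschild cohomology form one additionally needs $\widehat{A}$ to be flat over $\k$ so that $\opn{Ext}^n_{\widehat{A}\otimes_\k\widehat{A}}(\widehat{A},-)=\mrm{HH}^n(\widehat{A}/\k,-)$. Case (1) is immediate; case (2) reduces the flatness of $\widehat{A}=\varprojlim A/\a^n$ over $\k$ to a check at each finite quotient via the projectivity of $A$ and maximality of $\a$, and one uses the identification $\widehat{M}=M\otimes_A\widehat{A}$ for finitely generated $A$-modules $M$. The hardest step in the whole argument is Theorem \ref{thm:LLambda-Of-Hochscild} itself, whose proof rests on the DG-algebraic analysis of $\mrm{L}\widehat{\Lambda}_I$ in Section \ref{section:widehat}, necessary because in its natural generality the enveloping algebra need not be noetherian and the completion map $B\to\widehat{B}^I$ need not be flat.
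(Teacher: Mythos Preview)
Your argument for the $\opn{Ext}$ isomorphism is essentially the paper's, with one sloppiness: you compute $\mrm{L}\widehat{\Lambda}_I$ of the Hochschild complex, but Theorem~\ref{thm:LLambda-Of-Hochscild} identifies $\mrm{L}\widehat{\Lambda}_{\a}$ of it. These agree because the $B$-action on $\mrm{R}\opn{Hom}_B(A,M)$ factors through the multiplication $B\to A$ (the same observation you use on cohomology), so $\opn{Hom}_B(\opn{Tel}(B;\mathbf{i}),-)\cong\opn{Hom}_A(\opn{Tel}(A;\mathbf{a}),-)$ on such complexes; but you should say this. The paper avoids the issue by working with $\mrm{L}\widehat{\Lambda}_{\a}$ throughout, noting that $\mrm{R}\opn{Hom}_B(A,M)\in\mrm{D}_{\mrm{f}}(\opn{Mod} A)$ and invoking Proposition~\ref{prop:completion-of-fg}; your route via a finite $B$-projective resolution of $A$ and termwise $I$-completion is equivalent. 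The identification with $\mrm{R}\opn{Hom}_{\widehat{A}\otimes_{\k}\widehat{A}}(\widehat{A},\widehat{M})$ is packaged in the paper as Theorem~\ref{thm:der-com-of-ext}, whose proof is exactly the chain you sketch (Theorem~\ref{thm:LLambda-Of-Hochscild}, Lemma~\ref{lem:completion-of-tensor-of-completion}, Corollary~\ref{cor:adic-hh-is-hh}).

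There is a genuine gap in case~(2) of the Hochschild statement. You assert that flatness of $\widehat{A}$ over $\k$ yields $\mrm{HH}^n(\widehat{A}/\k,-)\cong\opn{Ext}^n_{\widehat{A}\otimes_{\k}\widehat{A}}(\widehat{A},-)$, but flatness only makes the bar complex a \emph{flat} resolution, not a projective one, and that does not suffice to compute $\mrm{R}\opn{Hom}$. Indeed $\widehat{A}$ is almost never projective over $\k$ (e.g.\ $\mathbb{Z}_p$ over $\mathbb{Z}$), so your ``check at each finite quotient'' cannot produce projectivity. The paper closes this gap differently: under hypothesis~(2), the kernel $m$ of $\widehat{A}\otimes_{\k}\widehat{A}\to\widehat{A}\to\widehat{A}/\a\widehat{A}$ is a maximal ideal whose image in $\widehat{A}$ is $\a\widehat{A}$, so $\widehat{M}$ is $m$-adically complete, and then \cite[Proposition~3.1]{BF} gives the isomorphism $\mrm{HH}^n(\widehat{A}/\k,\widehat{M})\cong\opn{Ext}^n_{\widehat{A}\otimes_{\k}\widehat{A}}(\widehat{A},\widehat{M})$ directly, without any projectivity of $\widehat{A}$. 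This is precisely why the hypotheses in~(2) include maximality of $\a$ and that $M$ be finitely generated over $A$.
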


In the short and final Section \ref{section:hhomology}, we briefly discuss analog results for Hochschild homology.

\begin{warn}
Contrary to the convention in many papers in the field, unless stated otherwise, we do not assume that rings are noetherian.
\end{warn}

\section{Preliminaries on completion, torsion and Hochschild cohomology}\label{section:review}

Given a commutative ring $A$, we denote by $\opn{Mod} A$ the abelian category of $A$-modules, and by $\mrm{D}(\opn{Mod} A)$ its (unbounded) derived category. If $A$ is noetherian, we will denote by $\mrm{D}_{\mrm{f}}(\opn{Mod} A)$ the triangulated subcategory made of complexes with finitely generated cohomologies. We will freely use resolutions of unbounded complexes, following \cite{Sp}.

\subsection{Completion and torsion}

References for the material in this section are \cite{AJL1,AJL2,GM,PSY1,PSY2,Sc,Si,Ye1}. See \cite[Remark 7.14]{PSY1} for a brief discussion on the history of this material. Let $A$ be a commutative ring, and let $\a\subseteq A$ be a finitely generated ideal. The $\a$-torsion functor $\Gamma_{\a} (-) : \opn{Mod} A \to \opn{Mod} A$ is defined by
\[
\Gamma_{\a} (M) := \varinjlim \opn{Hom}_A(A/{\a}^n,M).
\]
This functor is a left exact additive functor. We denote its (total) right derived functor by $\mrm{R}\Gamma_{\a}:\mrm{D}(\opn{Mod} A) \to \mrm{D}(\opn{Mod} A)$. It is computed using K-injective resolutions. See the book \cite{BS} for a detailed study of the $\a$-torsion functor and its derived functor in the noetherian case. More important in this paper is the $\a$-adic completion functor. This functor $\Lambda_{\a}(-) :\opn{Mod} A \to \opn{Mod} A$ is defined by
\[
\Lambda_{\a} (M) := \varprojlim A/{\a}^n \otimes_A M.
\]
This functor is additive, but in general is neither left exact nor right exact (even when $A$ is noetherian, see \cite[Example 3.20]{Ye1}). It does however preserve surjections. We denote by $\mrm{L}\Lambda_{\a}:\mrm{D}(\opn{Mod} A) \to \mrm{D}(\opn{Mod} A)$ its left derived functor. By \cite[Section 1]{AJL1}, it can be computed using K-flat resolutions. Both of the functors $\Gamma_{\a}(-), \Lambda_{\a}(-)$ are idempotent (\cite[Corollary 3.6]{Ye1}). 

For any ring $A$, the $A$-module $\Lambda_{\a}(A)$ has the structure of a commutative $A$-algebra, called the completion of $A$. If $A$ is noetherian then $\Lambda_{\a}(A)$ is flat over $A$, but if $A$ is not noetherian this does not always holds. For example, if $A$ is any countable ring which is not coherent, then the completion map $A[x] \to A[[x]]$ is not flat (\cite[Tag 0AL8]{Stack}). The ring $\Lambda_{\a}(A)$ is noetherian if and only if the ring $A/\a$ is noetherian (\cite[Tag 05GH]{Stack}). If $A$ is noetherian and $M$ is a finitely generated $A$-module, then there is an isomorphism of functors $\Lambda_{\a} (M) \cong \Lambda_{\a}(A) \otimes_A M$, so in particular in that case, $\Lambda_{\a}(-)$ is exact on the category of finitely generated $A$-modules (\cite[Tag 00MB]{Stack}). We will sometime denote by $\widehat{A}$ (respectively $\widehat{M}$) the $A$-algebra (resp. $A$-module) $\Lambda_{\a}(A)$ (resp. $\Lambda_{\a}(M)$). For any ring $A$, the $A$-modules $\Gamma_{\a}(M)$ and $\Lambda_{\a}(M)$ carry naturally the structure of $\widehat{A}$-modules, and so one may view the $\a$-torsion and $\a$-completion functors as functors $\opn{Mod} A \to \opn{Mod} \widehat{A}$. Section \ref{section:widehat} below is dedicated to a study of the functors obtained from this observation.

Given a ring $A$, and an element $\a\in A$, the infinite dual Koszul complex associated to it is the complex
\[
\opn{K}^{\vee}_{\infty}(A; (a)) := 
\bigl( \cdots \to 0 \to A \to A[{a}^{-1}] \to 0 \to
\cdots \bigr) 
\]
concentrated in degrees $0,1$. If $(a_1,\dots,a_n)$ is a finite sequence of elements in $A$, then the infinite dual Koszul complex associated to it is the complex
\[
\opn{K}^{\vee}_{\infty}(A; (a_1,\dots,a_n)) := \opn{K}^{\vee}_{\infty}(A; (a_1)) \otimes_A \dots \otimes_A \opn{K}^{\vee}_{\infty}(A; (a_n)).
\]
It is a bounded complex of flat $A$-modules. Given an ideal $\a\subseteq A$, and a finite sequence $\mathbf{a}$ of elements of $A$ that generate $\a$, by \cite[Corollary 4.26]{PSY1}, there is a morphism of functors
\[
\mrm{R}\Gamma_{\a} (-) \to \opn{K}^{\vee}_{\infty}(A; \mathbf{a}) \otimes_A -.
\]
The sequence $\mathbf{a}$ is called weakly proregular if this morphism is an isomorphism of functors. This notion is actually independent of $\mathbf{a}$, and depends only on the ideal $\a$ generated by it (\cite[Lemma 3.3]{Sc}). Hence, we say a finitely generated ideal $\a$ is weakly proregular if some (or equivalently, any) finite sequence that generates it is weakly proregular. In a noetherian ring, any ideal and any finite sequence are weakly proregular, but there are examples of finitely generated (even principal) ideals in non-noetherian rings which are not weakly proregular.

Given a ring $A$ and a finite sequence $\mathbf{a}$ of elements of $A$, the infinite dual Koszul complex has an explicit free resolution, called the telescope complex, and denoted by $\opn{Tel}(A;\mathbf{a})$. This resolution is a bounded complex of countably generated free $A$-modules (\cite[Lemma 5.7]{PSY1}). In particular, if the ideal $\a$ generated by $\mathbf{a}$ is weakly proregular, then there is also an isomorphism of functors
\[
\mrm{R}\Gamma_{\a} (-) \cong\opn{Tel}(A; \mathbf{a}) \otimes_A -.
\]
Moreover, in this case, by \cite[Corollary 5.25]{PSY1}, there is also an isomorphism of functors
\[
\mrm{L}\Lambda_{\a} (-) \cong \opn{Hom}_A(\opn{Tel}(A;\mathbf{a}), -).
\]
It follows that if $A$ is a commutative ring, and $\a$ is a weakly proregular ideal, then both of the functors 
$\mrm{R}\Gamma_{\a}, \mrm{L}\Lambda_{\a}$ have finite cohomological dimension, and there is a bifunctorial isomorphism, the Greenlees-May duality:
\[
\mrm{R}\opn{Hom}_A(\mrm{R}\Gamma_{\a}(M), N) \cong \mrm{R}\opn{Hom}_A(M,\mrm{L}\Lambda_{\a} (N)) 
\]
for any $M,N \in \mrm{D}(\opn{Mod} A)$. 

Both the infinite dual Koszul complex and the telescope complex enjoy the following base change property: if $A$ is a ring, $\mathbf{a}$ is a finite sequence of elements in $A$, $A \to B$ is a ring map, and $\mathbf{b}$ is the image of $\mathbf{a}$ under this map, then there are isomorphisms
\[
\opn{K}^{\vee}_{\infty}(A; \mathbf{a}) \otimes_A B \cong \opn{K}^{\vee}_{\infty}(B; \mathbf{b}), \quad \opn{Tel}(A; \mathbf{a}) \otimes_A B \cong \opn{Tel}(B; \mathbf{b})
\]
of complexes of $B$-modules.

For any complex $M \in \mrm{D}(\opn{Mod} A)$, there are canonical maps 
\begin{equation}\label{eqn:tor}
\mrm{R}\Gamma_{\a} (M) \to M,
\end{equation}
and 
\begin{equation}\label{eqn:com}
M \to \mrm{L}\Lambda_{\a} (M).
\end{equation}
The complex $M$ is called  cohomologically $\a$-torsion (respectively cohomologically $\a$-adically complete) if the map (\ref{eqn:tor}) (resp. the map (\ref{eqn:com})) is an isomorphism. If $\a$ is weakly proregular then the functors $\mrm{R}\Gamma_{\a}$ and $\mrm{L}\Lambda_{\a}$ are idempotent (\cite[Corollary 4.30]{PSY1}, \cite[Proposition 7.10]{PSY1}), and it follows that in this case the collection of all cohomologically $\a$-torsion (resp. cohomologically $\a$-adically complete) complexes is a triangulated subcategory of $\mrm{D}(\opn{Mod} A)$ which is equal to the essential image of the functor $\mrm{R}\Gamma_{\a}$ (resp. $\mrm{L}\Lambda_{\a}$). Moreover, by \cite[Theorem 7.11]{PSY1}, in this case these categories are equivalent (the Matlis-Greenlees-May equivalence).

\subsection{Hochschild cohomology}

Let $\k$ be a commutative ring, and let $A$ be a commutative $\k$-algebra. We let
\[
A^{\otimes^n_{\k}} := \underbrace{A\otimes_{\k} \dots \otimes_{\k} A}_{n},
\]
and denote by $\mathcal{B}$ the bar resolution 
\[
\dots \to A^{\otimes^n_{\k}} \to \dots \to A^{\otimes^2_{\k}} \to A \to 0.
\]
Given an $A$-bimodule $M$, the $n$-th Hochschild cohomology module of $A$ over $\k$ with coefficients in $M$ is given by 
\[
\mrm{HH}^n(A/\k,M) := H^n\opn{Hom}_{A\otimes_{\k} A}(\mathcal{B},M).
\]
See \cite[Chapter IX]{CE}, \cite[Chapter 1]{Lo} and \cite[Chapter 9]{We} for more details on this classical construction. If $A$ is projective (respectively flat) over $\k$, then $\mathcal{B}$ is a projective (resp. flat) resolution of $A$ over the enveloping algebra $A\otimes_{\k} A$. Hence, in the projective case, the natural map
\[
\mrm{HH}^n(A/\k,M) \to \opn{Ext}^n_{A\otimes_{\k} A}(A,M)
\]
is an isomorphism. When $A$ is only flat over $\k$, but not necessarily projective, this map might fail in general to be an isomorphism. Nevertheless, the modules on the right hand side are interesting on their own, and are sometimes referred to in the literature as the derived Hochschild (or Shukla) cohomology modules of $A$ over $\k$. In this paper we will focus mostly on these modules\footnote{See however Corollary \ref{cor:Answer-To_BF}  and Theorem \ref{thm:main} where even in the possible absence of projectivity we will discuss classical Hochschild cohomology.}, and a bit more generally, on the complex $\mrm{R}\opn{Hom}_{A\otimes_{\k} A}(A,M)$. 
Somewhat imprecisely, we will refer to 
\[
\mrm{R}\opn{Hom}_{A\otimes_{\k} A}(A,M)
\]
as the Hochschild complex of $A$ with coefficients in $M$ even when $A$ is only flat over $\k$. We will however use the notation $\mrm{HH}^n(A/\k,M)$ to denote only the classical Hochschild cohomology modules.

\section{Weak proregularity and flat base change}\label{section:WPR}

Let $\k$ be a base commutative ring, and let $A,B$ be two flat $\k$-algebras. Assume that $A$ and $B$ are equipped with adic topologies, generated by finitely generated ideals $\a\subseteq A$ and $\b \subseteq B$. In that case, the tensor product $A\otimes_{\k} B$ is also naturally equipped with an adic topology. It is generated by the finitely generated ideal $\a\otimes_{\k} B + A\otimes_{\k} \b \subseteq A\otimes_{\k} B$. The aim of this section is to discuss the question: when is this ideal weakly proregular? We allow $A$ to be different from $B$, although we will only use the case $A=B$ in the rest of the paper.

Recall that a ring $\k$ is called absolutely flat (or Von Neumann regular) if every $\k$-module is flat. Over such rings, the above question is easy:

\begin{prop}\label{prop:wpr-over-abflat}
Let $\k$ be an absolutely flat ring. Let $A, B$ be two $\k$-algebras, and let $\a\subseteq A$ and $\b\subseteq B$ be weakly proregular ideals. Then the ideal 
\[
\a\otimes_{\k} B + A\otimes_{\k} \b \subseteq A\otimes_{\k} B
\]
is weakly proregular.
\end{prop}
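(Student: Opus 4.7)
My strategy is to reduce weak proregularity of the sum to that of each summand separately, and then to transport weak proregularity of $\a$ and $\b$ to the ring $A\otimes_{\k} B$ via flat base change. Since $\k$ is absolutely flat, every $\k$-module is flat; in particular $A$ and $B$ are flat over $\k$, so the two structure maps $A\to A\otimes_{\k} B$ and $B\to A\otimes_{\k} B$ are both flat. Set $C:=A\otimes_{\k} B$. Choose finite sequences $\mathbf{a}=(a_1,\dots,a_n)$ and $\mathbf{b}=(b_1,\dots,b_m)$ generating $\a$ and $\b$ respectively. Their concatenation $\mathbf{c}:=(a_1\otimes 1,\dots,a_n\otimes 1,\,1\otimes b_1,\dots,1\otimes b_m)$ generates $\a\otimes_{\k} B+A\otimes_{\k} \b$, so, since weak proregularity of a sequence depends only on the generated ideal, it suffices to show $\mathbf{c}$ is weakly proregular in $C$.

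Next I would prove a flat base change lemma: if $A\to C$ is flat and $\mathbf{a}\subseteq A$ is a weakly proregular finite sequence, then its image $\mathbf{a}'$ in $C$ is weakly proregular. For $M\in \mrm{D}(\opn{Mod} C)$, a K-injective $C$-resolution of $M$ remains K-injective over $A$ by flatness, so $\mrm{R}\Gamma_{\a}(M)\cong \mrm{R}\Gamma_{\a C}(M)$. Combined with the Koszul base change isomorphism from the excerpt,
\[
\opn{K}^{\vee}_{\infty}(A;\mathbf{a})\otimes_A M \cong \opn{K}^{\vee}_{\infty}(C;\mathbf{a}')\otimes_C M,
\]
the weak proregularity isomorphism for $\mathbf{a}$ in $A$ transports to the analogous one for $\mathbf{a}'$ in $C$. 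Applying this to the two flat maps $A\to C$ and $B\to C$ shows that the subsequences $(\mathbf{a}\otimes 1)$ and $(1\otimes\mathbf{b})$ of $\mathbf{c}$ are weakly proregular in $C$.

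Finally I would prove a concatenation lemma: if two finite sequences $\mathbf{x},\mathbf{y}$ in a ring $C$ are each weakly proregular, then $(\mathbf{x},\mathbf{y})$ is weakly proregular. Since $\Gamma_{(\mathbf{x},\mathbf{y})}=\Gamma_{(\mathbf{x})}\circ\Gamma_{(\mathbf{y})}$ at the underived level, and $\Gamma_{(\mathbf{y})}$ sends K-injectives to K-injectives (a standard verification), one gets $\mrm{R}\Gamma_{(\mathbf{x},\mathbf{y})}\cong \mrm{R}\Gamma_{(\mathbf{x})}\circ \mrm{R}\Gamma_{(\mathbf{y})}$. Using weak proregularity of $\mathbf{x}$ and $\mathbf{y}$, together with flatness of $\opn{K}^{\vee}_{\infty}(C;\mathbf{x})$ and the multiplicativity $\opn{K}^{\vee}_{\infty}(C;(\mathbf{x},\mathbf{y}))\cong \opn{K}^{\vee}_{\infty}(C;\mathbf{x})\otimes_C \opn{K}^{\vee}_{\infty}(C;\mathbf{y})$, one computes
\[
\mrm{R}\Gamma_{(\mathbf{x},\mathbf{y})}(M)\cong \opn{K}^{\vee}_{\infty}(C;\mathbf{x})\otimes_C \opn{K}^{\vee}_{\infty}(C;\mathbf{y})\otimes_C M \cong \opn{K}^{\vee}_{\infty}(C;(\mathbf{x},\mathbf{y}))\otimes_C M,
\]
which is exactly weak proregularity of $(\mathbf{x},\mathbf{y})$. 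Combining with the previous step for $\mathbf{x}=\mathbf{a}\otimes 1$ and $\mathbf{y}=1\otimes \mathbf{b}$ finishes the argument. The main technical point is justifying the isomorphism $\mrm{R}\Gamma_{(\mathbf{x})}\circ \mrm{R}\Gamma_{(\mathbf{y})}\cong \mrm{R}\Gamma_{(\mathbf{x},\mathbf{y})}$ and the flat base change for $\mrm{R}\Gamma_\a$; both are standard and would be cited from the references listed in Section~\ref{section:review} rather than reproved.
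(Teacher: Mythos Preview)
Your flat base change step is correct and is exactly Remark~\ref{remark-wpr-flat} in the paper. The genuine gap is in your concatenation lemma. The claim that ``$\Gamma_{(\mathbf{y})}$ sends K-injectives to K-injectives (a standard verification)'' is \emph{false} over a non-noetherian ring: already for a single injective module $I$, the torsion submodule $\Gamma_{\b}(I)$ need not be injective unless the ambient ring is noetherian. This is precisely why the paper invests real effort in Lemma~\ref{lem-tor-of-inj-is-flasque} and Proposition~\ref{wpr-prop} to obtain only a weaker $\widehat{\b}$-flasqueness conclusion, and only under the extra hypothesis that the completion is noetherian. Without that preservation, your identification $\mrm{R}\Gamma_{(\mathbf{x},\mathbf{y})}\cong \mrm{R}\Gamma_{(\mathbf{x})}\circ\mrm{R}\Gamma_{(\mathbf{y})}$ is unjustified, and the displayed chain of isomorphisms breaks at the very first step. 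Here $C=A\otimes_{\k}B$ is typically far from noetherian, so you cannot retreat to the noetherian case.

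The paper's proof (via \cite[Example~4.35]{PSY1}) uses the absolute flatness of $\k$ in an essential way that your outline never invokes. Concretely, the Koszul complex for the concatenated sequence factors over $\k$ as $K(\mathbf{a}^n;A)\otimes_{\k} K(\mathbf{b}^n;B)$, and since every $\k$-module is flat the K\"unneth formula gives
\[
H_i\bigl(K(\mathbf{a}^n;A)\otimes_{\k} K(\mathbf{b}^n;B)\bigr)\;\cong\;\bigoplus_{p+q=i} H_p\bigl(K(\mathbf{a}^n;A)\bigr)\otimes_{\k} H_q\bigl(K(\mathbf{b}^n;B)\bigr).
\]
For $i>0$ each summand has $p>0$ or $q>0$, so weak proregularity of $\mathbf{a}$ or of $\mathbf{b}$ makes the corresponding factor pro-zero, and hence the entire inverse system is pro-zero. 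This is where the hypothesis on $\k$ actually does work; in your argument it is used only to get flatness of $A\to C$ and $B\to C$, which by itself is not enough to carry the concatenation step.
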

\begin{proof}
In case where $\k$ is a field, and $A$ and $B$ are noetherian and complete with respect to the adic topology, this is shown in \cite[Example 4.35]{PSY1}, and the proof there remains true under the above assumptions.
\end{proof}

\begin{rem}\label{remark-wpr-flat}
Assume $A$ is a ring, $\a\subseteq A$ is a weakly proregular ideal, $B$ is a flat $A$-algebra, and $\b = \a\cdot B$. Then by \cite[Example 3.0(B)]{AJL1}, the ideal $\b$ is also weakly proregular.
\end{rem}

Recall that if $A$ is a noetherian ring, $\a\subseteq A$ an ideal, and if $I$ is an injective $A$-module, then $\Gamma_{\a}(I)$ is also an injective $A$-module. (For example by \cite[Lemma 3.2]{Ha}). We now state and prove a weaker form of this fact in the case when $A$ is not necessarily noetherian, but $\Lambda_{\a}(A)$ is.

If $A$ is a ring, $\a\subseteq A$ a finitely generated ideal, and $M$ is an $A$-module, then $M$ is called $\a$-flasque if for each $k>0$, we have that $\mrm{H}_{\a}^k(M) = 0 $, where $\mrm{H}_{\a}^k(M):= \mrm{H}^k(\mrm{R}\Gamma_{\a}(M))$. Any injective module is $\a$-flasque. If $M$ is $\a$-flasque, then the canonical morphism $\Gamma_{\a} (M) \to \mrm{R}\Gamma_{\a} (M)$ is an isomorphism. By \cite[Theorem 3.4.10]{BS}, the direct limit of $\a$-flasque modules is $\a$-flasque.

\begin{lem}\label{lem-tor-of-inj-is-flasque}
Let $A$ be a ring, and let $\a, \b\subseteq A$ be two finitely generated ideals. Suppose that the ring $\widehat{A} = \Lambda_{\mfrak{a}}(A)$ is noetherian. Let $\widehat{\mfrak{b}} = \mfrak{b}\widehat{A}$. Then for any injective $A$-module $I$, the $\widehat{A}$-module $\widehat{\Gamma}_{\mfrak{a}} I$ is $\widehat{\mfrak{b}}$-flasque.
\end{lem}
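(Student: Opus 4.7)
The plan is to reduce the computation of $\mrm{R}\Gamma_{\widehat{\b}}$ applied to $\widehat{\Gamma}_{\a} I$ to a computation over the noetherian quotient ring $A/\a^n$, where the classical fact that torsion preserves injectivity is available.

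First, I would write $\widehat{\Gamma}_{\a} I = \varinjlim_n J_n$, where $J_n := \opn{Hom}_A(A/\a^n,I)$. Tensor-Hom adjunction gives $\opn{Hom}_{A/\a^n}(M,J_n) \cong \opn{Hom}_A(M,I)$ for any $A/\a^n$-module $M$; since $I$ is $A$-injective, this is exact in $M$, so $J_n$ is injective over $A/\a^n$. Now $A/\a^n$ is a quotient of the noetherian ring $\widehat{A} = \varprojlim A/\a^n$, and so is itself noetherian. By \cite[Lemma 3.2]{Ha}, for every ideal $\c \subseteq A/\a^n$ the module $\Gamma_{\c}(J_n)$ is injective over $A/\a^n$, and in particular $J_n$ is $\c$-flasque over $A/\a^n$.

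Second, I would transfer this flasqueness from the quotient $A/\a^n$ to the ideal $\widehat{\b}$ over $\widehat{A}$ using the infinite dual Koszul complex. Fix a finite generating sequence $\mathbf{b}$ of $\b$, and denote by $\b_n$ its image in $A/\a^n$. Since $\widehat{A}$ is noetherian, $\widehat{\b} = \b\widehat{A}$ is weakly proregular, hence
\[
\mrm{R}\Gamma_{\widehat{\b}}(J_n) \cong \opn{K}^{\vee}_{\infty}(\widehat{A};\mathbf{b}) \otimes_{\widehat{A}} J_n.
\]
Because $J_n$ is annihilated by $\ker(\widehat{A} \to A/\a^n)$, the base-change property of the infinite dual Koszul complex recalled in Section~\ref{section:review} identifies the right-hand side with $\opn{K}^{\vee}_{\infty}(A/\a^n;\mathbf{b}) \otimes_{A/\a^n} J_n \cong \mrm{R}\Gamma_{\b_n}(J_n)$, which by the first paragraph is concentrated in degree $0$. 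Thus each $J_n$ is $\widehat{\b}$-flasque as an $\widehat{A}$-module.

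Finally, the direct limit of $\widehat{\b}$-flasque $\widehat{A}$-modules is again $\widehat{\b}$-flasque (\cite[Theorem 3.4.10]{BS}, as recalled in the preliminaries), so $\widehat{\Gamma}_{\a} I = \varinjlim_n J_n$ is $\widehat{\b}$-flasque. The main delicate point is the base-change step: without flatness of $A \to \widehat{A}$ one cannot freely move derived torsion between the two rings, and the argument succeeds only because each $J_n$ is supported set-theoretically on $A/\a^n$ and the infinite dual Koszul complex, being a bounded complex of flat modules, exhibits the scalar change as a formal (rather than derived) base change.
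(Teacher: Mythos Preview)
Your proof is correct and follows essentially the same approach as the paper's: write $\widehat{\Gamma}_{\a} I$ as the direct limit of the injective $A/\a^n$-modules $J_n = \opn{Hom}_A(A/\a^n,I)$, use weak proregularity of $\widehat{\b}$ and base change of the infinite dual Koszul complex to reduce $H^k_{\widehat{\b}}(J_n)$ to $H^k_{\b_n}(J_n)$ over the noetherian ring $A/\a^n$, and conclude via the stability of flasqueness under direct limits. The only cosmetic difference is that the paper writes $\widehat{\mathbf{b}}$ for the image of $\mathbf{b}$ in $\widehat{A}$ rather than reusing the symbol $\mathbf{b}$.
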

\begin{proof}
Let $A_j = A/\mfrak{a}^{j+1}$. Since $\mfrak{a}$ is finitely generated, there is an isomorphism $A_j \cong \widehat{A}/(\mfrak{a}\widehat{A})^{j+1}$. Note that by assumption, $A_j$ is noetherian. Let $\widehat{\mfrak{b}}_j$ be the image of $\widehat{\mfrak{b}}$ in $A_j$. Let $I_j = \opn{Hom}_A(A_j,I)$. Then $\widehat{\Gamma}_{\mfrak{a}} I = \varinjlim I_j$, so it is enough to show that $I_j$ is $\widehat{\mfrak{b}}$-flasque. Note also that $I_j$ is an injective $A_j$-module. Let $k>0$,  
let $\widehat{\mathbf{b}}$ be a finite sequence generating $\widehat{\mfrak{b}}$, and let $\mathbf{b}_j$ be its image in $A_j$.
Since $\widehat{A}$ is noetherian, $\widehat{\mfrak{b}}$ is weakly proregular, so that
\[
H^k_{\widehat{\mfrak{b}}} (I_j) \cong H^k(\opn{K}^{\vee}_{\infty}(\widehat{A};\widehat{\mathbf{b}}) \otimes_{\widehat{A}} I_j) \cong
H^k(\opn{K}^{\vee}_{\infty}(A_j;\mathbf{b}_j)\otimes_{A_j} I_j) \cong
H^k_{\widehat{\mfrak{b}}_j}(I_j)
\]
where the last isomorphism follows from the fact that $A_j$ is noetherian, so that $\widehat{\mfrak{b}}_j$ is weakly proregular. Since $I_j$ is injective over $A_j$, it follows that $H^k_{\widehat{\mfrak{b}}_j}(I_j) = 0$ for all $k>0$, which proves the claim. 
\end{proof}

\begin{prop}\label{wpr-prop}
Let $A$ be a commutative ring, and let $\a\subseteq A$ be a weakly proregular ideal such that $\Lambda_{\a}(A)$ is noetherian. Let $\b\subseteq A$ be an ideal containing $\a$. Then $\b$ is also weakly proregular.
\end{prop}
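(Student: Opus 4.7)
The plan is to verify directly the defining criterion of weak proregularity on an arbitrary injective $A$-module $I$: I will choose a convenient finite generating sequence for $\b$ and show that the associated infinite dual Koszul complex, tensored with $I$, is concentrated in degree $0$ and equal to $\Gamma_\b(I)$. First observe that $\b$ is automatically finitely generated: since $\Lambda_\a(A)$ is noetherian, the ring $A/\a$ is noetherian, hence $\b/\a$ is finitely generated; lifting its generators and combining them with a finite generating sequence $\mathbf{a}$ of $\a$ produces a finite sequence $\mathbf{c}=(\mathbf{a},\mathbf{b}')$ generating $\b$, where $(\mathbf{b}')$ denotes the lifted elements.

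Next, I compute
\[
\opn{K}^{\vee}_{\infty}(A;\mathbf{c})\otimes_A I \;=\; \opn{K}^{\vee}_{\infty}(A;\mathbf{b}')\otimes_A \bigl(\opn{K}^{\vee}_{\infty}(A;\mathbf{a})\otimes_A I\bigr).
\]
Weak proregularity of $\a$, together with injectivity of $I$, identifies the inner tensor product (up to quasi-isomorphism) with $\mrm{R}\Gamma_\a(I)\simeq \Gamma_\a(I)=\widehat{\Gamma}_\a I$, viewed as a module concentrated in degree $0$. Since $\opn{K}^{\vee}_{\infty}(A;\mathbf{b}')$ is a bounded complex of flat $A$-modules, tensoring preserves this quasi-isomorphism, giving
\[
\opn{K}^{\vee}_{\infty}(A;\mathbf{c})\otimes_A I \;\simeq\; \opn{K}^{\vee}_{\infty}(A;\mathbf{b}')\otimes_A \widehat{\Gamma}_\a I.
\]
Using the natural $\widehat{A}$-module structure on $\widehat{\Gamma}_\a I$ and the base change property of the infinite dual Koszul complex recalled in Section \ref{section:review}, this equals $\opn{K}^{\vee}_{\infty}(\widehat{A};\widehat{\mathbf{b}}')\otimes_{\widehat{A}} \widehat{\Gamma}_\a I$, where $\widehat{\mathbf{b}}'$ is the image of $\mathbf{b}'$ in $\widehat{A}$.

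To finish, since $\widehat{A}$ is noetherian the ideal $\widehat{\b}':=\widehat{\mathbf{b}}'\widehat{A}$ is weakly proregular, so the last complex computes $\mrm{R}\Gamma_{\widehat{\b}'}(\widehat{\Gamma}_\a I)$. Lemma \ref{lem-tor-of-inj-is-flasque}, applied to the pair of ideals $\a$ and $\b'$, tells us that $\widehat{\Gamma}_\a I$ is $\widehat{\b}'$-flasque, so this derived object reduces to $\Gamma_{\widehat{\b}'}(\widehat{\Gamma}_\a I)$ in degree $0$. Finally, since an element of $\widehat{\Gamma}_\a I$ is killed by a power of $\a$, being additionally annihilated by a power of $\widehat{\mathbf{b}}'$ is equivalent to being killed by a power of $\b=\a+(\mathbf{b}')$; thus $\Gamma_{\widehat{\b}'}(\widehat{\Gamma}_\a I)=\Gamma_\b(I)$, and the whole calculation recovers exactly what $\mrm{R}\Gamma_\b(I)$ should be, yielding weak proregularity of $\mathbf{c}$ and hence of $\b$. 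The step I expect to require the most care is the bookkeeping of the $A$- versus $\widehat{A}$-module structures through the base change and the verification that the quasi-isomorphisms assembled above realize the canonical natural transformation $\mrm{R}\Gamma_\b \to \opn{K}^{\vee}_{\infty}(A;\mathbf{c})\otimes_A -$, rather than some other map.
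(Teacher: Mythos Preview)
Your argument is essentially the same as the paper's: both factor the Koszul/telescope complex of a generating sequence of $\b$ as a tensor product over a sequence for $\a$ and a residual sequence, use weak proregularity of $\a$ on an injective $I$ to reduce to $\Gamma_\a I$, base-change to $\widehat{A}$, and then invoke Lemma~\ref{lem-tor-of-inj-is-flasque} together with noetherianity of $\widehat{A}$ to conclude that the remaining complex is concentrated in degree $0$. The paper works with the telescope complex and a full generating sequence $\mathbf{b}$ of $\b$ (using the homotopy equivalence $\opn{Tel}(A;(\mathbf{a},\mathbf{b}))\simeq\opn{Tel}(A;\mathbf{b})$), whereas you work with $\opn{K}^{\vee}_\infty$ and a sequence $(\mathbf{a},\mathbf{b}')$ built from lifts of generators of $\b/\a$; these are cosmetic variants. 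Your explicit verification that $\b$ is finitely generated is a nice touch the paper glosses over. Regarding your final concern about whether the assembled quasi-isomorphisms realize the canonical map $\mrm{R}\Gamma_\b\to\opn{K}^{\vee}_\infty(A;\mathbf{c})\otimes_A-$: the paper sidesteps this entirely by citing Schenzel's criterion \cite[Theorem 1.1]{Sc}, which says weak proregularity is equivalent to the vanishing of $H^k(\opn{Tel}(A;\mathbf{c})\otimes_A I)$ for all $k\neq 0$ and all injective $I$; since you have already established that the complex is concentrated in degree $0$, you may do the same and drop the identification of $H^0$.
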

\begin{proof}
We keep the notations of Lemma \ref{lem-tor-of-inj-is-flasque}. It is clear that $A/\b$ is noetherian.
Let $\mathbf{a}$ be a finite sequence generating $\mfrak{a}$, and let $\mathbf{b}$ be a finite sequence generating $\mfrak{b}$. Let $I$ be an injective $A$-module. By \cite[Theorem 1.1]{Sc}, It is enough to show that
\[
H^k(\opn{Tel}(A;\mathbf{b}) \otimes_A I) = 0
\]
for all $k \ne 0$. 
\par
Since $\mfrak{a}\subseteq\mfrak{b}$, the ideal generated by the concatenated sequence $(\mathbf{a},\mathbf{b})$ is equal to the ideal generated by $\mathbf{b}$, so there is a homotopy equivalence
$\opn{Tel}(A;(\mathbf{a},\mathbf{b})) \cong \opn{Tel}(A;\mathbf{b})$.
Hence, there is an isomorphism in $\mrm{D}(\opn{Mod} A)$
\[
\opn{Tel}(A;\mathbf{b}) \otimes_A I \cong
\opn{Tel}(A;(\mathbf{a},\mathbf{b})) \otimes_A I \cong
\opn{Tel}(A;\mathbf{b}) \otimes_A \opn{Tel}(A;\mathbf{a}) \otimes_A I 
\]
Since $\mathbf{a}$ is a weakly proregular sequence, $I$ is an injective $A$-module, and $\opn{Tel}(A;\mathbf{b})$ is a bounded complex of flat modules, the latter is isomorphic in $\mrm{D}(\opn{Mod} A)$ to
\[
\opn{Tel}(A;\mathbf{b}) \otimes_A \Gamma_{\mfrak{a}} I
\]
Thus, it is enough to show that all the cohomologies (except the zeroth) of the complex of $A$-modules $\opn{Tel}(A;\mathbf{b}) \otimes_A \Gamma_{\mfrak{a}} I$ vanish. Let 
\[
\opn{Rest}_{\widehat{A}/A}:\mrm{D}(\opn{Mod} \widehat{A}) \to \mrm{D}(\opn{Mod} A)
\]
be the forgetful functor, and let  $\widehat{\mathbf{b}}$ be the image of the sequence $\mathbf{b}$ in $\widehat{A}$. Consider the complex 
\[
\opn{Tel}(\widehat{A};\widehat{\mathbf{b}}) \otimes_{\widehat{A}} \widehat{\Gamma}_{\a} I \in \mrm{D}(\opn{Mod} \widehat{A}).
\]
We claim that 
\begin{equation}\label{eqn:Q-calc}
\opn{Rest}_{\widehat{A}/A}(\opn{Tel}(\widehat{A};\widehat{\mathbf{b}}) \otimes_{\widehat{A}} \widehat{\Gamma}_{\a} I ) =  \opn{Tel}(A;\mathbf{b}) \otimes_A \Gamma_{\mfrak{a}} I.
\end{equation}
Indeed, by the base change property of the telescope complex, we have an isomorphism of complexes in $\mrm{D}(\opn{Mod} \widehat{A})$:
\[
\opn{Tel}(\widehat{A};\widehat{\mathbf{b}}) \otimes_{\widehat{A}} \widehat{\Gamma}_{\a} I \cong \opn{Tel}(A;\mathbf{b}) \otimes_A \widehat{\Gamma}_{\a} I.
\]
So using the fact that 
\[
\opn{Rest}_{\widehat{A}/A}(\widehat{\Gamma}_{\a}I) = \Gamma_{\a} I
\]
we obtain equation (\ref{eqn:Q-calc}). Since for a complex $M \in \mrm{D}(\opn{Mod} \widehat{A})$, we have that 
\[
H^k(M) = 0 \quad\mbox{if and only if}\quad H^k(\opn{Rest}_{\widehat{A}/A}(M)) = 0,
\]
it is thus enough to show that $H^k(\opn{Tel}(\widehat{A};\widehat{\mathbf{b}}) \otimes_{\widehat{A}} \widehat{\Gamma}_{\a} I) = 0$ for all $k \ne 0$. By weak proregularity of the sequence $\widehat{\mathbf{b}}$, there is an isomorphism in $\mrm{D}(\opn{Mod}\widehat{A})$:
\[
\opn{Tel}(\widehat{A};\widehat{\mathbf{b}}) \otimes_{\widehat{A}} \widehat{\Gamma}_{\mfrak{a}}I \cong
\mrm{R}\Gamma_{\widehat{\mfrak{b}}} \widehat{\Gamma}_{\mfrak{a}}I
\]
By Lemma \ref{lem-tor-of-inj-is-flasque}, this is isomorphic in $\mrm{D}(\opn{Mod}\widehat{A})$ to
\[
\Gamma_{\widehat{\mfrak{b}}} \widehat{\Gamma}_{\mfrak{a}}I
\]
Since this complex is clearly concentrated in degree zero, it follows that all of its cohomologies except the zeroth vanish, which proves the result.
\end{proof}

Here is the main result of this section:
\begin{thm}\label{thm:formally-finite-type-wpr}
Let $\k$ be a commutative ring, let $A$ be a flat noetherian $\k$-algebra, and let $\a\subseteq A$ be an ideal such that $A/\a$ is essentially of finite type over $\k$. Let $B$ be a flat noetherian $\k$-algebra, and let $\b\subseteq B$ be an ideal. Then the ideal 
\[
I:= \a\otimes_{\k} B + A\otimes_{\k} \b\subseteq A\otimes_{\k} B
\] 
is weakly proregular.
\end{thm}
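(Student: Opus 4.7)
The plan is to apply Proposition \ref{wpr-prop} with a cleverly chosen ideal $J\subseteq I$. Specifically, I would take $J := \a\otimes_{\k} B$ and try to verify two things: (i) $J$ is weakly proregular in $R := A\otimes_{\k} B$, and (ii) $\Lambda_J(R)$ is noetherian. Once these are established, since $J\subseteq I$, Proposition \ref{wpr-prop} immediately gives that $I$ is weakly proregular.

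Step (i) is easy: because $B$ is flat over $\k$, the ring $R = A\otimes_{\k} B$ is flat as an $A$-algebra, and $\a$ is weakly proregular in the noetherian ring $A$, so by Remark \ref{remark-wpr-flat} the extension $\a\cdot R = \a\otimes_{\k} B = J$ is weakly proregular in $R$.

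Step (ii) is the main technical point. First, I would show by induction on $n$ that $A/\a^n$ is essentially of finite type over $\k$ for every $n\geq 1$. For this, note that $A/\a^{n+1}$ is a finitely generated $A/\a^n$-module (generated by $1$ together with lifts of generators of the finitely generated $A/\a$-module $\a^n/\a^{n+1}$), hence a finite-type $A/\a^n$-algebra; and finite type over essentially finite type is essentially finite type (clear generators/denominators). Consequently, writing $A/\a^n = T^{-1}C$ for some finite-type $\k$-algebra $C$, we get
\[
(A/\a^n)\otimes_{\k} B \;=\; T^{-1}(C\otimes_{\k} B),
\]
which is noetherian by Hilbert's basis theorem (since $B$ is a noetherian $\k$-algebra) followed by localization. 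Now, $J$ is finitely generated, so $\Lambda_J(R)$ is $J\Lambda_J(R)$-adically complete, and
\[
\Lambda_J(R)/J\Lambda_J(R) \;=\; R/J \;=\; (A/\a)\otimes_{\k} B
\]
is noetherian by what we just showed. By the standard fact that a ring which is complete with respect to a finitely generated ideal $J$ with noetherian quotient $R/J$ is itself noetherian, we conclude that $\Lambda_J(R)$ is noetherian.

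The main obstacle I anticipate is verifying cleanly that $A/\a^n$ remains essentially of finite type over $\k$; this is the crux of how the essentially-of-finite-type hypothesis on $A/\a$ gets promoted to a statement about the full filtration. Everything else is a direct combination of Remark \ref{remark-wpr-flat} and Proposition \ref{wpr-prop} together with the classical ``complete + finitely generated ideal + noetherian quotient $\Rightarrow$ noetherian'' result already invoked in the discussion preceding Lemma \ref{lem-tor-of-inj-is-flasque}.
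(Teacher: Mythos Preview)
Your approach is exactly the paper's: set $J=\a\otimes_{\k} B$, observe it is weakly proregular via Remark~\ref{remark-wpr-flat}, show $\Lambda_J(R)$ is noetherian because $R/J\cong (A/\a)\otimes_{\k} B$ is noetherian, and then invoke Proposition~\ref{wpr-prop}. So the proposal is correct and matches the paper.

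One remark: your induction on $n$ is both unnecessary and, as written, incorrect. The natural surjection runs $A/\a^{n+1}\twoheadrightarrow A/\a^n$, not the other way, so $A/\a^{n+1}$ carries no $A/\a^n$-module (let alone algebra) structure, and the sentence ``$A/\a^{n+1}$ is a finitely generated $A/\a^n$-module'' is meaningless. This does no damage, because in the end you only use the case $n=1$, which is the hypothesis itself; the paper simply skips the detour and passes directly from ``$(A/\a)\otimes_{\k} B$ is noetherian'' to ``$\Lambda_J(R)$ is noetherian'' via the fact recalled in Section~\ref{section:review}. Delete the induction paragraph and your proof is verbatim the paper's.
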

\begin{proof}
According to Remark \ref{remark-wpr-flat}, the ideal $I_1 := \a\otimes_{\k} B \subseteq A\otimes_{\k} B$ is weakly proregular. Since $B$ is flat over $\k$, we have that $(A\otimes_{\k} B)/I_1 \cong A/\a \otimes_{\k} B$, and as
$A/\a$ is essentially of finite type over $\k$, it follows that $(A\otimes_{\k} B)/I_1$ is noetherian. Hence,  $\Lambda_{I_1} (A\otimes_{\k} B)$ is also noetherian. Since $I_1 \subseteq I$, the result follows from Proposition \ref{wpr-prop}.
\end{proof}

\begin{rem}
The assumption that $A$ is noetherian in the above result can be relaxed: it is enough to assume that $\a$ is weakly proregular. It is an open problem to the author if the above result remains true without the assumption that $A/\a$ is essentially of finite type over $\k$ (as in Proposition \ref{prop:wpr-over-abflat}).
\end{rem}

\begin{rem}
Let $\k$ be a commutative ring, and let $A,B$ be two commutative noetherian $\k$-algebras which are adically complete with respect to ideals $\a\subseteq A, \b\subseteq B$. In this situation, Grothendieck \cite[Section 10.7]{EGA} defined the fiber product of the two affine formal schemes $\opn{Spf} A, \opn{Spf} B$ over $\k$ to be the formal spectrum of the ring $\Lambda_{\a\otimes_{\k} B + A\otimes_{\k} \b}(A\otimes_{\k} B)$.

Now, we switch to the point of view of derived algebraic geometry, and assume that $A$ and $B$ are flat over $\k$. Forgetting the adic structure on $A,B$, the flatness assumption ensures that in this situation, the usual fiber product $\opn{Spec}(A\otimes_{\k} B)$ of the schemes $\opn{Spec}(A)$ and $\opn{Spec}(B)$ coincides with their derived fiber product. Returning to the adic situation, Lurie defined in \cite[Section 4.2]{Lu} a notion of a derived completion of an ($E_{\infty}$) ring, and showed (\cite[Section 4.3]{Lu}) that if the ring is noetherian then its derived completion coincides with its ordinary completion. Recently, using our \cite[Theorem 4.2]{PSY3}, it was shown in \cite[Proposition 5.4]{BGW}, that if $R$ is any commutative ring, and if $I\subseteq R$ is a weakly proregular ideal, then the derived $I$-completion of $R$ coincides with its ordinary $I$-adic completion. Hence, the results of this section implies the following:  
\end{rem}

\begin{cor}\label{cor:derived-fiber-of-formal}
Let $\k$ be a commutative ring, and let $A,B$ be noetherian flat $\k$-algebras which are adically complete with respect to ideals $\a\subseteq A, \b\subseteq B$. Assume further that either $\k$ is an absolutely flat ring (e.g, a field) or that $A/\a$ is essentially of finite type over $\k$.
Then the derived fiber product of the formal schemes $\opn{Spf} A, \opn{Spf} B$ over $\k$ is equal to the formal spectrum of $\Lambda_{\a\otimes_{\k} B + A\otimes_{\k} \b}(A\otimes_{\k} B)$.
\end{cor}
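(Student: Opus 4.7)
The plan is to unwind the definition of the derived fiber product in the adic setting and reduce everything to two inputs already established above: weak proregularity of the ideal $I := \a\otimes_{\k} B + A\otimes_{\k} \b$ inside $A\otimes_{\k} B$, and the comparison between derived and ordinary completion for weakly proregular ideals.

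First I would observe that, since $A$ and $B$ are both flat over $\k$, the derived tensor product $A\otimes^{\mrm{L}}_{\k} B$ agrees with the ordinary tensor product $A\otimes_{\k} B$, concentrated in degree zero. Consequently the ``underlying'' derived fiber product of $\opn{Spec}(A)$ and $\opn{Spec}(B)$ over $\opn{Spec}(\k)$ coincides with the classical $\opn{Spec}(A\otimes_{\k} B)$, as already noted in the preceding remark. Lurie's recipe for the derived fiber product of formal spectra then instructs us to apply his derived $I$-completion functor to the $E_\infty$-$\k$-algebra $A\otimes_{\k} B$, and to take the formal spectrum of the resulting completion.

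Next, I would invoke Theorem \ref{thm:formally-finite-type-wpr} (in the case $A/\a$ is essentially of finite type over $\k$) or Proposition \ref{prop:wpr-over-abflat} (in the absolutely flat case) to conclude that the finitely generated ideal $I\subseteq A\otimes_{\k} B$ is weakly proregular. Since $A\otimes_{\k} B$ is an honest ring concentrated in degree zero, \cite[Proposition 5.4]{BGW} (which relies on \cite[Theorem 4.2]{PSY3}) then identifies Lurie's derived $I$-completion of $A\otimes_{\k} B$ with its classical $I$-adic completion $\Lambda_I(A\otimes_{\k} B)$. Taking formal spectra yields the stated description of the derived fiber product.

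The only real point to verify is that Lurie's completion functor, when restricted to ordinary commutative rings viewed as $E_\infty$-algebras via an Eilenberg--MacLane embedding, is exactly the object to which \cite[Proposition 5.4]{BGW} applies; this compatibility is precisely what is recorded in the latter reference, so no additional work is required. Once that bookkeeping is in place, the corollary follows formally from the two ingredients above, and I expect the most delicate step to be merely the clean articulation of this dictionary between the derived algebraic geometry formalism and the ring-theoretic formalism used in the body of the paper.
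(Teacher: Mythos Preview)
Your proposal is correct and follows essentially the same route as the paper: the corollary is stated without a separate proof, but the preceding remark already records exactly the argument you give---flatness of $A,B$ over $\k$ makes the derived tensor product classical, the results of this section (Proposition~\ref{prop:wpr-over-abflat} or Theorem~\ref{thm:formally-finite-type-wpr}) supply weak proregularity of $I$, and \cite[Proposition 5.4]{BGW} then identifies Lurie's derived completion with the ordinary $I$-adic completion. There is nothing to add.
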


\section{The functors $\mrm{R}\widehat{\Gamma}_{\a},\mrm{L}\widehat{\Lambda}_{\a}$}\label{section:widehat}

Let $A$ be a commutative ring, let $\a\subseteq A$ be a finitely generated ideal, and let $\widehat{A}$ be the $\a$-adic completion of $A$. For any $A$-module $M$, the $A$-modules $\Gamma_{\a}(M)$ and $\Lambda_{\a}(M)$ carry naturally $\widehat{A}$-module structures, and one obtain additive functors $\widehat{\Gamma}_{\a}, \widehat{\Lambda}_{\a}: \opn{Mod} A \to \opn{Mod} \widehat{A}$, defined by the same formulas as $\Gamma_{\a}$ and $\Lambda_{\a}$. These functors have derived functors $\mrm{R}\widehat{\Gamma}_{\a}, \mrm{L}\widehat{\Lambda}_{\a}: \mrm{D}(\opn{Mod} A) \to \mrm{D}(\opn{Mod} \widehat{A})$ calculated using K-injective and K-flat resolutions respectively. This section is dedicated to a study of these functors. 

Keeping an eye towards the main goal of this text, we must avoid assuming that $A$ is noetherian. Hence, we do not know if the completion map $A\to \widehat{A}$ is flat. We overcome this issue by using DG-algebras. DG-algebras will be assumed to be (graded-)commutative. We refer the reader to \cite{Av,Ke,ML,Ye2} for information about DG-algebras and their derived categories. For a DG-algebra $A$, we denote by $\opn{DGMod} A$ the  category of DG-modules over $A$, and by $\mrm{D}(\opn{DGMod} A)$ the derived category over $A$.

 We shall need the following well known result from DG-homological algebra:

\begin{prop}\label{prop:qi-keep-info}
Let $A\to B$ be a quasi-isomorphism between two commutative DG-algebras, and let 
\[
\opn{Rest}_{B/A}:\mrm{D}(\opn{DGMod} B) \to \mrm{D}(\opn{DGMod} A)
\]
be the forgetful functor.
\begin{enumerate}
\item There is an isomorphism
\[
1_{\mrm{D}(\opn{DGMod} B)} \cong B\otimes^{\mrm{L}}_A \opn{Rest}_{B/A}(-)
\]
of functors $\mrm{D}(\opn{DGMod} B) \to \mrm{D}(\opn{DGMod} B)$.
\item There is an isomorphism
\[
1_{\mrm{D}(\opn{DGMod} B)} \cong \mrm{R}\opn{Hom}_A(B,\opn{Rest}_{B/A}(-))
\]
of functors $\mrm{D}(\opn{DGMod} B) \to \mrm{D}(\opn{DGMod} B)$.
\end{enumerate}
\end{prop}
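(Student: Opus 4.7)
The first thing I would observe is that parts (1) and (2) are equivalent via abstract adjunction theory, so it suffices to prove one of them. On the derived categories, $\opn{Rest}_{B/A}$ sits between two adjunctions
\[
B \otimes^{\mrm{L}}_A (-) \;\dashv\; \opn{Rest}_{B/A} \;\dashv\; \mrm{R}\opn{Hom}_A(B,-),
\]
obtained by deriving the standard tensor--Hom adjunctions on the DG-module level (using K-flat resolutions to derive $B \otimes_A -$ and K-injective resolutions to derive $\opn{Hom}_A(B,-)$). Statement (1) asserts that the counit of the left-hand adjunction is an isomorphism, while (2) asserts that the unit of the right-hand adjunction is an isomorphism. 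By the standard characterization, both conditions are equivalent to the single statement that $\opn{Rest}_{B/A}$ is fully faithful.

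To establish (1), fix $M \in \mrm{D}(\opn{DGMod} B)$, and choose a K-flat resolution $\phi: P \to \opn{Rest}_{B/A}(M)$ in $\opn{DGMod} A$. Then $B \otimes^{\mrm{L}}_A \opn{Rest}_{B/A}(M)$ is represented by $B \otimes_A P$ in $\mrm{D}(\opn{DGMod} B)$, and the counit is represented by the composition
\[
\varepsilon_M: B \otimes_A P \xrightarrow{\,\mathrm{id}_B \otimes \phi\,} B \otimes_A M \xrightarrow{\,\mu\,} M,
\]
where $\mu$ is the $B$-action on $M$. Let $\iota: P = A \otimes_A P \to B \otimes_A P$ be induced by the quasi-isomorphism $A \to B$. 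Since $P$ is K-flat over $A$, the map $\iota$ is itself a quasi-isomorphism. Direct inspection shows that $\varepsilon_M \circ \iota = \phi$ as a morphism in $\opn{DGMod} A$, so by the 2-out-of-3 property the counit $\varepsilon_M$ is a quasi-isomorphism. Functoriality of $\varepsilon_M$ in $M$ (since it comes from the counit of an adjunction) yields the claimed isomorphism of functors, proving (1) and hence (2) as well.

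The argument itself is essentially routine DG-homological algebra. The main point to verify is that the adjunctions at the derived level are set up correctly, i.e.\ that deriving the underived tensor--Hom--restriction adjunctions on DG-modules produces the stated adjunctions on the derived categories. This follows from the existence of K-flat and K-injective resolutions in $\opn{DGMod} A$ (available for any DG ring by the Spaltenstein--Keller--Yekutieli construction), together with the fact that $\opn{Rest}_{B/A}$ preserves quasi-isomorphisms and therefore descends to the derived category without alteration.
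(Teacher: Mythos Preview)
Your argument is correct. The paper itself does not give a proof at all: it simply invokes \cite[Tag 09S6]{Stack} (or \cite[Proposition 2.5(1)]{Ye2}) for part (1) and \cite[Lemma 2.2]{Sh} (or \cite[Proposition 2.5(2)]{Ye2}) for part (2), so there is no meaningful comparison of method to be made. Your proof is a self-contained unpacking of what those references contain; the K-flat argument for the counit is exactly the standard one. The observation that (1) and (2) are both equivalent to full faithfulness of $\opn{Rest}_{B/A}$, via the two adjunctions, is a clean way to package the result and saves you from having to repeat the dual argument with K-injective resolutions for (2). Nothing is missing.
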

\begin{proof}
\begin{enumerate}
\item This follows immediately from \cite[Tag 09S6]{Stack}, or \cite[Proposition 2.5(1)]{Ye2}.
\item This follows immediately from \cite[Lemma 2.2]{Sh}, or \cite[Proposition 2.5(2)]{Ye2}.
\end{enumerate}
\end{proof}

As far as we know, the next results are new even in the case where $A$ is noetherian. In the noetherian case, one does not need DG-algebras in the proof of the next result.

\begin{thm}\label{thm:wide-rgamma}
Let $A$ be a commutative ring, let $\a\subseteq A$ be a finitely generated ideal, and let $\mathbf{a}$ be a finite sequence that generates $\a$. Assume that $\a$ is weakly proregular.
Then there is an isomorphism of functors
\[
\mrm{R}\widehat{\Gamma}_{\a} (-) \cong \widehat{A} \otimes^{\mrm{L}}_A  (\opn{K}^{\vee}_{\infty}(A;\mathbf{a}) \otimes_{A} -).
\]
\end{thm}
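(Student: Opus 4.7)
The main obstacle is that $A\to\widehat{A}$ need not be flat in the non-noetherian setting, so the derived base change $\widehat{A}\otimes^{\mrm{L}}_A -$ cannot be computed by ordinary tensor product. The plan is to bypass this by choosing a K-flat commutative DG-$A$-algebra resolution $\widetilde{A}\to\widehat{A}$ (e.g., a semi-free Tate-style resolution). By Proposition \ref{prop:qi-keep-info}, restriction along the quasi-isomorphism $\widetilde{A}\to\widehat{A}$ induces an equivalence $\mrm{D}(\opn{DGMod}\widetilde{A})\simeq\mrm{D}(\opn{DGMod}\widehat{A})=\mrm{D}(\opn{Mod}\widehat{A})$, under which $\widehat{A}\otimes^{\mrm{L}}_A -$ corresponds to the plain tensor product $\widetilde{A}\otimes_A -$ (which preserves quasi-isomorphisms by K-flatness). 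Writing $\widetilde{\mathbf{a}}$ for the image of $\mathbf{a}$ in $\widetilde{A}$, the base change property of the Koszul complex gives $\widetilde{A}\otimes_A\opn{K}^\vee_\infty(A;\mathbf{a})=\opn{K}^\vee_\infty(\widetilde{A};\widetilde{\mathbf{a}})$.

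For $M\in\mrm{D}(\opn{Mod} A)$ with a K-injective $A$-resolution $I$, one has $\mrm{R}\widehat{\Gamma}_\a(M)=\widehat{\Gamma}_\a(I)=\Gamma_\a(I)$, equipped with its natural $\widehat{A}$-action. Weak proregularity supplies a quasi-iso $\opn{K}^\vee_\infty(A;\mathbf{a})\otimes_A I\xrightarrow{\simeq}\Gamma_\a(I)$ in $\mrm{D}(\opn{Mod} A)$. Applying $\widetilde{A}\otimes_A -$ and using the above base change yields a quasi-iso of DG-$\widetilde{A}$-modules
\[
\opn{K}^\vee_\infty(\widetilde{A};\widetilde{\mathbf{a}})\otimes_A I \;\xrightarrow{\simeq}\; \widetilde{A}\otimes_A\Gamma_\a(I),
\]
whose left side, via the equivalence of Proposition \ref{prop:qi-keep-info}, is a model for the right-hand side $\widehat{A}\otimes^{\mrm{L}}_A(\opn{K}^\vee_\infty(A;\mathbf{a})\otimes_A M)$ of the theorem. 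Composing with the multiplication map $\widetilde{A}\otimes_A\Gamma_\a(I)\to\Gamma_\a(I)$ arising from the $\widehat{A}$-action (via $\widetilde{A}\to\widehat{A}$) gives a functorial comparison morphism
\[
\widehat{A}\otimes^{\mrm{L}}_A(\opn{K}^\vee_\infty(A;\mathbf{a})\otimes_A M)\longrightarrow \mrm{R}\widehat{\Gamma}_\a(M)
\]
in $\mrm{D}(\opn{Mod}\widehat{A})$.

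The proof is completed by showing this comparison is a quasi-isomorphism. By conservativity of the restriction $\opn{Rest}_{\widehat{A}/A}$, it suffices to verify that the multiplication map $\widetilde{A}\otimes_A\Gamma_\a(I)\to\Gamma_\a(I)$ is a quasi-iso of $A$-complexes. Writing $\Gamma_\a(I)=\varinjlim_n\opn{Hom}_A(A/\a^n,I)$ and commuting $\widetilde{A}\otimes_A -$ with the filtered colimit, the problem reduces to showing, for each $n$, that $\widetilde{A}\otimes^{\mrm{L}}_A(A/\a^n)\cong A/\a^n$ in $\mrm{D}(\opn{Mod} A)$, equivalently, that $\opn{Tor}^A_{>0}(\widehat{A},A/\a^n)=0$. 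This Tor-vanishing is the hard part and is precisely where weak proregularity enters essentially: it can be deduced from the identification of derived and classical $\a$-adic completion in the weakly proregular case (cf.\ \cite{PSY3,BGW}), combined with the fact that $A/\a^n$ is already (both classically and derivedly) $\a$-adically complete. Once this vanishing is in hand, assembling the quasi-isomorphisms yields the theorem.
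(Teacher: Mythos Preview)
Your overall architecture is close to the paper's: both arguments pass through a K-flat commutative DG-$A$-algebra resolution $\widetilde A\to\widehat A$, use Proposition~\ref{prop:qi-keep-info} to transport between $\mrm{D}(\opn{DGMod}\widetilde A)$ and $\mrm{D}(\opn{Mod}\widehat A)$, and ultimately reduce to the statement that, over $A$, derived-tensoring a cohomologically $\a$-torsion complex with $\widehat A$ changes nothing. The paper realizes this last step concretely by comparing $\Gamma_\a(I)$ with $\Gamma_\a(J)$ for $J$ a K-injective resolution of $P\otimes_A\widetilde A$, and proving the key quasi-isomorphism via the chain
\[
\opn{Tel}(A;\mathbf a)\otimes_A P \xrightarrow{\ \simeq\ } \opn{Tel}(A;\mathbf a)\otimes_A P\otimes_A \opn{Hom}_A(\opn{Tel}(A;\mathbf a),A)\xrightarrow{\ \simeq\ } \opn{Tel}(A;\mathbf a)\otimes_A P\otimes_A \widehat A,
\]
invoking \cite[Corollary after (0.3)*]{AJL1} (equivalently the proof of \cite[Lemma~7.6]{PSY1}) and \cite[Corollary~5.23]{PSY1}. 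Your filtered-colimit reduction to the single claim $\opn{Tor}^A_{>0}(\widehat A,A/\a^n)=0$ is a legitimate and somewhat cleaner repackaging of the same underlying fact.

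However, your justification of this Tor-vanishing has a genuine gap. You appeal to ``derived completion $=$ classical completion'' from \cite{PSY3,BGW} together with $A/\a^n$ being derived complete, but those references concern the functor $\mrm L\Lambda_\a$, \emph{not} the functor $\widehat A\otimes^{\mrm L}_A(-)$. These are different functors, and knowing $\mrm L\Lambda_\a(A)\cong\widehat A$ and $\mrm L\Lambda_\a(A/\a^n)\cong A/\a^n$ does not by itself yield $\widehat A\otimes^{\mrm L}_A A/\a^n\cong A/\a^n$. What you actually need is the MGM-type identity $\mrm R\Gamma_\a(A)\xrightarrow{\simeq}\mrm R\Gamma_\a(\widehat A)$ in $\mrm D(\opn{Mod}A)$, i.e.\ that $\opn{Tel}(A;\mathbf a)\to\opn{Tel}(A;\mathbf a)\otimes_A\widehat A$ is a quasi-isomorphism; once you have this, tensoring with $A/\a^n$ (using K-flatness of $\opn{Tel}$ and that $\opn{Tel}\otimes_A A/\a^n\simeq A/\a^n$ since $A/\a^n$ is $\a$-torsion) gives the desired Tor-vanishing. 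That MGM statement is exactly what the paper establishes via its telescope diagram, citing \cite{AJL1,PSY1}; you should invoke those rather than \cite{PSY3,BGW}. A smaller point: the natural transformation of \cite[(4.19)]{PSY1} goes $\Gamma_\a(-)\to\opn{K}^\vee_\infty(A;\mathbf a)\otimes_A(-)$, not the reverse; this only affects the direction of your zigzag, not its validity in the derived category.
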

\begin{proof}
Set $\widehat{A} := \Lambda_{\a} (A)$. Consider the completion map $A\to \widehat{A}$. As $A$ is not necessarily noetherian, this map might fail to be flat, so let $A \xrightarrow{f} \widetilde{A} \xrightarrow{g} \widehat{A}$ be a K-flat DG-algebra resolution of $A\to \widehat{A}$. That is, $f:A \to \widetilde{A}$ is a K-flat DG-algebra map, $g:\widetilde{A} \to \widehat{A}$ is a quasi-isomorphism of DG-algebras, and $g\circ f$ is equal to the completion map $A \to \widehat{A}$. We denote by 
\[
\opn{Rest}_{\widehat{A}/\widetilde{A}} :\mrm{D}(\opn{Mod} \widehat{A}) \to \mrm{D}(\opn{DGMod} \widetilde{A})
\]
and by
\[
\opn{Rest}_{\widetilde{A}/A} : \mrm{D}(\opn{DGMod} \widetilde{A}) \to \mrm{D}(\opn{Mod} A)
\]
the corresponding forgetful functors.
Set 
\[
\mrm{R}\widetilde{\Gamma}_{\a} (-) := \opn{Rest}_{\widehat{A}/\widetilde{A}} \circ \mrm{R}\widehat{\Gamma}_{\a}(-) : \mrm{D}(\opn{Mod} A) \to \mrm{D}(\opn{DGMod} \widetilde{A}).
\]
Let $M \in \mrm{D}(\opn{Mod} A)$. Let $P\to M$ be a K-flat resolution of $M$, and let $M\to I$ be a K-injective resolution of $M$. The map $f:A\to \widetilde{A}$ induces a map $1_P \otimes_A f: P \to P\otimes_A \widetilde{A}$. Let $P\otimes_A \widetilde{A} \to J$ be a K-injective resolution of $P\otimes_A \widetilde{A}$ over $\widetilde{A}$. Because $f$ is flat, $J$ is also a K-injective resolution of $P\otimes_A \widetilde{A}$ over $A$. There is a unique map $\phi:I \to J$ in $\mrm{K}(\opn{Mod} A)$, which makes the diagram
\begin{equation}\label{eqn:inj-square}
\xymatrixcolsep{5pc}
\xymatrix{
P \ar[r]^{1_P\otimes_A f} \ar[d] & P\otimes_A \widetilde{A} \ar[d] \\
I \ar[r]^{\phi} & J
}
\end{equation}
commutative, 
and it induces a map $\Gamma_{\a}(\phi): \Gamma_{\a} (I) \to \Gamma_{\a} (J)$. Our goal is to show that $\Gamma_{\a}(\phi)$ is a quasi-isomorphism. The morphism of functors 
\[
\alpha(-): \Gamma_{\a} (-) \to \opn{K}^{\vee}_{\infty}(A;\mathbf{a}) \otimes_A -
\]
that was constructed in \cite[Equation (4.19)]{PSY1}, and the map $\phi$ induce the commutative diagram:
\begin{equation}\label{eqn:main-inj-square}
\xymatrixcolsep{5pc} 
\xymatrix{
\Gamma_{\a} (I) \ar[r]^{\Gamma_{\a}(\phi)} \ar[d]^{\alpha_I} & \Gamma_{\a} (J) \ar[d]^{\alpha_J} \\
\opn{K}^{\vee}_{\infty}(A;\mathbf{a}) \otimes_A I \ar[r]^{1\otimes_A \phi} & \opn{K}^{\vee}_{\infty}(A;\mathbf{a}) \otimes_A J
}
\end{equation}
Because $\mathbf{a}$ is weakly proregular, the two vertical maps are quasi-isomorphisms. We claim that the bottom horizontal map is also a quasi-isomorphism. To see this, consider the commutative diagram in $\mrm{K}(\opn{Mod} A)$:
\[
\xymatrixcolsep{5pc}
\xymatrix{
\opn{K}^{\vee}_{\infty}(A;\mathbf{a}) \otimes_A I \ar[r]^{1} & \opn{K}^{\vee}_{\infty}(A;\mathbf{a}) \otimes_A J \\
\opn{K}^{\vee}_{\infty}(A;\mathbf{a}) \otimes_A P \ar[u]^{2} \ar[r]^{3} & \opn{K}^{\vee}_{\infty}(A;\mathbf{a}) \otimes_A P\otimes_A \widetilde{A}  \ar[u]^{4} \\
\opn{Tel}(A;\mathbf{a}) \otimes_A P \ar[r]^{5} \ar[u]^{6}\ar[d]^{7}& \opn{Tel}(A;\mathbf{a}) \otimes_A P \otimes_A \widetilde{A} \ar[u]^{8}\ar[d]^{9}\\
\opn{Tel}(A;\mathbf{a}) \otimes_A P \otimes_A \opn{Hom}_A(\opn{Tel}(A;\mathbf{a}),A) \ar[r]^{10} & \opn{Tel}(A;\mathbf{a}) \otimes_A P \otimes_A \widehat{A}
}
\]
The top square in this diagram is induced from the square (\ref{eqn:inj-square}), the middle square is induced from the quasi-isomorphism 
\[
\opn{Tel}(A;\mathbf{a}) \to \opn{K}^{\vee}_{\infty}(A;\mathbf{a}),
\]
and the bottom square is induced from the commutative diagram \cite[Equation 5.26]{PSY1}. By \cite[Corollary 5.23]{PSY1}, the map $\opn{Hom}_A(\opn{Tel}(A;\mathbf{a}),A) \to \widehat{A}$ is a quasi-isomorphism. Since $\opn{Tel}(A;\mathbf{a})$ and $P$ are both K-flat, it follows that (10) is a quasi-isomorphism. K-flatness of $P$ also implies that (9) is a quasi-isomorphism. The map (7), which is induced by the map $A \to \opn{Hom}_A(\opn{Tel}(A;\mathbf{a}),A)$ is a quasi-isomorphism by \cite[Corollary after (0.3)*]{AJL1} (or the proof of \cite[Lemma 7.6]{PSY1}). Hence, the map (5) is also a quasi-isomorphism. It is clear that the maps (6) and (8) are quasi-isomorphisms, so that (3) is also a quasi-isomorphism. As (2) and (4) are also quasi-isomorphisms, we deduce that (1) is a quasi-isomorphism. Returning to the commutative diagram (\ref{eqn:main-inj-square}), we deduce that the map 
\[
\Gamma_{\a}(\phi): \Gamma_{\a}(I) \to \Gamma_{\a}(J)
\]
is a quasi-isomorphism. 

There are functorial isomorphisms in $\mrm{D}(\opn{DGMod} \widetilde{A})$:
\[
\mrm{R}\widetilde{\Gamma}_{\a} (M) = \opn{Rest}_{\widehat{A}/\widetilde{A}} (\mrm{R}\widehat{\Gamma}_{\a} (M)) \cong \opn{Rest}_{\widehat{A}/\widetilde{A}}(\widehat{\Gamma}_{\a} (I)).
\]
Since the map $\Gamma_{\a}(\phi): \Gamma_{\a}(I) \to \Gamma_{\a}(J)$ is a quasi-isomorphism, it follows that the map
\[
\opn{Rest}_{\widehat{A}/\widetilde{A}} (\widehat{\Gamma}_{\a}(\phi)) :
\opn{Rest}_{\widehat{A}/\widetilde{A}} (\widehat{\Gamma}_{\a} (I)) \to \opn{Rest}_{\widehat{A}/\widetilde{A}}(\widehat{\Gamma}_{\a} (J))
\]
is also a quasi-isomorphism. The DG $\widetilde{A}$-module $\opn{Rest}_{\widehat{A}/\widetilde{A}}(\widehat{\Gamma}_{\a} (J))$ is a sub DG-module of $J$, and the inclusion map induces a map
\begin{equation}\label{eqn:incl}
\opn{Rest}_{\widehat{A}/\widetilde{A}}(\widehat{\Gamma}_{\a} (J)) \to \opn{K}^{\vee}_{\infty}(A;\mathbf{a}) \otimes_{A} J.
\end{equation}
Applying the forgetful functor $\opn{Rest}_{\widetilde{A}/A}$ to the map in Equation (\ref{eqn:incl}) yields the quasi-isomorphism
\[
\Gamma_{\a} (J) \cong \opn{K}^{\vee}_{\infty}(A;\mathbf{a}) \otimes_{A} J,
\]
so that the map in Equation (\ref{eqn:incl}) is also a quasi-isomorphism.
Hence,
\[
\mrm{R}\widetilde{\Gamma}_{\a} (M) \cong \opn{K}^{\vee}_{\infty}(A;\mathbf{a}) \otimes_A J \cong \opn{K}^{\vee}_{\infty}(A;\mathbf{a}) \otimes_{A} M \otimes_A \widetilde{A}.
\]
By Proposition \ref{prop:qi-keep-info}, there is an isomorphism of functors 
\[
1_{D(\opn{DGMod} \widehat{A})} \cong \widehat{A} \otimes^{\mrm{L}}_{\widetilde{A}} \opn{Rest}_{\widehat{A}/\widetilde{A}}(-).
\] 
Hence, 
\[
\mrm{R}\widehat{\Gamma}_{\a} (-) \cong \widehat{A} \otimes^{\mrm{L}}_{\widetilde{A}} \mrm{R}\widetilde{\Gamma}_{\a} (-),
\]
which implies that
\[
\mrm{R}\widehat{\Gamma}_{\a} (M) \cong  \widehat{A} \otimes^{\mrm{L}}_A (\opn{K}^{\vee}_{\infty}(A;\mathbf{a}) \otimes_{A} M).
\]
\end{proof}

Dually, we have the next result for the $\mrm{L}\widehat{\Lambda}_{\a}$ functor. Note however that in this case, even if $A$ is noetherian, we have to use DG-algebra resolutions, because $\widehat{A}$ is almost never projective over $A$ (See for example \cite[Theorem 2.1]{BF}).

\begin{thm}\label{thm:wide-llambda}
Let $A$ be a commutative ring, let $\a\subseteq A$ be a finitely generated ideal, and let $\mathbf{a}$ be a finite sequence that generates $\a$. Assume that $\a$ is weakly proregular.
Then there is an isomorphism of functors
\[
\mrm{L}\widehat{\Lambda}_{\a} (-) \cong \mrm{R}\opn{Hom}_A(\widehat{A}\otimes_A \opn{Tel}(A;\mathbf{a}),-)
\]
\end{thm}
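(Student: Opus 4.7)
The proof parallels that of Theorem~\ref{thm:wide-rgamma} but dualized in the appropriate sense: K-flat resolutions replace K-injective ones, Proposition~\ref{prop:qi-keep-info}(2) replaces Proposition~\ref{prop:qi-keep-info}(1) in the descent, and the natural transformation $\Lambda_{\a}(-) \to \opn{Hom}_A(\opn{Tel}(A;\mathbf{a}), -)$ plays the role of $\alpha: \Gamma_{\a} \to \opn{K}^{\vee}_{\infty}(A;\mathbf{a}) \otimes_A -$. I would reuse the K-flat DG-algebra resolution $A \xrightarrow{f} \widetilde{A} \xrightarrow{g} \widehat{A}$ from the previous proof, and introduce the intermediate functor
\[
\mrm{L}\widetilde{\Lambda}_{\a} := \opn{Rest}_{\widehat{A}/\widetilde{A}} \circ \mrm{L}\widehat{\Lambda}_{\a} : \mrm{D}(\opn{Mod} A) \to \mrm{D}(\opn{DGMod} \widetilde{A}).
\]

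The first stage would be to establish a formula for $\mrm{L}\widetilde{\Lambda}_{\a}$ at the DG-algebra level. For $M \in \mrm{D}(\opn{Mod} A)$, pick a K-flat resolution $P \to M$ over $A$; then $\widehat{\Lambda}_{\a}(P) = \Lambda_{\a}(P)$ represents $\mrm{L}\widehat{\Lambda}_{\a}(M)$ with its natural $\widehat{A}$-action. The central step is to show, by a commutative-diagram argument dual to equation~(\ref{eqn:main-inj-square}), that there is an isomorphism in $\mrm{D}(\opn{DGMod} \widetilde{A})$
\[
\mrm{L}\widetilde{\Lambda}_{\a}(M) \cong \opn{Hom}_A(\opn{Tel}(A;\mathbf{a}), P \otimes_A \widetilde{A}).
\]
The ingredients mirror those of Theorem~\ref{thm:wide-rgamma}: the natural morphism $\Lambda_{\a}(-) \to \opn{Hom}_A(\opn{Tel}(A;\mathbf{a}), -)$ (a quasi-isomorphism on K-flat inputs by weak proregularity together with the telescopic computation of $\mrm{L}\Lambda_{\a}$ recalled in the preliminaries), the base-change property $\opn{Tel}(A;\mathbf{a}) \otimes_A \widetilde{A} \cong \opn{Tel}(\widetilde{A}; f(\mathbf{a}))$, and the quasi-isomorphism $A \to \opn{Hom}_A(\opn{Tel}(A;\mathbf{a}), A) \simeq \widehat{A}$ used in Theorem~\ref{thm:wide-rgamma} combine to yield the required zigzag.

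For the descent, apply Proposition~\ref{prop:qi-keep-info}(2) to the quasi-isomorphism $g$:
\[
\mrm{L}\widehat{\Lambda}_{\a}(M) \cong \mrm{R}\opn{Hom}_{\widetilde{A}}(\widehat{A}, \mrm{L}\widetilde{\Lambda}_{\a}(M)) \cong \mrm{R}\opn{Hom}_{\widetilde{A}}\bigl(\widehat{A},\, \opn{Hom}_A(\opn{Tel}(A;\mathbf{a}), P \otimes_A \widetilde{A})\bigr).
\]
Rewriting the inner Hom via the base-change adjunction as $\opn{Hom}_{\widetilde{A}}(\widetilde{A} \otimes_A \opn{Tel}(A;\mathbf{a}), P \otimes_A \widetilde{A})$, applying derived tensor-Hom adjunction over $\widetilde{A}$, and collapsing $\widehat{A} \otimes^{\mrm{L}}_{\widetilde{A}} (\widetilde{A} \otimes_A \opn{Tel}(A;\mathbf{a})) \simeq \widehat{A} \otimes_A \opn{Tel}(A;\mathbf{a})$ (using K-flatness of $\opn{Tel}(A;\mathbf{a})$ over $A$), identifies the right-hand side with $\mrm{R}\opn{Hom}_A(\widehat{A} \otimes_A \opn{Tel}(A;\mathbf{a}), M)$; the final passage from Hom over $\widetilde{A}$ to Hom over $A$ exploits K-flatness of $\widetilde{A}$ over $A$ together with the fact that $P \otimes_A \widetilde{A}$ is K-injective over $A$ when replaced by a suitable K-injective $\widetilde{A}$-resolution.

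The main technical obstacle is the central diagram chase. Compared with Theorem~\ref{thm:wide-rgamma}, it is more delicate because Hom-complexes do not commute naively with base change $\otimes_A \widetilde{A}$, and because $\opn{Tel}(A;\mathbf{a})$ is only countably generated as a complex of free modules, ruling out straightforward term-wise manipulations. All comparisons must therefore be routed through the base-changed telescope $\opn{Tel}(\widetilde{A}; f(\mathbf{a}))$, and one must verify that every quasi-isomorphism in the zigzag is compatible with the relevant DG $\widetilde{A}$-module, and ultimately $\widehat{A}$-module, structure.
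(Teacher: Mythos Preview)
Your overall shape is right (intermediate functor through a DG resolution of $\widehat{A}$, then descend via Proposition~\ref{prop:qi-keep-info}(2)), but the dualization is carried out on the wrong side, and this creates a genuine gap in the last step. After your adjunctions you arrive at
\[
\mrm{R}\opn{Hom}_{\widetilde{A}}\bigl(\widehat{A}\otimes_A \opn{Tel}(A;\mathbf{a}),\, P\otimes_A \widetilde{A}\bigr),
\]
and you then claim this equals $\mrm{R}\opn{Hom}_A(\widehat{A}\otimes_A \opn{Tel}(A;\mathbf{a}),M)$. There is no such adjunction: for a $\widetilde{A}$-complex $X$ one has $\mrm{R}\opn{Hom}_{\widetilde{A}}(X,\mrm{R}\opn{Hom}_A(\widetilde{A},M))\cong \mrm{R}\opn{Hom}_A(X,M)$, but \emph{not} $\mrm{R}\opn{Hom}_{\widetilde{A}}(X, M\otimes^{\mrm{L}}_A \widetilde{A})\cong \mrm{R}\opn{Hom}_A(X,M)$. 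Replacing $P\otimes_A\widetilde{A}$ by a K-injective $\widetilde{A}$-resolution does not repair this; it only computes the left-hand side correctly. (Incidentally, the natural transformation goes the other way: $\beta:\opn{Hom}_A(\opn{Tel}(A;\mathbf{a}),-)\to\Lambda_{\a}(-)$.)

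The paper's proof dualizes differently: it takes a K-\emph{projective} DG-algebra resolution $\widetilde{A}\to\widehat{A}$, a K-injective resolution $M\to I$ over $A$, and works with the \emph{coinduced} object $\opn{Hom}_A(\widetilde{A},I)$ together with a K-projective $\widetilde{A}$-resolution $Q\to\opn{Hom}_A(\widetilde{A},I)$. The diagram chase then compares $\Lambda_{\a}(Q)$ with $\Lambda_{\a}(P)$ and yields
\[
\mrm{L}\widetilde{\Lambda}_{\a}(M)\;\cong\;\opn{Hom}_A(\opn{Tel}(A;\mathbf{a}),Q)\;\cong\;\opn{Hom}_A\bigl(\opn{Tel}(A;\mathbf{a}),\opn{Hom}_A(\widetilde{A},I)\bigr)\;\cong\;\mrm{R}\opn{Hom}_A(\opn{Tel}(A;\mathbf{a})\otimes_A\widetilde{A},M).
\]
Now the descent step works cleanly: applying $\mrm{R}\opn{Hom}_{\widetilde{A}}(\widehat{A},-)$ and the (correct) adjunction gives $\mrm{R}\opn{Hom}_A(\widehat{A}\otimes_A\opn{Tel}(A;\mathbf{a}),M)$. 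The point is that the true dual of ``pass to $P\otimes_A\widetilde{A}$ and take a K-injective resolution'' in Theorem~\ref{thm:wide-rgamma} is ``pass to $\opn{Hom}_A(\widetilde{A},I)$ and take a K-projective resolution'', because restriction is left adjoint to coinduction, and that is the adjunction you need at the end.
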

\begin{proof}
We use notations as in the proof of Theorem \ref{thm:wide-rgamma}. Let $A \xrightarrow{f} \widetilde{A} \xrightarrow{g} \widehat{A}$ be a K-projective DG-algebra resolution of $A\to \widehat{A}$, and set 
\[
\mrm{L}\widetilde{\Lambda}_{\a} (-) := \opn{Rest}_{\widehat{A}/\widetilde{A}} \circ \mrm{L}\widehat{\Lambda}_{\a}(-) : \mrm{D}(\opn{Mod} A) \to \mrm{D}(\opn{DGMod} \widetilde{A}).
\]
Let $M \in \mrm{D}(\opn{Mod} A)$. Let $P \to M$ be a K-projective resolution, and let $M \to I$ be a K-injective resolution. The map $f: A\to \widetilde{A}$ induces a map $\opn{Hom}_A(f,1): \opn{Hom}_A(\widetilde{A},I) \to I$. Let $Q \to \opn{Hom}_A(\widetilde{A},I)$ be a K-projective resolution of $\opn{Hom}_A(\widetilde{A},I)$ over $\widetilde{A}$. There is a unique map $\phi: Q \to P$ in $\mrm{K}(\opn{Mod} A)$ making the diagram
\begin{equation}\label{eqn:proj-square}
\xymatrixcolsep{6pc}
\xymatrix{
\opn{Hom}_A(\widetilde{A},I) \ar[r]^{\opn{Hom}_A(f,1)} & I \\
Q \ar[u]\ar[r]^{\phi} & P \ar[u]
}
\end{equation}
commutative. The morphism of functors 
\[
\beta(-): \opn{Hom}_A(\opn{Tel}(A;\mathbf{a}),-) \to \Lambda_{\a} (-) 
\]
that was constructed in \cite[Definition 5.16]{PSY1} and the map $\phi:Q\to P$ induce a commutative diagram
\begin{equation}\label{eqn:main-proj-square}
\xymatrixcolsep{6pc}
\xymatrix{
\Lambda_{\a}(Q) \ar[r]^{\Lambda_{\a}(\phi)} & \Lambda_{\a}(P)\\
\opn{Hom}_A(\opn{Tel}(A;\mathbf{a}),Q) \ar[u]^{\beta_Q}\ar[r]^{\opn{Hom}_A(1,\phi)} & \opn{Hom}_A(\opn{Tel}(A;\mathbf{a}),P)\ar[u]^{\beta_P},
}
\end{equation}
and because of weak proregularity of $\mathbf{a}$, the two vertical maps in this diagram are quasi-isomorphisms. We will show that the bottom horizontal map is also a quasi-isomorphism, which will imply that the top horizontal map is a quasi-isomorphism. To see this, consider the commutative diagram in $\mrm{K}(\opn{Mod} A)$:
\[
\xymatrixcolsep{5pc}
\xymatrix{
\opn{Hom}_A(T,Q) \ar[r]^{1}\ar[d]^{2} & \opn{Hom}_A(T,P)\ar[d]^{3}\\
\opn{Hom}_A(T,\opn{Hom}_A(\widetilde{A},I)) \ar[r]^{4} & \opn{Hom}_A(\opn{Tel}(A;\mathbf{a}),I)\\
\opn{Hom}_A(T,\opn{Hom}_A(\widehat{A},I)) \ar[u]^{5} \ar[r]^{6} & \opn{Hom}_A(T, \opn{Hom}_A( \opn{Hom}_A(T,A) ,I)) \ar[u]^{7}
}
\]
where we have set $T := \opn{Tel}(A;\mathbf{a})$. The top square of this diagram is induced from the square (\ref{eqn:proj-square}), while the bottom square is induced from the commutative diagram of \cite[Equation 5.26]{PSY1}. Weak proregularity of $\mathbf{a}$ implies that the map (6) is a quasi-isomorphism. The fact that $I$ is K-injective and that $\opn{Tel}(A;\mathbf{a})$ is K-projective implies that (5) is a quasi-isomorphism. The hom-tensor adjunction and \cite[Corollary after (0.3)*]{AJL1} (or the proof of \cite[Lemma 7.6]{PSY1}) shows that (7) is also a quasi-isomorphism. Hence, (4) is a quasi-isomorphism. As (2) and (3) are clearly quasi-isomorphisms, we deduce that (1) is a quasi-isomorphism. Returning to the commutative diagram (\ref{eqn:main-proj-square}), we deduce that the map
\[
\Lambda_{\a}(\phi): \Lambda_{\a}(Q) \to \Lambda_{\a}(P)
\]
is a quasi-isomorphism. 

There are functorial isomorphisms in $\mrm{D}(\opn{DGMod} \widetilde{A})$:
\[
\mrm{L}\widetilde{\Lambda}_{\a} (M) = \opn{Rest}_{\widehat{A}/\widetilde{A}} (\mrm{L}\widehat{\Lambda}_{\a} (M)) \cong 
\opn{Rest}_{\widehat{A}/\widetilde{A}} (\widehat{\Lambda}_{\a}(P)),
\]
and since the map $\Lambda_{\a}(\phi) : \Lambda_{\a}(Q) \to \Lambda_{\a}(P)$ is a quasi-isomorphism, it follows that the map
\[
\opn{Rest}_{\widehat{A}/\widetilde{A}}( \widehat{\Lambda}_{\a}(\phi) ): \opn{Rest}_{\widehat{A}/\widetilde{A}} (\widehat{\Lambda}_{\a}(Q))  \to \opn{Rest}_{\widehat{A}/\widetilde{A}}(\widehat{\Lambda}_{\a}(P))
\]
is also a quasi-isomorphism. By \cite[Corollary 5.23]{PSY1}, there is an $A$-linear quasi-isomorphism
\[
\beta_Q: \opn{Hom}_A(\opn{Tel}(A;\mathbf{a}),Q) \to \Lambda_{\a}(Q),
\]
and it is easy to verify that the same construction give rise to an $\widetilde{A}$-linear quasi-isomorphism
\[
\opn{Hom}_A(\opn{Tel}(A;\mathbf{a}),Q) \to \opn{Rest}_{\widehat{A}/\widetilde{A}} (\widehat{\Lambda}_{\a}(Q)).
\]

Hence, there are isomorphisms of functors
\begin{gather*}
\mrm{L}\widetilde{\Lambda}_{\a} (M) \cong \opn{Hom}_A(\opn{Tel}(A;\mathbf{a}), Q) \cong\nonumber\\ \opn{Hom}_A(\opn{Tel}(A;\mathbf{a}),\opn{Hom}_A(\widetilde{A},I)) \cong \mrm{R}\opn{Hom}_A(\opn{Tel}(A;\mathbf{a})\otimes_A \widetilde{A}, M)\nonumber.
\end{gather*}
By Proposition \ref{prop:qi-keep-info}, there is an isomorphism of functors 
\[
1_{D(\opn{DGMod} \widehat{A})} \cong \mrm{R}\opn{Hom}_{\widetilde{A}}(\widehat{A} ,\opn{Rest}_{\widehat{A}/\widetilde{A}}(-)).
\]
Hence, 
\[
\mrm{L}\widehat{\Lambda}_{\a} (-) \cong \mrm{R}\opn{Hom}_{\widetilde{A}}(\widehat{A}, \mrm{L}\widetilde{\Lambda}_{\a} (-)),
\]
which implies that
\[
\mrm{L}\widehat{\Lambda}_{\a} (M) \cong \mrm{R}\opn{Hom}_A(\opn{Tel}(A;\mathbf{a}) \otimes_A \widehat{A}, M).
\]
\end{proof}

\begin{cor}\label{cor:generalized-GM}
Let $A$ be a commutative ring, let $\a\subseteq A$ be a weakly proregular ideal, and let $M,N \in \mrm{D}(\opn{Mod} A)$. Then there are isomorphisms
\[
\mrm{L}\widehat{\Lambda}_{\a} (\mrm{R}\opn{Hom}_A(M,N))	 \cong \mrm{R}\opn{Hom}_A( \mrm{R}\widehat{\Gamma}_{\a} (M), N) \cong \mrm{R}\opn{Hom}_A(M,\mrm{L}\widehat{\Lambda}_{\a} (N))
\]
of functors 
\[
\mrm{D}(\opn{Mod} A) \times \mrm{D}(\opn{Mod} A) \to \mrm{D}(\opn{Mod} \widehat{A}).
\]
\end{cor}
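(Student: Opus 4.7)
The plan is to deduce the corollary directly from Theorems \ref{thm:wide-rgamma} and \ref{thm:wide-llambda} by an explicit string of tensor–hom adjunctions, taking full advantage of the K-flatness of the telescope complex. Since $\a$ is weakly proregular, the preliminaries show that $\opn{Tel}(A;\mathbf{a})$ and $\opn{K}^{\vee}_{\infty}(A;\mathbf{a})$ are both K-flat resolutions of $\mrm{R}\Gamma_{\a}(A)$, so they can be used interchangeably. In particular, Theorem \ref{thm:wide-rgamma} yields the bifunctorial isomorphism
\[
\mrm{R}\widehat{\Gamma}_{\a}(M) \;\cong\; \widehat{A} \otimes^{\mrm{L}}_A \opn{Tel}(A;\mathbf{a}) \otimes_A M,
\]
and Theorem \ref{thm:wide-llambda} reads
\[
\mrm{L}\widehat{\Lambda}_{\a}(N) \;\cong\; \mrm{R}\opn{Hom}_A\!\bigl( \widehat{A}\otimes_A \opn{Tel}(A;\mathbf{a}), N\bigr).
\]

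For the left isomorphism, I would start from the right-hand side of Theorem \ref{thm:wide-llambda} applied to $\mrm{R}\opn{Hom}_A(M,N)$:
\[
\mrm{L}\widehat{\Lambda}_{\a}\!\bigl(\mrm{R}\opn{Hom}_A(M,N)\bigr) \;\cong\; \mrm{R}\opn{Hom}_A\!\bigl(\widehat{A}\otimes_A \opn{Tel}(A;\mathbf{a}),\, \mrm{R}\opn{Hom}_A(M,N)\bigr).
\]
Since $\opn{Tel}(A;\mathbf{a})$ is a bounded complex of free $A$-modules, the tensor product $\widehat{A}\otimes_A\opn{Tel}(A;\mathbf{a})$ represents the derived tensor product, and a standard tensor–hom adjunction rewrites the right-hand side as
\[
\mrm{R}\opn{Hom}_A\!\bigl(\widehat{A}\otimes_A \opn{Tel}(A;\mathbf{a}) \otimes^{\mrm{L}}_A M,\, N\bigr).
\]
By Theorem \ref{thm:wide-rgamma}, the first argument is precisely $\mrm{R}\widehat{\Gamma}_{\a}(M)$, which gives the first isomorphism.

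The second isomorphism is symmetric: starting from $\mrm{R}\opn{Hom}_A(\mrm{R}\widehat{\Gamma}_{\a}(M),N)$, I would substitute the formula from Theorem \ref{thm:wide-rgamma}, commute the tensor factors to bring $M$ to the outside, apply tensor–hom adjunction to peel off $M$, and finally recognize the remaining $\mrm{R}\opn{Hom}_A(\widehat{A}\otimes_A \opn{Tel}(A;\mathbf{a}), N)$ as $\mrm{L}\widehat{\Lambda}_{\a}(N)$ via Theorem \ref{thm:wide-llambda}. Since all three target objects receive a canonical $\widehat{A}$-module structure coming from the $\widehat{A}$-action on $\widehat{A}\otimes_A\opn{Tel}(A;\mathbf{a})$, the resulting isomorphisms are automatically $\widehat{A}$-linear and functorial in both arguments.

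The only potential subtlety—and the step I would check most carefully—is that the $\widehat{A}$-structures on the three sides all agree with the ones produced by the functors $\mrm{L}\widehat{\Lambda}_{\a}$ and $\mrm{R}\widehat{\Gamma}_{\a}$ as defined in the introduction of Section \ref{section:widehat}, rather than merely with some $\widehat{A}$-action coming from Theorems \ref{thm:wide-rgamma}–\ref{thm:wide-llambda}. This is not a serious obstacle since Theorems \ref{thm:wide-rgamma} and \ref{thm:wide-llambda} are stated as isomorphisms of functors landing in $\mrm{D}(\opn{Mod}\widehat{A})$, so their outputs carry the correct structure by construction; the tensor–hom manipulations above respect this structure because the $\widehat{A}$-action enters only through the first argument of the outer $\opn{Hom}$. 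Once this is verified, no further work is required.
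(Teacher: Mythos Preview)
Your proposal is correct and follows essentially the same route as the paper: apply Theorems \ref{thm:wide-rgamma} and \ref{thm:wide-llambda} to rewrite all three expressions in terms of $\widehat{A}\otimes_A\opn{Tel}(A;\mathbf{a})$ (equivalently $\opn{K}^{\vee}_{\infty}(A;\mathbf{a})$), then use derived hom--tensor adjunction to pass between them. Your extra paragraph verifying that the $\widehat{A}$-structures match is a reasonable precaution that the paper leaves implicit.
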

\begin{proof}
Let $\mathbf{a}$ be a finite sequence that generates $\a$. The hom-tensor adjunction and the quasi-isomorphism
$\opn{K}^{\vee}_{\infty}(A;\mathbf{a}) \cong \opn{Tel}(A;\mathbf{a})$ show that there are bifunctorial isomorphisms
\[
\mrm{R}\opn{Hom}_A(\opn{Tel}(A;\mathbf{a})\otimes_A \widehat{A}, \mrm{R}\opn{Hom}_A(M,N) ) \cong 
\mrm{R}\opn{Hom}_A(\opn{K}^{\vee}_{\infty}(A;\mathbf{a})\otimes_A \widehat{A} \otimes^{\mrm{L}}_A M, N)
\]
and
\[
\mrm{R}\opn{Hom}_A(\opn{K}^{\vee}_{\infty}(A;\mathbf{a})\otimes_A \widehat{A} \otimes^{\mrm{L}}_A M, N)
\cong
\mrm{R}\opn{Hom}_A(M, \mrm{R}\opn{Hom}_A(\opn{Tel}(A;\mathbf{a})\otimes_A \widehat{A},N)),
\]
so the result follows from Theorem \ref{thm:wide-rgamma}  and Theorem \ref{thm:wide-llambda}.
\end{proof}

\begin{rem}
If $A = \widehat{A}$, then the above corollary collapses to the Greenlees-May duality (See \cite[Theorem (0.3]{AJL1}, or \cite[Theorem 7.12]{PSY1}).
\end{rem}

The next two corollaries will be applied in the next section to study relations between the derived completion functor and Hochschild cohomology.

\begin{cor}\label{cor:compare-adic-ext}
Let $A$ be a commutative ring, and let $\a\subseteq A$ be a weakly proregular ideal. Given a ring map $\widehat{A} \to B$, and a complex $M \in \mrm{D}(\opn{Mod} B)$,  there is an isomorphism
\[
\mrm{R}\opn{Hom}_A(M,\mrm{L}\Lambda_{\a}(-)) \cong \mrm{R}\opn{Hom}_{\widehat{A}}(M,\mrm{L}\widehat{\Lambda}_{\a}(-))
\]
of functors 
\[
\mrm{D}(\opn{Mod} A) \to \mrm{D}(\opn{Mod} B).
\]
\end{cor}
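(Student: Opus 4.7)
The plan is to unwind both sides via the telescope complex and Hom--tensor adjunction, reducing the claim to a single associativity identity of tensor products. Let $\mathbf{a}$ be a finite sequence generating $\a$. Using the classical identification $\mrm{L}\Lambda_{\a}(-)\cong \mrm{R}\opn{Hom}_A(\opn{Tel}(A;\mathbf{a}),-)$ recalled in Section \ref{section:review}, combined with Hom--tensor adjunction over $A$, I obtain a natural isomorphism
\[
\mrm{R}\opn{Hom}_A(M,\mrm{L}\Lambda_{\a}(N)) \cong \mrm{R}\opn{Hom}_A\bigl(M \otimes^{\mrm{L}}_A \opn{Tel}(A;\mathbf{a}),\,N\bigr).
\]
Symmetrically, Theorem \ref{thm:wide-llambda} identifies $\mrm{L}\widehat{\Lambda}_{\a}(N)$ with $\mrm{R}\opn{Hom}_A(\widehat{A}\otimes_A \opn{Tel}(A;\mathbf{a}),N)$, so Hom--tensor adjunction along the ring map $A\to\widehat{A}$ gives
\[
\mrm{R}\opn{Hom}_{\widehat{A}}(M,\mrm{L}\widehat{\Lambda}_{\a}(N)) \cong \mrm{R}\opn{Hom}_A\bigl(M \otimes^{\mrm{L}}_{\widehat{A}}(\widehat{A}\otimes_A \opn{Tel}(A;\mathbf{a})),\,N\bigr).
\]

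It therefore suffices to produce a natural isomorphism
\[
M\otimes^{\mrm{L}}_{\widehat{A}}\bigl(\widehat{A}\otimes_A \opn{Tel}(A;\mathbf{a})\bigr) \cong M \otimes^{\mrm{L}}_A \opn{Tel}(A;\mathbf{a})
\]
in $\mrm{D}(\opn{Mod} A)$ (respecting the $B$-module structure on $M$). For this, I choose a K-flat resolution $P\to M$ over $\widehat{A}$. Because $\opn{Tel}(A;\mathbf{a})$ is a bounded complex of free $A$-modules, its base change $\widehat{A}\otimes_A \opn{Tel}(A;\mathbf{a})$ is a bounded complex of free $\widehat{A}$-modules, hence K-flat over $\widehat{A}$. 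The derived tensor product on the left is then computed as $P\otimes_{\widehat{A}}(\widehat{A}\otimes_A \opn{Tel}(A;\mathbf{a}))$, which by ordinary associativity equals $P\otimes_A \opn{Tel}(A;\mathbf{a})$. Since restriction along $A\to\widehat{A}$ preserves quasi-isomorphisms and $\opn{Tel}(A;\mathbf{a})$ is K-flat over $A$, the complex $P\otimes_A \opn{Tel}(A;\mathbf{a})$ computes $M\otimes^{\mrm{L}}_A \opn{Tel}(A;\mathbf{a})$, as desired.

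Stringing together these three natural equivalences proves the corollary, and each isomorphism used (adjunction, base change of $\opn{Tel}$, tensor associativity) is plainly bifunctorial in $M$ and $N$, so the result holds as an isomorphism of functors $\mrm{D}(\opn{Mod} A)\to\mrm{D}(\opn{Mod} B)$. There is no real obstacle here: weak proregularity is precisely what licenses the telescope formulas for $\mrm{L}\Lambda_{\a}$ and $\mrm{L}\widehat{\Lambda}_{\a}$, and at no point do we require flatness of the completion map $A\to\widehat{A}$, because the K-flatness of the telescope complex over $A$ does all the work. The one point worth double-checking is the mild sleight of hand of using a K-flat $\widehat{A}$-resolution of $M$ both for the derived tensor over $\widehat{A}$ and, after restriction, for the derived tensor over $A$; this is valid precisely because $\opn{Tel}(A;\mathbf{a})$ is a bounded complex of flat $A$-modules.
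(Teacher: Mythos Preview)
Your proof is correct and follows essentially the same approach as the paper: both use the telescope description of $\mrm{L}\Lambda_{\a}$ from Section~\ref{section:review} and of $\mrm{L}\widehat{\Lambda}_{\a}$ from Theorem~\ref{thm:wide-llambda}, then reduce the comparison to Hom--tensor adjunction. The paper applies adjunction twice directly (first rewriting $\mrm{R}\opn{Hom}_A(\widehat{A}\otimes_A\opn{Tel}(A;\mathbf{a}),N)$ as $\mrm{R}\opn{Hom}_A(\widehat{A},\mrm{R}\opn{Hom}_A(\opn{Tel}(A;\mathbf{a}),N))$, then collapsing $M\otimes^{\mrm{L}}_{\widehat{A}}\widehat{A}\cong M$), whereas you move $\opn{Tel}$ into the first variable and spell out the associativity $M\otimes^{\mrm{L}}_{\widehat{A}}(\widehat{A}\otimes_A\opn{Tel}(A;\mathbf{a}))\cong M\otimes^{\mrm{L}}_A\opn{Tel}(A;\mathbf{a})$ explicitly; these are the same manipulation.
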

\begin{proof}
Let $\mathbf{a}$ be a finite sequence that generates $\a$. Let $N \in \mrm{D}(\opn{Mod} A)$. By Theorem \ref{thm:wide-llambda}, there is an isomorphism of functors
\[
\mrm{R}\opn{Hom}_{\widehat{A}}(M,\mrm{L}\widehat{\Lambda}_{\a}(N)) \cong \mrm{R}\opn{Hom}_{\widehat{A}}(M, \mrm{R}\opn{Hom}_A(\opn{Tel}(A;\mathbf{a}) \otimes_A \widehat{A}, N)).
\]
Applying the hom-tensor adjunction twice, we get an isomorphism
\[
\mrm{R}\opn{Hom}_{\widehat{A}}(M, \mrm{R}\opn{Hom}_A(\opn{Tel}(A;\mathbf{a}) \otimes_A \widehat{A}, N)) \cong
\mrm{R}\opn{Hom}_A(M, \opn{Hom}_A(\opn{Tel}(A;\mathbf{a}), N)),
\]
which proves the claim.
\end{proof}

\begin{cor}\label{cor:LLam-ofHom}
Let $A$ be a commutative ring, and let $B$ be a commutative $A$-algebra. Let $\a\subseteq A$ be a  weakly proregular ideal, let $\b = \a \cdot B$, and assume that $\b$ is also weakly proregular. Then there is an isomorphism
\[
\mrm{L}\widehat{\Lambda}_{\b} \mrm{R}\opn{Hom}_A(B,-) \cong  \mrm{R}\opn{Hom}_A(\widehat{B},\mrm{L}\Lambda_{\a}(-))
\]
of functors 
\[
\mrm{D}(\opn{Mod} A) \to \mrm{D}(\opn{Mod} \widehat{B}),
\]
where $\widehat{B} := \Lambda_{\b}(B)$.
\end{cor}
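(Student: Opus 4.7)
The plan is to reduce both sides of the desired isomorphism to a common expression built from the telescope complex $\opn{Tel}(A;\mathbf{a})$, and then match them. All of the required ingredients are already at hand: Theorem \ref{thm:wide-llambda}, the base-change property $\opn{Tel}(B;\mathbf{b}) \cong \opn{Tel}(A;\mathbf{a}) \otimes_A B$, and the identification $\mrm{L}\Lambda_{\a}(-) \cong \opn{Hom}_A(\opn{Tel}(A;\mathbf{a}),-)$ recalled in the preliminaries.

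First I fix a finite sequence $\mathbf{a}$ generating $\a$ and let $\mathbf{b}$ denote its image in $B$, which then generates $\b$. Since $\b$ is weakly proregular by hypothesis, Theorem \ref{thm:wide-llambda} applied to the pair $(B,\b)$ gives, for any $N \in \mrm{D}(\opn{Mod} B)$,
$$\mrm{L}\widehat{\Lambda}_{\b}(N) \cong \mrm{R}\opn{Hom}_B(\widehat{B}\otimes_B \opn{Tel}(B;\mathbf{b}), N).$$
Substituting $N = \mrm{R}\opn{Hom}_A(B,M)$ for an arbitrary $M \in \mrm{D}(\opn{Mod} A)$ and invoking the derived hom-tensor adjunction along $A \to B$, namely $\mrm{R}\opn{Hom}_B(X,\mrm{R}\opn{Hom}_A(B,M)) \cong \mrm{R}\opn{Hom}_A(X,M)$ for $X \in \mrm{D}(\opn{Mod} B)$, produces
$$\mrm{L}\widehat{\Lambda}_{\b}\mrm{R}\opn{Hom}_A(B,M) \cong \mrm{R}\opn{Hom}_A(\widehat{B}\otimes_B \opn{Tel}(B;\mathbf{b}), M).$$

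Next, the base-change property identifies $\widehat{B}\otimes_B \opn{Tel}(B;\mathbf{b}) \cong \widehat{B}\otimes_A \opn{Tel}(A;\mathbf{a})$, so a second application of hom-tensor adjunction, this time over $A$, together with $\mrm{R}\opn{Hom}_A(\opn{Tel}(A;\mathbf{a}),M) \cong \mrm{L}\Lambda_{\a}(M)$, rewrites the right-hand side as $\mrm{R}\opn{Hom}_A(\widehat{B}, \mrm{L}\Lambda_{\a}(M))$. Chaining these isomorphisms yields the claim, and functoriality in $M$ is automatic from the naturality of each isomorphism used.

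No serious obstacle is expected: the telescope complexes are bounded complexes of countably generated free modules and hence K-projective, so the derived hom-tensor adjunctions invoked above can be taken on-the-nose at the level of complexes, with no auxiliary resolutions required. The possible non-flatness of $A \to B$ or $B \to \widehat{B}$ causes no trouble here, because these maps enter only through $\opn{Hom}$, never through tensor; the DG-algebra machinery of Theorems \ref{thm:wide-rgamma} and \ref{thm:wide-llambda} has already absorbed that difficulty in the background.
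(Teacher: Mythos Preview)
Your argument is correct and follows the paper's proof essentially step for step: apply Theorem~\ref{thm:wide-llambda} over $B$, use the derived hom--tensor adjunction along $A\to B$, invoke the base-change identification $\opn{Tel}(B;\mathbf{b})\cong B\otimes_A\opn{Tel}(A;\mathbf{a})$, and finish with a second adjunction over $A$ together with weak proregularity of $\a$. There is nothing to add.
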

\begin{proof}
Let $\mathbf{a}$ be a finite sequence that generates $\a$, and let $\mathbf{b}$ be its image in $B$. By Theorem \ref{thm:wide-llambda}, given $M \in \mrm{D}(\opn{Mod} A)$, there is a functorial isomorphism
\[
\mrm{L}\widehat{\Lambda}_{\b} \mrm{R}\opn{Hom}_A(B,M) \cong \mrm{R}\opn{Hom}_B( \widehat{B}\otimes_B \opn{Tel}(B;\mathbf{b}), \mrm{R}\opn{Hom}_A(B,M))
\]
so by the derived hom-tensor adjunction, 
\[
\mrm{L}\widehat{\Lambda}_{\b} \mrm{R}\opn{Hom}_A(B,M)  \cong \mrm{R}\opn{Hom}_A(\widehat{B}\otimes_B \opn{Tel}(B;\mathbf{b}),M).
\]
By the base change property of the telescope complex, we have that 
\[
\opn{Tel}(B;\mathbf{b}) \cong B\otimes_A \opn{Tel}(A;\mathbf{a}),
\]
so that
\[
\mrm{L}\widehat{\Lambda}_{\b} \mrm{R}\opn{Hom}_A(B,M)  \cong \mrm{R}\opn{Hom}_A(\widehat{B}\otimes_A \opn{Tel}(A;\mathbf{a}),M).
\]
Hence, using adjunction again, and the fact that $\a$ is weakly proregular, we obtain the result.
\end{proof}

Dually to these two corollaries, we have the following results which will apply to the study of Hochschild homology. We omit the very similar proofs.

\begin{cor}\label{cor:compare-adic-tor}
Let $A$ be a commutative ring, and let $\a\subseteq A$ be a weakly proregular ideal. Given a ring map $\widehat{A} \to B$, and a complex $M \in \mrm{D}(\opn{Mod} B)$,  there is an isomorphism
\[
M \otimes^{\mrm{L}}_A \mrm{R}\Gamma_{\a} (-) \cong M\otimes^{\mrm{L}}_{\widehat{A}} \mrm{R}\widehat{\Gamma}_{\a}(-)
\]
of functors 
\[
\mrm{D}(\opn{Mod} A) \to \mrm{D}(\opn{Mod} B).
\]
\end{cor}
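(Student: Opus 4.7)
The plan is to mirror the proof of Corollary \ref{cor:compare-adic-ext}, replacing Theorem \ref{thm:wide-llambda} by its dual Theorem \ref{thm:wide-rgamma} and the hom--tensor adjunction by associativity of the derived tensor product. Fix a finite sequence $\mathbf{a}$ generating $\a$, and let $N \in \mrm{D}(\opn{Mod} A)$ be arbitrary; I would evaluate both functors on $N$ and construct a functorial chain of isomorphisms between them in $\mrm{D}(\opn{Mod} B)$.

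Starting from the right-hand side and applying Theorem \ref{thm:wide-rgamma}, I would first rewrite
\[
M \otimes^{\mrm{L}}_{\widehat{A}} \mrm{R}\widehat{\Gamma}_{\a}(N) \cong M \otimes^{\mrm{L}}_{\widehat{A}} \left( \widehat{A} \otimes^{\mrm{L}}_A \left( \opn{K}^{\vee}_{\infty}(A;\mathbf{a}) \otimes_A N \right) \right).
\]
Next I would collapse the composition $M \otimes^{\mrm{L}}_{\widehat{A}} (\widehat{A} \otimes^{\mrm{L}}_A -)$ to $M \otimes^{\mrm{L}}_A (-)$ using associativity of the derived tensor product along $A \to \widehat{A} \to B$, obtaining
\[
M \otimes^{\mrm{L}}_{\widehat{A}} \mrm{R}\widehat{\Gamma}_{\a}(N) \cong M \otimes^{\mrm{L}}_A \left( \opn{K}^{\vee}_{\infty}(A;\mathbf{a}) \otimes_A N \right).
\]
Finally, because $\opn{K}^{\vee}_{\infty}(A;\mathbf{a})$ is a bounded complex of flat $A$-modules and $\a$ is weakly proregular, the functor $\opn{K}^{\vee}_{\infty}(A;\mathbf{a}) \otimes_A (-)$ represents $\mrm{R}\Gamma_{\a}(-)$, yielding the required identification with $M \otimes^{\mrm{L}}_A \mrm{R}\Gamma_{\a}(N)$.

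The only subtle point is the derived associativity in the middle step, since in our generality the map $A \to \widehat{A}$ need not be flat. This is handled by the standard observation that K-flatness is preserved under base change: for a K-flat resolution $P \to N$ over $A$, the complex $\widehat{A} \otimes_A P$ is automatically K-flat over $\widehat{A}$, so both sides of the associativity isomorphism reduce, after a further K-flat replacement of $M$ over $\widehat{A}$, to the ordinary tensor product $M \otimes_A P$. Alternatively, one could work through a K-flat DG-algebra resolution $A \to \widetilde{A} \to \widehat{A}$ as in the proof of Theorem \ref{thm:wide-rgamma} and then pass back using Proposition \ref{prop:qi-keep-info}(1); this is the technical obstacle the author presumably has in mind when calling the argument ``very similar'' to that of Corollary \ref{cor:compare-adic-ext}.
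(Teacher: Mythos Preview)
Your proposal is correct and matches exactly the argument the paper has in mind: the paper omits the proof, stating only that it is ``very similar'' to that of Corollary~\ref{cor:compare-adic-ext}, and your dualization---replacing Theorem~\ref{thm:wide-llambda} by Theorem~\ref{thm:wide-rgamma} and the derived hom--tensor adjunction by associativity of the derived tensor product---is precisely that dual argument. Your handling of the associativity step via K-flat base change is also the right justification (and in fact once $\widehat{A}\otimes_A P$ is K-flat over $\widehat{A}$ no further resolution of $M$ is needed).
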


\begin{cor}\label{cor:RGam-ofTen}
Let $A$ be a commutative ring, and let $B$ be a commutative $A$-algebra. Let $\a\subseteq A$ be a weakly proregular ideal, let $\b = \a \cdot B$, and assume that $\b$ is also weakly proregular. Then there is an isomorphism
\[
\mrm{R}\widehat{\Gamma}_{\b} (B\otimes^{\mrm{L}}_A -) \cong \widehat{B}\otimes^{\mrm{L}}_A \mrm{R}\Gamma_{\a}(-))
\]
of functors 
\[
\mrm{D}(\opn{Mod} A) \to \mrm{D}(\opn{Mod} \widehat{B}),
\]
where $\widehat{B} := \Lambda_{\b}(B)$.
\end{cor}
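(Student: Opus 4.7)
The plan is to mirror the proof of Corollary~\ref{cor:LLam-ofHom}, but dualized by interchanging $\mrm{R}\opn{Hom}$ with $\otimes^{\mrm{L}}$ and $\mrm{L}\widehat{\Lambda}$ with $\mrm{R}\widehat{\Gamma}$. The essential idea is that both functors $\mrm{R}\widehat{\Gamma}_{\b}(B\otimes^{\mrm{L}}_A -)$ and $\widehat{B}\otimes^{\mrm{L}}_A \mrm{R}\Gamma_{\a}(-)$ can be rewritten in terms of the infinite dual Koszul complex (via Theorem~\ref{thm:wide-rgamma} and the characterization of $\mrm{R}\Gamma_{\a}$ for weakly proregular ideals), after which the required identification reduces to the base change property of $\opn{K}^{\vee}_{\infty}$ together with the associativity of the derived tensor product.

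Concretely, I would fix a finite sequence $\mathbf{a}$ generating $\a$ and let $\mathbf{b}$ denote its image in $B$, which then generates $\b$. For $M \in \mrm{D}(\opn{Mod} A)$, Theorem~\ref{thm:wide-rgamma} applied to the $B$-algebra $B\to\widehat{B}$ yields a functorial isomorphism
\[
\mrm{R}\widehat{\Gamma}_{\b}\bigl(B\otimes^{\mrm{L}}_A M\bigr) \cong \widehat{B}\otimes^{\mrm{L}}_B \bigl(\opn{K}^{\vee}_{\infty}(B;\mathbf{b}) \otimes_B (B\otimes^{\mrm{L}}_A M)\bigr).
\]
The base change property of the infinite dual Koszul complex gives $\opn{K}^{\vee}_{\infty}(B;\mathbf{b}) \cong B\otimes_A \opn{K}^{\vee}_{\infty}(A;\mathbf{a})$, and since $\opn{K}^{\vee}_{\infty}(A;\mathbf{a})$ is a bounded complex of flat $A$-modules, this tensor product computes the derived tensor. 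Combining this with associativity of $\otimes^{\mrm{L}}$, the right-hand side above rewrites as
\[
\widehat{B}\otimes^{\mrm{L}}_B B \otimes^{\mrm{L}}_A \opn{K}^{\vee}_{\infty}(A;\mathbf{a}) \otimes^{\mrm{L}}_A M \;\cong\; \widehat{B}\otimes^{\mrm{L}}_A \bigl(\opn{K}^{\vee}_{\infty}(A;\mathbf{a}) \otimes^{\mrm{L}}_A M\bigr).
\]
Finally, weak proregularity of $\a$ gives $\opn{K}^{\vee}_{\infty}(A;\mathbf{a}) \otimes^{\mrm{L}}_A M \cong \mrm{R}\Gamma_{\a}(M)$, and we obtain the desired isomorphism $\mrm{R}\widehat{\Gamma}_{\b}(B\otimes^{\mrm{L}}_A M) \cong \widehat{B}\otimes^{\mrm{L}}_A \mrm{R}\Gamma_{\a}(M)$.

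I do not expect any serious obstacle here: the proof is essentially a chain of formal manipulations, and no additional DG-algebra resolution is needed beyond the one already hidden inside Theorem~\ref{thm:wide-rgamma}. The only point requiring care is bookkeeping of which ring the tensor product is taken over, which is why the flatness (hence, on the nose tensor product) of $\opn{K}^{\vee}_{\infty}(A;\mathbf{a})$ over $A$ is crucial in passing from $\otimes_B$ to $\otimes^{\mrm{L}}_A$. Once this is done, $\widehat{B}$-linearity of the resulting isomorphism is automatic, as both functors manifestly take values in $\mrm{D}(\opn{Mod} \widehat{B})$.
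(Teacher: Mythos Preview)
Your proposal is correct and is precisely the approach the paper has in mind: it explicitly omits the proof of this corollary as being the dual of Corollary~\ref{cor:LLam-ofHom}, and your argument carries out exactly that dualization, replacing $\mrm{R}\opn{Hom}$ by $\otimes^{\mrm{L}}$, the telescope complex by the infinite dual Koszul complex, and Theorem~\ref{thm:wide-llambda} by Theorem~\ref{thm:wide-rgamma}.
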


\section{Hochschild cohomology and derived completion}\label{section:main}

We now turn to the main theme of this paper: relations between adic completion (and its derived functor) and Hochschild cohomology. The results of this section rely heavily on the tools developed in the previous sections.

Our first result describes the effect of applying the derived completion functor to the Hochschild cohomology complex in a rather general situation. We will later specialize further to obtain more explicit results.

\begin{thm}\label{thm:LLambda-Of-Hochscild}
Let $\k$ be a commutative ring, let $A$ be a flat noetherian $\k$-algebra, and let $\a\subseteq A$ be an ideal. Assume further that at least one of the following holds:
\begin{enumerate}
\item The ring $\k$ is an absolutely flat ring (e.g., a field).
\item The ring $A/\a$ is essentially of finite type over $\k$.
\item The ideal $I := \a\otimes_{\k} A + A\otimes_{\k} \a \subseteq A\otimes_{\k} A$ is weakly proregular.
\end{enumerate}
Set $\widehat{A} := \Lambda_{\a}(A)$ and $A\widehat{\otimes}_{\k} A := \Lambda_I(A\otimes_{\k} A)$. Then there are isomorphisms
\[
\mrm{L}\widehat{\Lambda}_{\a} \mrm{R}\opn{Hom}_{A\otimes_{\k} A} (A,-) \cong \mrm{R}\opn{Hom}_{A\otimes_{\k} A}(\widehat{A},\mrm{L}\Lambda_I(-)) \cong \mrm{R}\opn{Hom}_{A\widehat{\otimes}_{\k} A}(\widehat{A},\mrm{L}\widehat{\Lambda}_I(-))
\]
of functors $\mrm{D}(\opn{Mod} A\otimes_{\k} A) \to \mrm{D}(\opn{Mod} \widehat{A})$.
\end{thm}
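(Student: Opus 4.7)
The plan is to obtain both isomorphisms as essentially one-step applications of Corollary~\ref{cor:LLam-ofHom} and Corollary~\ref{cor:compare-adic-ext}, after first establishing weak proregularity of $I$. For the latter: under~(3) there is nothing to prove; under~(1) apply Proposition~\ref{prop:wpr-over-abflat} with $B=A$ and $\b=\a$ (noting that $\a$ is weakly proregular because $A$ is noetherian); under~(2) apply Theorem~\ref{thm:formally-finite-type-wpr} with $B=A$ and $\b=\a$. In all three cases $I$ is then a finitely generated weakly proregular ideal of $A\otimes_{\k}A$, so the two corollaries of Section~\ref{section:widehat} become available.

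For the first isomorphism I would invoke Corollary~\ref{cor:LLam-ofHom} with base ring $A\otimes_{\k}A$, ideal $I$, and $(A\otimes_{\k}A)$-algebra $A$, where the algebra structure is given by the multiplication map $\mu:A\otimes_{\k}A\to A$. The point is the identity
\[
I\cdot A = \mu(\a\otimes_{\k}A) + \mu(A\otimes_{\k}\a) = \a,
\]
which is weakly proregular (since $A$ is noetherian) and whose completion is precisely $\Lambda_{\a}(A)=\widehat{A}$. The corollary then immediately yields
\[
\mrm{L}\widehat{\Lambda}_{\a}\,\mrm{R}\opn{Hom}_{A\otimes_{\k}A}(A,-) \cong \mrm{R}\opn{Hom}_{A\otimes_{\k}A}(\widehat{A},\mrm{L}\Lambda_I(-)).
\]

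For the second isomorphism I would invoke Corollary~\ref{cor:compare-adic-ext} with base ring $A\otimes_{\k}A$ and ideal $I$, taking $B=\widehat{A}$ (which acquires an $(A\widehat{\otimes}_{\k}A)$-algebra structure via the $I$-adic completion of $\mu$, well defined because $\mu(I)=\a$) and $M=\widehat{A}\in\mrm{D}(\opn{Mod} B)$. This produces
\[
\mrm{R}\opn{Hom}_{A\otimes_{\k}A}(\widehat{A},\mrm{L}\Lambda_I(-)) \cong \mrm{R}\opn{Hom}_{A\widehat{\otimes}_{\k}A}(\widehat{A},\mrm{L}\widehat{\Lambda}_I(-)),
\]
and composing with the first isomorphism gives the full statement. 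I do not anticipate any real obstacle at this stage: the substantial technical work lives in Section~\ref{section:widehat}, and the only conceptual observation required here is that the multiplication map collapses $I$ onto $\a$, which is exactly what is needed to align the general DG-completion framework with the Hochschild-cohomology setup.
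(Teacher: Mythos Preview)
Your proposal is correct and matches the paper's own proof essentially step for step: reduce to weak proregularity of $I$ via Proposition~\ref{prop:wpr-over-abflat} or Theorem~\ref{thm:formally-finite-type-wpr}, then apply Corollary~\ref{cor:LLam-ofHom} (using $I\cdot A=\a$) for the first isomorphism and Corollary~\ref{cor:compare-adic-ext} for the second. The only point the paper phrases slightly differently is that it observes ``the $I$-adic completion of $A$ equals $\widehat{A}$'' where you speak of completing $\mu$, but this is the same content.
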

\begin{proof}
If $\k$ is absolutely flat, then $I$ is weakly proregular by Proposition \ref{prop:wpr-over-abflat}, while if $A/\a$ is essentially of finite type over $\k$, $I$ is weakly proregular by Theorem \ref{thm:formally-finite-type-wpr}. Since the image of $I$ in $A$ is equal to $\a$, and since because $A$ is noetherian, $\a$ is weakly proregular, given $M \in \mrm{D}(\opn{Mod} A\otimes_{\k} A)$, by Corollary \ref{cor:LLam-ofHom}, there is a functorial isomorphism
\[
\mrm{L}\widehat{\Lambda}_{\a} \mrm{R}\opn{Hom}_{A\otimes_{\k} A} (A,M) \cong \mrm{R}\opn{Hom}_{A\otimes_{\k} A}(\widehat{A},\mrm{L}\Lambda_I(M)).
\]
Note that, considered as an $A\otimes_{\k} A$-module, the $I$-adic completion of $A$ is equal to $\widehat{A}$. Hence, we may apply Corollary \ref{cor:compare-adic-ext}, and deduce that there is a functorial isomorphism
\[
\mrm{R}\opn{Hom}_{A\otimes_{\k} A}(\widehat{A},\mrm{L}\Lambda_I(M)) \cong 
\mrm{R}\opn{Hom}_{A\widehat{\otimes}_{\k} A}(\widehat{A},\mrm{L}\widehat{\Lambda}_I(M))
\]
in $\mrm{D}(\opn{Mod} \widehat{A})$. This proves the result.
\end{proof}

The next lemma might seem trivial at first glance. However, possible lack of flatness of the completion map, makes it a little more difficult.

\begin{lem}\label{lem:wide-Lambda-Of-Complete}
Let $A$ be a commutative ring, and let $\a\subseteq A$ be a weakly proregular ideal. Assume that $\widehat{A} := \Lambda_{\a}(A)$ is noetherian, and let $M \in \opn{Mod} \widehat{A}$ be an $\a$-adically complete $\widehat{A}$-module. Let 
\[
\opn{Rest}_{\widehat{A}/A}:\mrm{D}(\opn{Mod} \widehat{A}) \to \mrm{D}(\opn{Mod} A)
\]
be the forgetful functor. Then there is a functorial isomorphism
\[
\mrm{L}\Lambda_{\a} ( \opn{Rest}_{\widehat{A}/A}(M) ) \cong \opn{Rest}_{\widehat{A}/A}(M)
\]
in $\mrm{D}(\opn{Mod} A)$,
and a functorial isomorphism
\[
\mrm{L}\widehat{\Lambda}_{\a} ( \opn{Rest}_{\widehat{A}/A}(M) ) \cong M.
\]
in $\mrm{D}(\opn{Mod} \widehat{A})$.
\end{lem}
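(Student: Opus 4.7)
The plan is to prove both isomorphisms by explicit telescope computations: both derived completions will be identified with $\Lambda_{\a}(M)$, and the hypothesis that $M$ is $\a$-adically complete finishes the job. The auxiliary observation that makes this clean is that $\a^n M=\widehat{\a}^n M$ inside the $\widehat{A}$-module $M$, so being $\a$-adically complete over $A$ coincides with being $\widehat{\a}$-adically complete over $\widehat{A}$; in particular $\Lambda_{\a}(M)=\Lambda_{\widehat{\a}}(M)=M$.

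For the first isomorphism, I would fix a finite sequence $\mathbf{a}$ generating $\a$ and invoke the formula $\mrm{L}\Lambda_{\a}(-)\cong\opn{Hom}_A(\opn{Tel}(A;\mathbf{a}),-)$ recalled in Section~\ref{section:review}, together with the quasi-isomorphism $\beta_M\colon\opn{Hom}_A(\opn{Tel}(A;\mathbf{a}),M)\to\Lambda_{\a}(M)$ from \cite[Corollary~5.23]{PSY1}, to conclude $\mrm{L}\Lambda_{\a}(\opn{Rest}_{\widehat{A}/A}(M))\cong\Lambda_{\a}(M)=M$ in $\mrm{D}(\opn{Mod} A)$.

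For the second isomorphism, I would start from Theorem~\ref{thm:wide-llambda}, which gives
\[
\mrm{L}\widehat{\Lambda}_{\a}(\opn{Rest}_{\widehat{A}/A}(M))\cong\mrm{R}\opn{Hom}_A(\widehat{A}\otimes_A\opn{Tel}(A;\mathbf{a}),M).
\]
The base-change identity $\widehat{A}\otimes_A\opn{Tel}(A;\mathbf{a})\cong\opn{Tel}(\widehat{A};\widehat{\mathbf{a}})$ recalled in Section~\ref{section:review} is a derived base change because $\opn{Tel}(A;\mathbf{a})$ is K-flat over $A$, so the tensor-hom adjunction for the ring map $A\to\widehat{A}$ produces a natural $\widehat{A}$-linear isomorphism
\[
\mrm{R}\opn{Hom}_A(\opn{Tel}(\widehat{A};\widehat{\mathbf{a}}),M)\cong\mrm{R}\opn{Hom}_{\widehat{A}}(\opn{Tel}(\widehat{A};\widehat{\mathbf{a}}),M).
\]
Since $\widehat{A}$ is noetherian, $\widehat{\a}$ is weakly proregular, so re-running the first paragraph's argument over $\widehat{A}$ identifies the right-hand side with $\mrm{L}\Lambda^{\widehat{A}}_{\widehat{\a}}(M)\cong\Lambda_{\widehat{\a}}(M)=M$ in $\mrm{D}(\opn{Mod}\widehat{A})$. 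The one step that requires care is the bridge from $\mrm{R}\opn{Hom}_A$ to $\mrm{R}\opn{Hom}_{\widehat{A}}$, which genuinely uses both the K-flatness of the telescope and the $\widehat{A}$-module structure on $M$; everything else is a straightforward application of the previously established formulas.
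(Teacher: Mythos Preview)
Your argument has a genuine gap in the first paragraph. The citation of \cite[Corollary~5.23]{PSY1} does not give you that $\beta_M$ is a quasi-isomorphism for an arbitrary module $M$: that result applies to K-flat complexes, which is why the formula $\mrm{L}\Lambda_{\a}(-)\cong\opn{Hom}_A(\opn{Tel}(A;\mathbf{a}),-)$ holds on the derived category (resolve, then apply $\beta$). For a module $M$ that is merely $\a$-adically complete over a non-noetherian $A$, it is \emph{not} automatic that $\mrm{L}\Lambda_{\a}(M)\cong\Lambda_{\a}(M)$; adically complete modules can fail to be cohomologically complete. Notice that your first paragraph never invokes the hypothesis that $\widehat{A}$ is noetherian, which is a red flag. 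The paper closes this gap by working first over $\widehat{A}$: since $\widehat{A}$ is noetherian, \cite[Theorem~1.21]{PSY2} gives $M\cong\mrm{R}\opn{Hom}_{\widehat{A}}(\opn{Tel}(\widehat{A};\widehat{\mathbf{a}}),M)$, and then the correct adjunction $\mrm{R}\opn{Hom}_{\widehat{A}}(\widehat{A}\otimes_A\opn{Tel}(A;\mathbf{a}),M)\cong\mrm{R}\opn{Hom}_A(\opn{Tel}(A;\mathbf{a}),\opn{Rest}_{\widehat{A}/A}(M))$ transfers this down to $A$.

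There is a second problem in your treatment of $\mrm{L}\widehat{\Lambda}_{\a}$. The isomorphism you write,
\[
\mrm{R}\opn{Hom}_A(\opn{Tel}(\widehat{A};\widehat{\mathbf{a}}),M)\cong\mrm{R}\opn{Hom}_{\widehat{A}}(\opn{Tel}(\widehat{A};\widehat{\mathbf{a}}),M),
\]
is not what tensor--hom adjunction produces: adjunction gives $\mrm{R}\opn{Hom}_{\widehat{A}}(\widehat{A}\otimes^{\mrm{L}}_A X,M)\cong\mrm{R}\opn{Hom}_A(X,\opn{Rest}_{\widehat{A}/A}(M))$ for $X\in\mrm{D}(\opn{Mod} A)$, and taking $X=\opn{Tel}(A;\mathbf{a})$ yields $\mrm{R}\opn{Hom}_{\widehat{A}}(\opn{Tel}(\widehat{A};\widehat{\mathbf{a}}),M)\cong\mrm{L}\Lambda_{\a}(\opn{Rest}_{\widehat{A}/A}(M))$, not the formula coming out of Theorem~\ref{thm:wide-llambda}. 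Indeed $\opn{Tel}(\widehat{A};\widehat{\mathbf{a}})$ is not K-projective over $A$, so the two $\mrm{R}\opn{Hom}$'s differ. The paper instead deduces the second isomorphism from the first: since $\opn{Rest}_{\widehat{A}/A}\circ\mrm{L}\widehat{\Lambda}_{\a}\cong\mrm{L}\Lambda_{\a}$, the first part forces $\mrm{L}\widehat{\Lambda}_{\a}(\opn{Rest}_{\widehat{A}/A}(M))$ to be concentrated in degree $0$, and then the natural $\widehat{A}$-linear map $\widehat{\Lambda}_{\a}(P)\to M$ (for $P\to\opn{Rest}_{\widehat{A}/A}(M)$ a projective resolution) is the required quasi-isomorphism.
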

\begin{proof}
Let $\mathbf{a}$ be a finite sequence that generates $\a$, and let $\widehat{\mathbf{a}}$ be its image in $\widehat{A}$. Since $\widehat{A}$ is noetherian and $M$ is $\a$-adically complete, it follows from \cite[Theorem 1.21]{PSY2} that $M$ is cohomologically $\a\widehat{A}$-adically complete, so there is an isomorphism
\[
M \cong \mrm{R}\opn{Hom}_{\widehat{A}}(\opn{Tel}(\widehat{A};\widehat{\mathbf{a}}),M).
\]
The base change property of the telescope complex implies that $\opn{Tel}(A;\mathbf{a}) \otimes_A \widehat{A} \cong \opn{Tel}(\widehat{A};\widehat{\mathbf{a}})$. Using this fact and the Hom-tensor adjunction, we deduce that
\[
\opn{Rest}_{\widehat{A}/A}(M) \cong \mrm{R}\opn{Hom}_A(\opn{Tel}(A;\mathbf{a}),\opn{Rest}_{\widehat{A}/A}(M)),
\]
so that 
\[
\mrm{L}\Lambda_{\a} ( \opn{Rest}_{\widehat{A}/A}(M) ) \cong \opn{Rest}_{\widehat{A}/A}(M).
\]
Note that
\[
\opn{Rest}_{\widehat{A}/A}( \mrm{L}\widehat{\Lambda}_{\a} (\opn{Rest}_{\widehat{A}/A}(M) ) ) \cong \mrm{L}\Lambda_{\a}(\opn{Rest}_{\widehat{A}/A}(M)) \cong \opn{Rest}_{\widehat{A}/A}(M). 
\]
Hence, the complex of $\widehat{A}$-modules 
\[
\mrm{L}\widehat{\Lambda}_{\a}(\opn{Rest}_{\widehat{A}/A}(M))
\]
is concentrated in degree $0$. Letting $P \to \opn{Rest}_{\widehat{A}/A}(M)$ be a projective resolution, we deduce that the map 
\[
\widehat{\Lambda}_{\a}(P) \to \widehat{\Lambda}_{\a}(\opn{Rest}_{\widehat{A}/A}(M)) \cong M
\]
is an $\widehat{A}$-linear quasi-isomorphism, so the result follows from the fact that 
\[
\mrm{L}\widehat{\Lambda}_{\a} (\opn{Rest}_{\widehat{A}/A}(M)) \cong \widehat{\Lambda}_{\a}(P).
\]
\end{proof}

Using this lemma, and as a first corollary of Theorem \ref{thm:LLambda-Of-Hochscild}, we obtain the next result which reduces the problem of computing the Hochschild cohomology of an adically complete ring to a problem over noetherian rings.

\begin{cor}\label{cor:adic-hh-is-hh}
Let $\k$ be a commutative ring, and let $A$ be a flat noetherian $\k$-algebra. Assume $\a\subseteq A$ is an ideal, such that $A$ is $\a$-adically complete, and such that $A/\a$ is essentially of finite type over $\k$. Let $I = \a\otimes_{\k} A + A\otimes_{\k} \a$, and set $A\widehat{\otimes}_{\k} A := \Lambda_I(A\otimes_{\k} A)$. Then for any $M \in \opn{Mod} A\otimes_{\k} A$ which is $I$-adically complete (for example, any $\a$-adically complete $A$-module, or more particularly, any finitely generated $A$-module), there is a functorial isomorphism
\[
\mrm{R}\opn{Hom}_{A\otimes_{\k} A} (A,M) \cong \mrm{R}\opn{Hom}_{A\widehat{\otimes}_{\k} A}(A,M)
\]
in $\mrm{D}(\opn{Mod} A)$, and the ring $A\widehat{\otimes}_{\k} A$ is noetherian.
\end{cor}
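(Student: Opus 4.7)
The plan is to reduce the isomorphism to a direct application of Corollary \ref{cor:compare-adic-ext} combined with Lemma \ref{lem:wide-Lambda-Of-Complete}, after first checking two preparatory facts: the weak proregularity of $I$ and the noetherianness of $A\widehat{\otimes}_{\k} A$.

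For the preliminary facts: weak proregularity of $I$ is immediate from Theorem \ref{thm:formally-finite-type-wpr} applied with $B = A$ and $\b = \a$. For the noetherianness of $A\widehat{\otimes}_{\k} A = \Lambda_I(A\otimes_{\k} A)$, I would note that $(A\otimes_{\k} A)/I \cong (A/\a)\otimes_{\k} (A/\a)$; since $A/\a$ is noetherian (as a quotient of the noetherian ring $A$) and essentially of finite type over $\k$, we may write $A/\a \cong S^{-1}(\k[x_1,\dots,x_n]/J)$, so $(A/\a)\otimes_{\k}(A/\a)$ is a localization of a finitely generated algebra over the noetherian ring $A/\a$, hence noetherian by Hilbert's basis theorem. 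The fact recalled in Section \ref{section:review} that $\Lambda_{\mathfrak{c}}(R)$ is noetherian whenever $R/\mathfrak{c}$ is then yields the noetherianness of $A\widehat{\otimes}_{\k} A$.

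For the main isomorphism, I would invoke Corollary \ref{cor:compare-adic-ext} applied to the ring $A\otimes_{\k} A$ with the weakly proregular ideal $I$. The key observation is that $A$ is naturally an $A\widehat{\otimes}_{\k} A$-module: the multiplication map $A\otimes_{\k} A \to A$ factors through $A\widehat{\otimes}_{\k} A$, because $A$, being $\a$-adically complete, is also $I$-adically complete when regarded as an $A\otimes_{\k} A$-module (the image of $I$ in $A$ is $\a$). Placing $A$ in the first slot of the corollary yields, functorially in $N \in \mrm{D}(\opn{Mod}(A\otimes_{\k} A))$,
\[
\mrm{R}\opn{Hom}_{A\otimes_{\k} A}(A,\mrm{L}\Lambda_I(N)) \cong \mrm{R}\opn{Hom}_{A\widehat{\otimes}_{\k} A}(A,\mrm{L}\widehat{\Lambda}_I(N)).
\]
Specializing $N := M$ and applying Lemma \ref{lem:wide-Lambda-Of-Complete} with $(A\otimes_{\k} A, I)$ in place of $(A,\a)$ --- whose hypotheses (weak proregularity of $I$, noetherianness of $A\widehat{\otimes}_{\k} A$, and $I$-adic completeness of $M$) are exactly the facts established above plus the standing assumption --- gives $\mrm{L}\Lambda_I(M) \cong M$ and $\mrm{L}\widehat{\Lambda}_I(M) \cong M$, and combining these with the previous display produces the desired isomorphism.

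Finally, I would verify the parenthetical examples. An $\a$-adically complete $A$-module $M$, regarded as a symmetric $A\otimes_{\k} A$-module via $(a\otimes b)\cdot m := abm$, satisfies $I^n M = \a^n M$ for every $n$, so its $I$-adic completeness coincides with its $\a$-adic completeness. Any finitely generated $A$-module is automatically $\a$-adically complete since $A$ is noetherian with $\widehat{A} = A$, so $\Lambda_\a(M) \cong \widehat{A}\otimes_A M \cong M$. The principal obstacle in the whole argument is not a deep mathematical one but rather a bookkeeping issue: carefully tracking the various $(A\otimes_{\k} A)$-, $(A\widehat{\otimes}_{\k} A)$-, and $A$-module structures on $A$ and on $M$, so that the completeness hypotheses of Corollary \ref{cor:compare-adic-ext} and Lemma \ref{lem:wide-Lambda-Of-Complete} are actually in force. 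Once this is in place, the statement is a direct assembly of the tools developed in Sections \ref{section:WPR} and \ref{section:widehat}.
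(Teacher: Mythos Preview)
Your proof is correct and essentially follows the paper's approach: the paper invokes Theorem \ref{thm:LLambda-Of-Hochscild} (whose second isomorphism is precisely your application of Corollary \ref{cor:compare-adic-ext}) and then Lemma \ref{lem:wide-Lambda-Of-Complete}, so your direct appeal to Corollary \ref{cor:compare-adic-ext} simply unwinds that citation in the special case $A=\widehat{A}$. The noetherianness argument and the verification of the parenthetical examples are also handled the same way.
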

\begin{proof}
That $A\widehat{\otimes}_{\k} A$ is noetherian follows from the fact that $(A\otimes_{\k} A)/I \cong A/\a\otimes_{\k} A/\a$ is noetherian, and $I$ is finitely generated. Since $A= \widehat{A}$, according to Theorem \ref{thm:LLambda-Of-Hochscild}, there is a functorial isomorphism
\[
\mrm{R}\opn{Hom}_{A\otimes_{\k} A}(A,\mrm{L}\Lambda_I(M)) \cong \mrm{R}\opn{Hom}_{A\widehat{\otimes}_{\k} A}(A,\mrm{L}\widehat{\Lambda}_I(M)).
\]
By Lemma \ref{lem:wide-Lambda-Of-Complete}, we have that 
\[
\mrm{L}\Lambda_I(M) \cong M
\]
in $\mrm{D}(\opn{Mod} A\otimes_{\k} A)$, and
\[
\mrm{L}\widehat{\Lambda}_I(M) \cong M
\]
in $\mrm{D}(\opn{Mod} A\widehat{\otimes}_{\k} A)$. Using these two facts, we obtain the functorial isomorphism
\[
\mrm{R}\opn{Hom}_{A\otimes_{\k} A} (A,M) \cong \mrm{R}\opn{Hom}_{A\widehat{\otimes}_{\k} A}(A,M).
\]
\end{proof}

\begin{rem}
In \cite[Section 3, Page 113]{BF}, the authors defined the analytic Hochschild cohomology and discussed its relation to ordinary Hochschild cohomology. The setup there is as follows: $(A,\mfrak{m}_A)$ and $(B,\mfrak{m}_B)$ are two complete noetherian local rings, and there is a flat local map $A\to B$ such that the induced map $A/\mfrak{m}_A \to B/\mfrak{m}_B$ is an isomorphism. In this situation, the authors defined the analytic bar resolution $\hat{\mathcal{B}}$ by replacing $B^{\otimes^n_A}$ with its completion  with respect to the maximal ideal $\ker(B^{\otimes^n_A} \to B/\mfrak{m}_B)$ in the ordinary bar resolution. Using this complex, the authors defined the analytic Hochschild cohomology by 
\[
\widehat{\mrm{HH}}^n(B/A,M) := H^n \opn{Hom}_{B\widehat{\otimes}_A B}(\hat{\mathcal{B}},M),
\]
observed that there is a canonical map 
\[
\widehat{\mrm{HH}}^n(B/A,M) \to\mrm{HH}^n(B/A,M),
\]
and asked if these modules are isomorphic. Using Corollary \ref{cor:adic-hh-is-hh} and the results of \cite{BF}, we may now obtain a positive answer to this question:
\end{rem}

\begin{cor}\label{cor:Answer-To_BF}
Let $(A,\mfrak{m}_A) \to (B,\mfrak{m}_B)$ be a flat local homomorphism of complete noetherian local rings, such that the induced map of residue fields $A/\mfrak{m}_A \to B/\mfrak{m}_B$ is an isomorphism. Then for any $\mfrak{m}_B$-adically complete $B$-module $M$ (in particular, for any finitely generated $B$-module $M$), there is a natural $B$-module isomorphism
\[
\widehat{\mrm{HH}}^n(B/A,M) \cong \mrm{HH}^n(B/A,M).
\]
\end{cor}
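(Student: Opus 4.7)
The plan is to reduce the question, via Corollary \ref{cor:adic-hh-is-hh}, to an identification that is essentially established in \cite{BF}. First, I would check that the hypotheses of that corollary apply to the data $(A,B,\mfrak{m}_B)$, with the base $A$ playing the role of $\k$: the map $A\to B$ is flat by assumption, $B$ is a noetherian $A$-algebra, $B$ is $\mfrak{m}_B$-adically complete, and $B/\mfrak{m}_B\cong A/\mfrak{m}_A$ is a cyclic $A$-module, in particular essentially of finite type over $A$. With $I:=\mfrak{m}_B\otimes_A B + B\otimes_A \mfrak{m}_B$, the $I$-adic topology induced on any $\mfrak{m}_B$-adically complete $B$-module $M$ (via either $B$-action) agrees with the $\mfrak{m}_B$-adic one, so such $M$ is automatically $I$-adically complete. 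Setting $B\widehat{\otimes}_A B := \Lambda_I(B\otimes_A B)$, which is noetherian by the corollary, we obtain a functorial isomorphism
\[
\mrm{R}\opn{Hom}_{B\otimes_A B}(B,M) \cong \mrm{R}\opn{Hom}_{B\widehat{\otimes}_A B}(B,M)
\]
in $\mrm{D}(\opn{Mod} B)$.

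To translate this derived-level statement into the asserted isomorphism of classical cohomology groups, I would invoke \cite{BF} to identify each side with the appropriate Hochschild theory. The analytic bar complex $\hat{\mathcal{B}}$ is by construction a resolution of $B$ over $B\widehat{\otimes}_A B$ obtained by completing each $B^{\otimes^n_A}$ at the kernel of the map to $B/\mfrak{m}_B$, and the arguments of that paper show it computes $\opn{Ext}^n_{B\widehat{\otimes}_A B}(B,M)$, so that $\widehat{\mrm{HH}}^n(B/A,M) \cong \opn{Ext}^n_{B\widehat{\otimes}_A B}(B,M)$. The parallel identification $\mrm{HH}^n(B/A,M) \cong \opn{Ext}^n_{B\otimes_A B}(B,M)$ in this complete-local flat setup is likewise supplied by \cite{BF}. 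Taking $H^n$ of the derived isomorphism above and composing with these two identifications yields the natural isomorphism $\widehat{\mrm{HH}}^n(B/A,M)\cong \mrm{HH}^n(B/A,M)$, and naturality is inherited from both ingredients.

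The real difficulty lies entirely in the preparatory work, not in this corollary itself: Corollary \ref{cor:adic-hh-is-hh} rests on the weak proregularity of $I$ proved in Theorem \ref{thm:formally-finite-type-wpr} (which in turn required Proposition \ref{wpr-prop} and the noetherianity of $\Lambda_{I_1}(B\otimes_A B)$), on the DG-algebra description of $\mrm{L}\widehat{\Lambda}$ and the base-change result Corollary \ref{cor:LLam-ofHom} developed in Section \ref{section:widehat}, and on the completeness-invariance statement Lemma \ref{lem:wide-Lambda-Of-Complete}. Once that machinery and the bar-resolution identifications of \cite{BF} are in place, the deduction of the corollary is essentially formal.
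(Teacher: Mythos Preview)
Your proposal is correct and follows essentially the same route as the paper's own proof: verify the hypotheses of Corollary~\ref{cor:adic-hh-is-hh} for $(\k,A,\a)=(A,B,\mfrak{m}_B)$ to obtain $\mrm{R}\opn{Hom}_{B\otimes_A B}(B,M)\cong\mrm{R}\opn{Hom}_{B\widehat{\otimes}_A B}(B,M)$, then invoke \cite[Propositions~3.1 and~3.2]{BF} to identify the two Ext groups with $\mrm{HH}^n$ and $\widehat{\mrm{HH}}^n$ respectively. The only cosmetic difference is that the paper phrases the completeness hypothesis for \cite[Proposition~3.1]{BF} in terms of the maximal ideal $\ker(B\otimes_A B\to B/\mfrak{m}_B)$ rather than $I$, but these define the same adic topology on $M$.
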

\begin{proof}
Note that $M$ is $\ker(B\otimes_A B \to B/\mfrak{m}_B)$-adically complete, so by \cite[Proposition 3.1]{BF}, the natural map
\[
\mrm{HH}^n(B/A,M) \to \opn{Ext}^n_{B\otimes_A B}(B,M)
\]
is an isomorphism. Similarly, by \cite[Proposition 3.2]{BF}, the natural map
\[
\widehat{\mrm{HH}}^n(B/A,M) \to \opn{Ext}^n_{B\widehat{\otimes}_A B}(B,M)
\]
is an isomorphism. Finally, by Corollary \ref{cor:adic-hh-is-hh}, there is a natural isomorphism
\[
\opn{Ext}^n_{B\otimes_A B}(B,M) \cong  \opn{Ext}^n_{B\widehat{\otimes}_A B}(B,M),
\]
so the result follows.
\end{proof}

Our next goal is to give the complex appearing in Theorem \ref{thm:LLambda-Of-Hochscild} a description in terms of the Hochschild complex of the ring $\widehat{A}$. First we need a lemma.

\begin{lem}\label{lem:completion-of-tensor-of-completion}
Let $\k$ be a commutative ring,  let $A$ be a flat noetherian $\k$-algebra, and let $\a\subseteq A$ be an ideal. Let 
\[
I :=\a\otimes_{\k} A + A\otimes_{\k} \a \subseteq A\otimes_{\k} A,
\]
let $\widehat{\a} := \a\cdot \Lambda_{\a}(A)$, and let 
\[
J = \widehat{\a} \otimes_{\k} \widehat{A} + \widehat{A}\otimes_{\k} \widehat{\a} = I \cdot (\widehat{A}\otimes_{\k} \widehat{A}) \subseteq \widehat{A}\otimes_{\k} \widehat{A}.
\]
If $\tau_A : A \to \widehat{A}$ is the completion map, then the ring map
\[
\Lambda_I(\tau_A \otimes_k \tau_A) : \Lambda_I(A\otimes_{\k} A) \to \Lambda_J(\widehat{A}\otimes_{\k} \widehat{A})
\]
is an isomorphism.
\end{lem}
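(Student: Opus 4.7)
The plan is to show that both sides are canonically identified with the inverse limit $\varprojlim_m (A/\a^m \otimes_\k A/\a^m)$, and that the connecting map $\Lambda_I(\tau_A\otimes_\k\tau_A)$ realizes this identification, making it an isomorphism.

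The first step is a cofinality argument. Setting $I_m := \a^m \otimes_\k A + A \otimes_\k \a^m$, I would verify that the families of ideals $\{I^m\}_m$ and $\{I_m\}_m$ are cofinal in $A\otimes_\k A$. The inclusion $I_m \subseteq I^m$ follows from $(\a\otimes_\k A)^m = \a^m \otimes_\k A \subseteq I^m$ and the symmetric computation. Conversely, since $I^m$ is generated by elements $x\otimes y$ with $x\in \a^i$, $y\in \a^j$ and $i+j=m$, one gets $I^{2m} \subseteq I_m$ because for any pair with $i+j=2m$ either $i\geq m$ or $j\geq m$. It follows that $\Lambda_I(A\otimes_\k A) = \varprojlim_m (A\otimes_\k A)/I_m$. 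The identical argument with $\widehat{\a}$ in place of $\a$ yields $\Lambda_J(\widehat{A}\otimes_\k \widehat{A}) = \varprojlim_m (\widehat{A}\otimes_\k \widehat{A})/J_m$, where $J_m := \widehat{\a}^m \otimes_\k \widehat{A} + \widehat{A} \otimes_\k \widehat{\a}^m$.

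The second step is to identify each quotient of the tower. Using only right-exactness of $-\otimes_\k -$ (no flatness needed), the kernel of $A\otimes_\k A \twoheadrightarrow A/\a^m \otimes_\k A/\a^m$ is exactly $I_m$, giving a canonical isomorphism $(A\otimes_\k A)/I_m \cong A/\a^m \otimes_\k A/\a^m$; similarly $(\widehat{A}\otimes_\k \widehat{A})/J_m \cong \widehat{A}/\widehat{\a}^m \otimes_\k \widehat{A}/\widehat{\a}^m$. At this point the third step is immediate: because $A$ is noetherian, $\tau_A$ induces an isomorphism $A/\a^m \xrightarrow{\simeq} \widehat{A}/\widehat{\a}^m$ for every $m$ (since $\widehat{\a}^m = \a^m\widehat{A}$ and $\widehat{A}/\a^m\widehat{A} \cong A/\a^m$). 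Tensoring these isomorphisms over $\k$ and passing to the inverse limit gives an isomorphism
\[
\varprojlim_m (A/\a^m \otimes_\k A/\a^m) \xrightarrow{\simeq} \varprojlim_m (\widehat{A}/\widehat{\a}^m \otimes_\k \widehat{A}/\widehat{\a}^m).
\]

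Finally, I would check naturality: the map $\Lambda_I(\tau_A\otimes_\k \tau_A)$ is by construction the inverse limit of the quotient maps $(A\otimes_\k A)/I^m \to (\widehat{A}\otimes_\k\widehat{A})/J^m$, which under the identifications above become precisely $(\tau_A \bmod \a^m) \otimes (\tau_A \bmod \a^m)$. Hence $\Lambda_I(\tau_A\otimes_\k\tau_A)$ coincides with the displayed isomorphism. The main obstacle is really just the cofinality step — once one realizes that powers of $I$ and powers of the simpler ideals $I_m$ generate the same topology, the remainder is standard bookkeeping with quotients and inverse limits, and the flatness of $A$ over $\k$ is not needed in this lemma.
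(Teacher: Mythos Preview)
Your argument is correct and essentially identical to the paper's: both establish cofinality of $\{I^m\}$ with $\{I_m := \a^m\otimes_\k A + A\otimes_\k \a^m\}$ via the inclusions $I_m \subseteq I^m$ and $I^{2m}\subseteq I_m$, identify the quotients with $A/\a^m\otimes_\k A/\a^m$, and then use $A/\a^m \cong \widehat{A}/\widehat{\a}^m$ to match the two inverse systems. Your remark that flatness of $A$ over $\k$ is not actually needed here is correct; the paper invokes flatness of $\widehat{A}$ over $\k$ before repeating the cofinality step on the $\widehat{A}$-side, but the argument goes through without it.
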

\begin{proof}
Notice that for each $n$ we have that
\[
I^n = \sum_{i=0}^n \a^i \otimes \a^{n-i},
\]
where we have set $\a^0 = A$. Hence, we have that 
\[
\a^{2n} \otimes_{\k} A + A\otimes_{\k} \a^{2n} \subseteq I^{2n} \subseteq \a^n\otimes_{\k} A + A\otimes_{\k} \a^n.
\]
It follows that the two sequences of ideals $\{I^n\}$ and $\{\a^n\otimes_{\k} A + A\otimes_{\k} \a^n\}$ are cofinal in each other, so that
\[
\varprojlim (A\otimes_{\k} A)/I^n \cong \varprojlim (A\otimes_{\k} A)/(\a^n\otimes_{\k} A + A\otimes_{\k} \a^n) \cong \varprojlim A/{\a}^n\otimes_{\k} A/{\a}^n.
\]
Since $A$ is noetherian, $\widehat{A}$ is flat over $A$, so it is also flat over $\k$. Hence, in the exact same manner,
\[
\varprojlim (\widehat{A}\otimes_{\k} \widehat{A})/J^n \cong \varprojlim \widehat{A}/{(\widehat{\a}})^n\otimes_{\k} A/{(\widehat{\a})}^n.
\]
Hence, the result follows from the fact that $\widehat{A} / (\widehat{\a})^n \cong A/{\a}^n$, and from the observation that the maps $\Lambda_I(\tau_A \otimes_k \tau_A)$
and 
\[
\varprojlim ((\tau_A \otimes_k \tau_A)/(\a^n\otimes_{\k} A + A\otimes_{\k} \a^n) )
\]
are equal.
\end{proof}

\begin{rem}
In the notations of the above lemma, note that composing the completion map 
\[
\widehat{A}\otimes_{\k} \widehat{A} \to \Lambda_J(\widehat{A}\otimes_{\k} \widehat{A})
\]
with the isomorphism
\[
\Lambda_J(\widehat{A}\otimes_{\k} \widehat{A}) \to \Lambda_I(A\otimes_{\k} A)
\]
we obtain a ring map
\[
\widehat{A}\otimes_{\k} \widehat{A} \to \Lambda_I(A\otimes_{\k} A).
\]
Using this map, given $M \in \opn{Mod} (A\otimes_{\k} A)$, we will be able to regard 
\[
\widehat{\Lambda}_I(M) \in \opn{Mod}(\Lambda_I(A\otimes_{\k} A))
\]
as an $\widehat{A}\otimes_{\k} \widehat{A}$-module which is $J$-adically complete.
\end{rem}

We may now refine Theorem \ref{thm:LLambda-Of-Hochscild}, and present the derived completion of the Hochschild cohomology complex as a Hochschild cohomology complex over the completion:

\begin{thm}\label{thm:der-com-of-ext}
Let $\k$ be a commutative ring, and let $A$ be a flat noetherian $\k$-algebra such that $A\otimes_{\k} A$ is noetherian. Let $\a\subseteq A$ be an ideal, and let $M$ be a finitely generated $A\otimes_{\k} A$-module. Set 
\[
I := \a\otimes_{\k} A + A\otimes_{\k} \a \subseteq A\otimes_{\k} A.
\]
Then there is a functorial isomorphism
\[
\mrm{L}\widehat{\Lambda}_{\a} ( \mrm{R}\opn{Hom}_{A\otimes_{\k} A}(A,M) ) \cong \mrm{R}\opn{Hom}_{\widehat{A}\otimes_{\k} \widehat{A} }(\widehat{A}, \widehat{M}),
\]
in $\mrm{D}(\opn{Mod} \widehat{A})$, where $\widehat{A} := \widehat{\Lambda}_{\a}(A)$ and $\widehat{M} := \widehat{\Lambda}_I(M)$.
\end{thm}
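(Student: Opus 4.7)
The plan is to chain three identifications: first invoke Theorem \ref{thm:LLambda-Of-Hochscild} to bring the derived completion $\mrm{L}\widehat{\Lambda}_{\a}$ inside the $\mrm{R}\opn{Hom}$; then use the noetherian and finite-generation hypotheses to replace the inner derived completion $\mrm{L}\widehat{\Lambda}_I(M)$ by the ordinary completion $\widehat{M}$; and finally use Corollary \ref{cor:compare-adic-ext} to transfer the outer $\mrm{R}\opn{Hom}$ from $A\widehat{\otimes}_{\k} A$ down to $\widehat{A}\otimes_{\k} \widehat{A}$.

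For the first step, since $A\otimes_{\k} A$ is noetherian by hypothesis, every ideal in it, including $I$, is weakly proregular, so condition (3) of Theorem \ref{thm:LLambda-Of-Hochscild} is satisfied and that theorem produces a functorial isomorphism
\[
\mrm{L}\widehat{\Lambda}_{\a} \mrm{R}\opn{Hom}_{A\otimes_{\k} A}(A, M) \cong \mrm{R}\opn{Hom}_{A\widehat{\otimes}_{\k} A}(\widehat{A}, \mrm{L}\widehat{\Lambda}_I(M)).
\]
For the second step, $M$ admits a resolution $P \to M$ by finitely generated free $A\otimes_{\k} A$-modules (it is finitely generated over the noetherian ring $A\otimes_{\k} A$), and $P$ is K-flat. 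On each such free module $\widehat{\Lambda}_I$ coincides with $A\widehat{\otimes}_{\k} A\otimes_{A\otimes_{\k} A}(-)$; since $A\widehat{\otimes}_{\k} A$ is flat over the noetherian ring $A\otimes_{\k} A$, the complex $\widehat{\Lambda}_I(P) \cong A\widehat{\otimes}_{\k} A \otimes_{A\otimes_{\k} A} P$ is quasi-isomorphic to $A\widehat{\otimes}_{\k} A \otimes_{A\otimes_{\k} A} M = \widehat{M}$. Hence $\mrm{L}\widehat{\Lambda}_I(M) \cong \widehat{M}$ in $\mrm{D}(\opn{Mod} A\widehat{\otimes}_{\k} A)$.

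For the third and most delicate step, set $J := \widehat{\a}\otimes_{\k} \widehat{A} + \widehat{A}\otimes_{\k} \widehat{\a} \subseteq \widehat{A}\otimes_{\k} \widehat{A}$. By Lemma \ref{lem:completion-of-tensor-of-completion} we have $J = I\cdot(\widehat{A}\otimes_{\k} \widehat{A})$ and $\Lambda_J(\widehat{A}\otimes_{\k} \widehat{A}) \cong A\widehat{\otimes}_{\k} A$; moreover iterated flatness of $A\to\widehat{A}$ shows that $\widehat{A}\otimes_{\k} \widehat{A}$ is flat over $A\otimes_{\k} A$, so Remark \ref{remark-wpr-flat} yields that $J$ is weakly proregular. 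Applying Corollary \ref{cor:compare-adic-ext} with ambient ring $\widehat{A}\otimes_{\k} \widehat{A}$, ideal $J$, target ring $A\widehat{\otimes}_{\k} A$ and first $\mrm{R}\opn{Hom}$-argument $\widehat{A}$ then gives
\[
\mrm{R}\opn{Hom}_{\widehat{A}\otimes_{\k}\widehat{A}}(\widehat{A}, \mrm{L}\Lambda_J(\widehat{M})) \cong \mrm{R}\opn{Hom}_{A\widehat{\otimes}_{\k} A}(\widehat{A}, \mrm{L}\widehat{\Lambda}_J(\widehat{M})).
\]
Since $\widehat{M}$ is finitely generated over the noetherian $I$-adically (hence $J$-adically) complete ring $A\widehat{\otimes}_{\k} A$, it is complete, and Lemma \ref{lem:wide-Lambda-Of-Complete} identifies both inner derived completions with $\widehat{M}$. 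Composing the three identifications yields the claimed isomorphism. The main obstacle I anticipate is the bookkeeping in this final step: one must verify simultaneously that $J$ is weakly proregular in $\widehat{A}\otimes_{\k}\widehat{A}$, that the $J$-adic topology on $\widehat{A}\otimes_{\k}\widehat{A}$ agrees with the $I$-adic topology when restricted to $\widehat{M}$, and that the appropriate completion ring matches $A\widehat{\otimes}_{\k}A$ — all of which is precisely the combined content of Lemma \ref{lem:completion-of-tensor-of-completion} and Remark \ref{remark-wpr-flat}.
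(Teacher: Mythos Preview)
Your proof is correct and follows essentially the same route as the paper's: apply Theorem~\ref{thm:LLambda-Of-Hochscild} to pass from $A\otimes_{\k} A$ to $A\widehat{\otimes}_{\k} A$, identify $\mrm{L}\widehat{\Lambda}_I(M)$ with $\widehat{M}$ using noetherianity and finite generation, then use Lemma~\ref{lem:completion-of-tensor-of-completion}, weak proregularity of $J$ (via Remark~\ref{remark-wpr-flat}), and Lemma~\ref{lem:wide-Lambda-Of-Complete} to descend to $\widehat{A}\otimes_{\k}\widehat{A}$. The only cosmetic difference is that in the final step the paper packages the descent by invoking Theorem~\ref{thm:LLambda-Of-Hochscild} a second time (now for the ring $\widehat{A}$), whereas you invoke Corollary~\ref{cor:compare-adic-ext} directly; since that corollary is precisely what drives the relevant isomorphism inside Theorem~\ref{thm:LLambda-Of-Hochscild}, the two arguments are the same in substance.
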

\begin{proof}
Set $A\widehat{\otimes}_{\k} A := \Lambda_I(A\otimes_{\k} A)$.  By assumption, $A\otimes_{\k} A$ is noetherian, so $I$ is weakly proregular.
According to Theorem \ref{thm:LLambda-Of-Hochscild}, there is a functorial isomorphism
\begin{equation}\label{eqn:der-comp1}
\mrm{L}\widehat{\Lambda}_{\a} ( \mrm{R}\opn{Hom}_{A\otimes_{\k} A}(A,M) ) \cong 
\mrm{R}\opn{Hom}_{A\widehat{\otimes}_{\k} A}(\widehat{A},\mrm{L}\widehat{\Lambda}_I(M)). 
\end{equation}
Because $A\otimes_{\k} A$ is noetherian, and $M$ is a finitely generated module, it follows that 
\[
\mrm{L}\widehat{\Lambda}_I(M) \cong \widehat{\Lambda}_I(M) = \widehat{M}
\]
in $\mrm{D}(\opn{Mod} A\widehat{\otimes}_{\k} A)$. Hence, by Lemma \ref{lem:completion-of-tensor-of-completion}, there is a functorial isomorphism
\begin{equation}\label{eqn:der-comp2}
\mrm{R}\opn{Hom}_{A\widehat{\otimes}_{\k} A}(\widehat{A},\widehat{M}) \cong
\mrm{R}\opn{Hom}_{\widehat{A} \widehat{\otimes}_{\k} \widehat{A}}(\widehat{A},\widehat{M}) 
\end{equation}
in $\mrm{D}(\opn{Mod} \widehat{A})$. Let $\widehat{\a} := \a\cdot \widehat{A} \subseteq \widehat{A}$.
Since $A$ is noetherian, the map $A\to\widehat{A}$ is flat, so the map $A\otimes_{\k} A \to \widehat{A}\otimes_{\k} \widehat{A}$ is also flat. Hence, by Remark \ref{remark-wpr-flat}, the ideal $J := I\cdot (\widehat{A}\otimes_{\k} \widehat{A}) = \widehat{\a}\otimes_{\k} \widehat{A} + \widehat{A}\otimes_{\k} \widehat{\a}$ is weakly proregular. 

Because $A\otimes_{\k} A$ is noetherian, its completion $\Lambda_I(A\otimes_{\k} A)$ is also noetherian, so again by Lemma 4.7, the ring $\Lambda_J(\widehat{A}\otimes_{\k} \widehat{A})$ is noetherian. Hence, by Lemma \ref{lem:wide-Lambda-Of-Complete}, we have that $\mrm{L}\Lambda_J(\widehat{M}) \cong \mrm{L}\widehat{\Lambda}_J(\widehat{M}) \cong \widehat{M}$. Applying Theorem \ref{thm:LLambda-Of-Hochscild} to the ring $\widehat{A}$, we have a functorial isomorphism
\[
\mrm{R}\opn{Hom}_{\widehat{A}\otimes_{\k} \widehat{A}}(\widehat{A}, \mrm{L}\Lambda_J (M)) \cong 
\mrm{R}\opn{Hom}_{\widehat{A}\widehat{\otimes}_{\k} \widehat{A}}(\widehat{A}, \mrm{L}\widehat{\Lambda}_J (M)).
\]
Composing this isomorphism with the isomorphisms of equations (\ref{eqn:der-comp1}) and (\ref{eqn:der-comp2}), we obtain the result.
\end{proof}

Our final goal in this section is to show that when taking cohomology in the above theorem, derived completion may be replaced with ordinary completion. The next simple lemma is needed for the proof of Proposition \ref{prop:completion-of-fg}.

\begin{lem}\label{lem:hom-from-free}
Let $A$ be a noetherian ring, let $P$ be a bounded complex of free $A$-modules, and let $M$ be a finitely generated $A$-module. Then the canonical map
\[
\opn{Hom}_A(P,A) \otimes_A M \to \opn{Hom}_A(P,M)
\]
is an isomorphism of complexes.
\end{lem}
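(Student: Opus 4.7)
The plan is to reduce the statement to a degree-wise claim and then use a standard finite-presentation argument. Since the canonical map is natural and compatible with the differentials, it is a map of complexes, so it suffices to check that for each fixed degree $i$, the map
\[
\opn{Hom}_A(P^i, A) \otimes_A M \to \opn{Hom}_A(P^i, M)
\]
is an isomorphism of $A$-modules; the boundedness of $P$ is not actually used beyond ensuring everything is well-defined.

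Thus, fix a free $A$-module $F = P^i$ (of arbitrary rank, say $F = A^{(S)}$ for some set $S$). The plan is to show that for any finitely presented $A$-module $N$, the canonical map
\[
\eta_N: \opn{Hom}_A(F,A) \otimes_A N \to \opn{Hom}_A(F,N)
\]
is an isomorphism. First, I would observe that $\eta_{A^k}$ is an isomorphism for finite $k$, since both Hom (in the second variable) and $\otimes$ commute with finite direct sums. Next, I would pick a finite presentation $A^n \to A^m \to N \to 0$ and apply the two functors $\opn{Hom}_A(F,A) \otimes_A -$ and $\opn{Hom}_A(F,-)$. The former is right exact, yielding
\[
\opn{Hom}_A(F,A)^n \to \opn{Hom}_A(F,A)^m \to \opn{Hom}_A(F,A)\otimes_A N \to 0.
\]
The latter is left exact, but since $A^n \to A^m \to N \to 0$ is exact with $A^n, A^m$ finitely generated free, applying $\opn{Hom}_A(F, -)$ still gives an exact sequence on the right:
\[
\opn{Hom}_A(F,A)^n \to \opn{Hom}_A(F,A)^m \to \opn{Hom}_A(F,N) \to 0,
\]
since the splitting off of the image and the coimage only uses that $A^m$ is free of finite rank. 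A diagram chase (or an application of the five lemma, using that $\eta$ is natural and that the maps on the left two terms are isomorphisms) then shows $\eta_N$ is an isomorphism.

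Finally, since $A$ is noetherian and $M$ is finitely generated, the kernel of any surjection $A^m \twoheadrightarrow M$ is again finitely generated, so $M$ is finitely presented. Applying the previous paragraph with $N = M$ gives an isomorphism in each degree, and assembling these for $i$ in the (finite) range of nonzero components of $P$ yields the claimed isomorphism of complexes. I do not anticipate a genuine obstacle here: the only point requiring care is that the free modules $P^i$ need not be finitely generated, which is precisely why the finite presentation of $M$ must be used on the $M$-side rather than finite generation of $P^i$ on the $P$-side.
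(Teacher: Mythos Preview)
Your overall strategy---reduce to a single free module and run the finite-presentation trick---is exactly what the paper does, and your proof is correct once one point is clarified. The justification you give for right-exactness of $\opn{Hom}_A(F,-)$ on the presentation $A^n\to A^m\to N\to 0$ is not the right one: nothing about $A^m$ being free of finite rank is relevant, and there is no ``splitting off of the image and the coimage'' in general. The correct (and simpler) reason is that $F$ is free, hence projective, so $\opn{Hom}_A(F,-)$ is exact on all short exact sequences; in particular it preserves the surjection $A^m\twoheadrightarrow N$. With that fix, your five-lemma argument goes through verbatim.

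The paper's write-up differs only in how it justifies the diagram chase: instead of invoking projectivity of $F$, it observes that $\opn{Hom}_A(F,A)\cong A^S$ is a \emph{flat} $A$-module because $A$ is noetherian, so $\opn{Hom}_A(F,A)\otimes_A -$ is exact as well; then both functors are exact and agree on finitely generated free modules, hence on all finitely presented modules. Your route is arguably more elementary, since it avoids the (true but nontrivial) fact that arbitrary direct products of copies of a noetherian ring are flat.
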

\begin{proof}
It is enough to show this in the case where $P$ is a single free $A$-module. In that case, note that $\opn{Hom}_A(P,A)$ is a direct product of copies of $A$, and since $A$ is noetherian, this is a flat $A$-module. Thus, both of the functors $\opn{Hom}_A(P,A)\otimes_A -$ and $\opn{Hom}_A(P,-)$ are exact, and if $M$ is a finitely generated free $A$-module, then the canonical map $\opn{Hom}_A(P,A) \otimes_A M \to \opn{Hom}_A(P,M)$ is obviously an isomorphism. Hence, the result of the lemma follows from the standard finite presentation trick.
\end{proof}

In case $M$ is bounded above, the next Proposition is \cite[Proposition 2.7]{Fr}. We however wish to apply this result to the Hochschild complex which is bounded below, so we give a proof that works for complexes without any boundedness condition.

\begin{prop}\label{prop:completion-of-fg}
Let $A$ be a noetherian ring, and let $\a\subseteq A$ be an ideal. Then there is an isomorphism
\[
\mrm{L}\Lambda_{\a} (M) \cong \widehat{A}\otimes_A M
\]
of functors $\mrm{D}_{\mrm{f}} (\opn{Mod} A) \to \mrm{D}(\opn{Mod} A)$.
\end{prop}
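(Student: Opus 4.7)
The approach is to exhibit a natural comparison map, verify it on a single finitely generated module by a direct computation with the telescope, and then extend to arbitrary complexes in $\mrm{D}_{\mrm{f}}$ via a finite-cohomological-dimension (way-out) argument. This last step is where the work lies, since the cited result \cite[Proposition 2.7]{Fr} handles only bounded above complexes, whereas the Hochschild complexes $\mrm{R}\opn{Hom}_{A\otimes_{\k} A}(A,M)$ to which we will ultimately apply this proposition are cohomologically unbounded above.

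First, choose a finite sequence $\mathbf{a}$ generating $\a$. Since $A$ is noetherian, $\a$ is weakly proregular, so the preliminaries of Section \ref{section:review} yield a functorial isomorphism $\mrm{L}\Lambda_{\a}(-)\cong \opn{Hom}_A(\opn{Tel}(A;\mathbf{a}),-)$. The canonical evaluation morphism
\[
\eta_M:\opn{Hom}_A(\opn{Tel}(A;\mathbf{a}),A)\otimes_A M \longrightarrow \opn{Hom}_A(\opn{Tel}(A;\mathbf{a}),M)
\]
is natural in $M$. Combined with the quasi-isomorphism $\opn{Hom}_A(\opn{Tel}(A;\mathbf{a}),A)\cong \widehat{A}$ of \cite[Corollary 5.23]{PSY1}, and with the fact that $\widehat{A}$ is flat over the noetherian ring $A$ (so that $\widehat{A}\otimes^{\mrm{L}}_A(-)\cong \widehat{A}\otimes_A(-)$), this produces a natural transformation $\eta:\widehat{A}\otimes_A(-)\to \mrm{L}\Lambda_{\a}(-)$ of triangulated functors $\mrm{D}(\opn{Mod} A)\to \mrm{D}(\opn{Mod} A)$, and the task is to show that $\eta$ is an isomorphism on $\mrm{D}_{\mrm{f}}(\opn{Mod} A)$.

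For a single finitely generated module $M$, the telescope $\opn{Tel}(A;\mathbf{a})$ is a bounded complex of (countably generated) free $A$-modules, so Lemma \ref{lem:hom-from-free} applies directly and shows that $\eta_M$ is already an isomorphism at the level of complexes. To extend this to arbitrary objects of $\mrm{D}_{\mrm{f}}(\opn{Mod} A)$, I would invoke the standard way-out lemma: both functors under comparison have finite cohomological dimension — the source because $\widehat{A}$ is flat, and the target because $\opn{Hom}_A(\opn{Tel}(A;\mathbf{a}),-)$ is computed by a bounded complex of flat $A$-modules — so they are way-out both left and right. A natural transformation between such functors that is an isomorphism on a generating class of objects is an isomorphism on every complex whose cohomology modules lie in that class; applying this to $\eta$ with the class of finitely generated $A$-modules finishes the proof. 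The main obstacle is precisely this unbounded extension, and the key enabling fact is the uniform bound on the cohomological dimension of $\mrm{L}\Lambda_{\a}$ furnished by weak proregularity of $\a$.
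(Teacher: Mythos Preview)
Your proof is correct and follows essentially the same route as the paper: construct the comparison map via the telescope complex, verify it on finitely generated modules using Lemma~\ref{lem:hom-from-free}, and then invoke a way-out argument (the paper cites \cite[Proposition~I.7.1]{RD}) using that both functors have finite cohomological dimension. One small quibble: your justification for the finite cohomological dimension of $\mrm{L}\Lambda_{\a}$ should simply be that $\opn{Tel}(A;\mathbf{a})$ is a \emph{bounded} complex of projectives (the ``flat'' in your phrasing is irrelevant here), but the conclusion is correct.
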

\begin{proof}
There is a sequence of morphisms of functors
\[
\widehat{A}\otimes_A M \cong \opn{Hom}_A(\opn{Tel}(A;\mathbf{a}), A) \otimes_A M \to \opn{Hom}_A(\opn{Tel}(A;\mathbf{a}), M) \cong \mrm{L}\Lambda_{\a} (M).
\]
Because $A$ is assumed to be noetherian, $\widehat{A}$ is flat over $A$. Hence, both $\mrm{L}\Lambda_{\a}(-)$ and $-\otimes_A \widehat{A}$ are functors of finite cohomological dimension. Thus, by \cite[Proposition I.7.1]{RD}, it is enough to show that the above morphism is an isomorphism in the case where $M$ is a finitely generated $A$-module, and this follows from Lemma \ref{lem:hom-from-free}.
\end{proof}

Here is result promised in the title of this paper:

\begin{thm}\label{thm:main}
Let $\k$ be a commutative ring, and let $A$ be a flat noetherian $\k$-algebra such that $A\otimes_{\k} A$ is noetherian. Let $\a\subseteq A$ be an ideal, and let $M$ be a finitely generated $A\otimes_{\k} A$-module.
Then for any $n \in \mathbb{N}$, there is a functorial isomorphism
\[
\Lambda_{\a} \left(\opn{Ext}^n_{A\otimes_{\k} A}(A,M)\right) \cong \opn{Ext}^n_{\widehat{A}\otimes_{\k} \widehat{A}}(\widehat{A},\widehat{M})
\]
If moreover, either
\begin{enumerate}
\item $\k$ is a field, or 
\item $A$ is projective over $\k$, $\a$ is a maximal ideal, and $M$ is a finitely generated $A$-module,
\end{enumerate}
there is also a functorial isomorphism
\[
\Lambda_{\a} \left( \mrm{HH}^n(A/\k,M )  \right) \cong \mrm{HH}^n(\widehat{A}/\k,\widehat{M}).
\]
\end{thm}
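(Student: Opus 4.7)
The plan is to obtain both isomorphisms by taking cohomology in Theorem \ref{thm:der-com-of-ext}, which provides the functorial isomorphism
\[
\mrm{L}\widehat{\Lambda}_{\a}(\mrm{R}\opn{Hom}_{A\otimes_{\k} A}(A,M))\cong\mrm{R}\opn{Hom}_{\widehat{A}\otimes_{\k}\widehat{A}}(\widehat{A},\widehat{M})
\]
in $\mrm{D}(\opn{Mod}\widehat{A})$. Taking $H^n$ on the right-hand side yields $\opn{Ext}^n_{\widehat{A}\otimes_{\k}\widehat{A}}(\widehat{A},\widehat{M})$, and the first isomorphism of the theorem reduces to showing that the $n$-th cohomology of the left-hand side is naturally isomorphic to $\Lambda_{\a}(\opn{Ext}^n_{A\otimes_{\k} A}(A,M))$. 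First I would restrict along the forgetful functor to $\mrm{D}(\opn{Mod} A)$, under which $\mrm{L}\widehat{\Lambda}_{\a}$ becomes $\mrm{L}\Lambda_{\a}$, and then argue that the complex $C:=\mrm{R}\opn{Hom}_{A\otimes_{\k} A}(A,M)$ lies in $\mrm{D}_{\mrm{f}}(\opn{Mod} A)$. For this, each $\opn{Ext}^n_{A\otimes_{\k} A}(A,M)$ is finitely generated over the noetherian ring $A\otimes_{\k} A$ (since both $A$ and $M$ are finitely generated over $A\otimes_{\k} A$), and because the $A\otimes_{\k} A$-action on $A$ factors through $\mu:A\otimes_{\k} A\to A$, the induced action on each Ext module also factors through $\mu$; hence each $\opn{Ext}^n_{A\otimes_{\k} A}(A,M)$ is finitely generated as an $A$-module. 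Proposition \ref{prop:completion-of-fg} then gives $\mrm{L}\Lambda_{\a}(C)\cong\widehat{A}\otimes_A C$, and flatness of $\widehat{A}$ over the noetherian ring $A$ together with finite generation of $H^n(C)$ produce isomorphisms $H^n(\widehat{A}\otimes_A C)\cong\widehat{A}\otimes_A\opn{Ext}^n_{A\otimes_{\k} A}(A,M)\cong\Lambda_{\a}(\opn{Ext}^n_{A\otimes_{\k} A}(A,M))$, yielding the first isomorphism.

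For the Hochschild cohomology refinement, the plan in both subcases is to identify $\mrm{HH}^n$ with $\opn{Ext}^n$ on each side and then invoke the first isomorphism. In case (1), $\k$ is a field so every $\k$-module is free; consequently each $A^{\otimes^n_{\k}}$ and $\widehat{A}^{\otimes^n_{\k}}$ is free over the corresponding enveloping algebra, the bar complexes $\mathcal{B}(A/\k)$ and $\mathcal{B}(\widehat{A}/\k)$ are projective resolutions, and $\mrm{HH}^n(A/\k,M)\cong\opn{Ext}^n_{A\otimes_{\k} A}(A,M)$ and $\mrm{HH}^n(\widehat{A}/\k,\widehat{M})\cong\opn{Ext}^n_{\widehat{A}\otimes_{\k}\widehat{A}}(\widehat{A},\widehat{M})$. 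The already-established first isomorphism then finishes case (1).

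In case (2), projectivity of $A$ over $\k$ again yields $\mrm{HH}^n(A/\k,M)\cong\opn{Ext}^n_{A\otimes_{\k} A}(A,M)$, so it remains to prove that $\mrm{HH}^n(\widehat{A}/\k,\widehat{M})\cong\opn{Ext}^n_{\widehat{A}\otimes_{\k}\widehat{A}}(\widehat{A},\widehat{M})$. Here I would invoke Proposition 3.1 of \cite{BF} exactly as it is used in the proof of Corollary \ref{cor:Answer-To_BF}: the maximality of $\a$ makes $\widehat{A}$ a complete noetherian local ring with maximal ideal $\widehat{\a}$, and since $M$ is finitely generated over the noetherian ring $A$, the completion $\widehat{M}$ is $\widehat{\a}$-adically complete, hence complete with respect to the maximal ideal $\ker(\widehat{A}\otimes_{\k}\widehat{A}\to\widehat{A}/\widehat{\a})$ of the enveloping algebra, providing the completeness condition needed for the cited result. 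Combined with the first isomorphism, this finishes case (2). The step I expect to require the most care is the verification of the hypotheses of the cited result of \cite{BF} in this non-local base setting, which is precisely why case (2) imposes both maximality of $\a$ and finite generation of $M$ as an $A$-module.
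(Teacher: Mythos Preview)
Your proposal is correct and follows essentially the same route as the paper: invoke Theorem~\ref{thm:der-com-of-ext}, identify $\mrm{L}\widehat{\Lambda}_{\a}$ with $\mrm{L}\Lambda_{\a}$ after restriction, use that $\mrm{R}\opn{Hom}_{A\otimes_{\k} A}(A,M)\in\mrm{D}_{\mrm{f}}(\opn{Mod} A)$ together with Proposition~\ref{prop:completion-of-fg} and flatness of $\widehat{A}$ over $A$ to replace derived by ordinary completion on cohomology, and then handle the Hochschild refinement via projectivity of the bar resolution in case~(1) and \cite[Proposition~3.1]{BF} in case~(2). The paper carries out exactly these steps; your added justification that the $A\otimes_{\k} A$-action on each $\opn{Ext}^n$ factors through $\mu$ is the content behind the paper's one-line assertion that the Hochschild complex lies in $\mrm{D}_{\mrm{f}}(\opn{Mod} A)$.
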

\begin{proof}
The assumptions of the theorem ensure that 
\[
\mrm{R}\opn{Hom}_{A\otimes_{\k} A}(A,M) \in \mrm{D}_{\mrm{f}}(\opn{Mod} A),
\]
so since $A$ is noetherian, we have a functorial isomorphism
\[
\Lambda_{\a} \left(\opn{Ext}^n_{A\otimes_{\k} A}(A,M)\right) \cong \widehat{A} \otimes_A H^n(\mrm{R}\opn{Hom}_{A\otimes_{\k} A}(A,M)).
\]
Flatness of $\widehat{A}$ over $A$ implies (for example by \cite[Corollary 2.12]{PSY1}) that there is a natural isomorphism
\[
 \widehat{A} \otimes_A H^n(\mrm{R}\opn{Hom}_{A\otimes_{\k} A}(A,M)) \cong H^n(\widehat{A} \otimes_A  \mrm{R}\opn{Hom}_{A\otimes_{\k} A}(A,M) ).
\]
Hence, by Proposition \ref{prop:completion-of-fg}, it is enough to compute the $n$-th cohomology of the complex
\[
\mrm{L}\Lambda_{\a} (\mrm{R}\opn{Hom}_{A\otimes_{\k} A}(A,M)).
\]
Letting 
\[
\opn{Rest}_{\widehat{A}/A}:\mrm{D}(\opn{Mod} \widehat{A}) \to \mrm{D}(\opn{Mod} A)
\]
be the forgetful functor, the above complex is equal to
\[
\opn{Rest}_{\widehat{A}/A} \circ \mrm{L}\widehat{\Lambda}_{\a}(\mrm{R}\opn{Hom}_{A\otimes_{\k} A}(A,M)).
\]
By Theorem \ref{thm:der-com-of-ext}, there is a functorial isomorphism
\[
\mrm{L}\widehat{\Lambda}_{\a}\mrm{R}\opn{Hom}_{A\otimes_{\k} A}(A,M) \cong 
\mrm{R}\opn{Hom}_{\widehat{A}\otimes_{\k} \widehat{A}}(\widehat{A},\widehat{M})
\]
in $\mrm{D}(\opn{Mod} \widehat{A})$, so applying the forgetful functor we obtain an $A$-linear natural isomorphism
\[
\Lambda_{\a} (\opn{Ext}^n_{A\otimes_{\k} A}(A,M))  \cong \opn{Ext}^n_{\widehat{A}\otimes_{\k} \widehat{A}}(\widehat{A},\widehat{M}).
\]
(Actually, any $A$-linear map between $\widehat{A}$-modules is automatically $\widehat{A}$-linear, so this isomorphism is even an isomorphism of $\widehat{A}$-modules). This establishes the first claim of the theorem. If $\k$ is a field then the second claim obviously follows from the first one. Assume now that $A$ is projective over $\k$, that $\a$ is a maximal ideal, and that $M$ is a finitely generated $A$-module. Let $\phi$ be the composition of the diagonal map $\widehat{A}\otimes_{\k} \widehat{A} \to \widehat{A}$ with the map $\widehat{A} \to \widehat{A}/\a\widehat{A}$. Then $m = \ker(\phi) \subseteq \widehat{A}\otimes_{\k} \widehat{A}$ is a maximal ideal, and the image of $m$ in $\widehat{A}$ is equal to $\a\widehat{A}$. Hence $\widehat{M}$ is $m$-adically complete, so by \cite[Proposition 3.1]{BF}, the canonical map
\[
\mrm{HH}^n(\widehat{A}/\k,\widehat{M}) \to \opn{Ext}^n_{\widehat{A}\otimes_{\k} \widehat{A}}(\widehat{A},\widehat{M})
\]
is an isomorphism. This proves the second claim.
\end{proof}

The above result allows us to compute the Hochschild cohomology of power series rings, which is new as far as we know.

\begin{exa}
Let $\k$ be a noetherian ring, let $A = \k[x_1,\dots,x_n]$, let $\a=(x_1,\dots,x_n)$, and let $M=A$. Note that $A$ is projective over $\k$, and that $\Lambda_{\a}(A) = \k[[x_1,\dots,x_n]]$. Hence, by the above theorem, we have that
\[
\opn{Ext}^i_{\widehat{A}\otimes_{\k} \widehat{A}}(\widehat{A},\widehat{A})
\cong \Lambda_{\a} ( \mrm{HH}^i(\k[x_1,\dots,x_n]/\k,\k[x_1,\dots,x_n] ).
\]
By the Hochschild-Kostant-Rosenberg theorem, the right hand side is equal to
\[
\Lambda_{\a} \wedge^i ( \k[x_1,\dots,x_n]^n ),
\]
so we have an isomorphism
\[
\opn{Ext}^i_{\widehat{A}\otimes_{\k} \widehat{A}}(\widehat{A},\widehat{A}) \cong \wedge^i ( \k[[x_1,\dots,x_n]]^n ).
\]
If moreover $\k$ is a field, we obtain that
\[
\mrm{HH}^i(\k[[x_1,\dots,x_n]]/\k,\k[[x_1,\dots,x_n]]) \cong \wedge^i ( \k[[x_1,\dots,x_n]]^n ).
\]
We remark that one can also use Corollary \ref{cor:adic-hh-is-hh} to make this calculation. 
\end{exa}

\begin{exa}
Let $A$ be a noetherian ring, and let $\a\subseteq A$ be an ideal. Then by Theorem \ref{thm:der-com-of-ext}, we have that
\[
\mrm{R}\opn{Hom}_{ \widehat{A} \otimes_A \widehat{A} } (\widehat{A}, \widehat{A} ) \cong \mrm{L}\widehat{\Lambda}_{\a} \mrm{R}\opn{Hom}_A(A,A) = \widehat{A}.
\]
Assume now that $\a$ is a maximal ideal. Then by Theorem \ref{thm:main}  we have that
\[
\mrm{HH}^n(\widehat{A}/A,\widehat{A}) = 0
\]
for all $n \ne 0$, and $\mrm{HH}^0(\widehat{A}/A,\widehat{A}) = \widehat{A}$. Specializing to the case where $A = \mathbb{Z}$, and $\a = (p)$ for some prime number $p$, we have computed the absolute Hochschild cohomology of the ring of p-adic integers $\mathbb{Z}_p$.
\end{exa}

\begin{rem}
The above examples are both particular cases of an adic Hochschild-Kostant-Rosenberg theorem which applies for any 	formally smooth adic algebra $A$ such that $A/\a$ is essentially of finite type over $\k$. 
This can be shown using Corollary \ref{cor:adic-hh-is-hh} and by studying the local structure of the completed diagonal $\ker(A\widehat{\otimes}_{\k} A \to A)$. A full proof of this will appear elsewhere.
\end{rem}

\section{Hochschild homology and derived torsion}\label{section:hhomology}

In this short and final section we discuss relations between Hochschild homology and the derived torsion functor.

\begin{thm}\label{thm:RGamma-Of-HochscildHomology}
Let $\k$ be a commutative ring, let $A$ be a flat noetherian $\k$-algebra, and let $\a\subseteq A$ be an ideal. Assume further that at least one of the following holds:
\begin{enumerate}
\item The ring $\k$ is an absolutely flat ring (e.g., a field).
\item $A/\a$ is essentially of finite type over $\k$.
\item The ideal $I:=\a\otimes_{\k} A + A\otimes_{\k} \a \subseteq A\otimes_{\k} A$ is weakly proregular.
\end{enumerate}
Set $\widehat{A} := \Lambda_{\a}(A)$, $A\widehat{\otimes}_{\k} A := \Lambda_I(A\otimes_{\k} A)$. Then there are isomorphisms
\[
\mrm{R}\widehat{\Gamma}_{\a} ( A\otimes^{\mrm{L}}_{A\otimes_{\k} A} -) \cong
\widehat{A} \otimes^{\mrm{L}}_{A\otimes_{\k} A} \mrm{R}\Gamma_I(-) \cong
\widehat{A} \otimes^{\mrm{L}}_{A\widehat{\otimes}_{\k} A} \mrm{R}\widehat{\Gamma}_I(-)
\]
of functors $\mrm{D}(\opn{Mod} A\otimes_{\k} A) \to \mrm{D}(\opn{Mod} \widehat{A})$.
\end{thm}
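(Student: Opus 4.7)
The plan is to mirror the proof of Theorem \ref{thm:LLambda-Of-Hochscild}, replacing the Hom/completion corollaries (\ref{cor:compare-adic-ext}, \ref{cor:LLam-ofHom}) with their tensor/torsion duals (\ref{cor:compare-adic-tor}, \ref{cor:RGam-ofTen}), both of which were already recorded in Section \ref{section:widehat} for exactly this kind of application.

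First, I would dispose of the weak proregularity hypotheses. Under assumption (1), Proposition \ref{prop:wpr-over-abflat} gives that $I$ is weakly proregular; under (2), Theorem \ref{thm:formally-finite-type-wpr} does the same; under (3) the hypothesis is built-in. In all three cases, since $A$ is noetherian, $\a$ is automatically weakly proregular, and the image of $I$ under the multiplication map $\mu: A\otimes_{\k} A \to A$ is precisely $\a$ (both summands $\a\otimes_{\k} A$ and $A\otimes_{\k} \a$ map onto $\a$). Thus the hypotheses of Corollary \ref{cor:RGam-ofTen} are met for the ring extension $A\otimes_{\k} A \to A$ with the ideal $I$.

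Next, for the first isomorphism, I would apply Corollary \ref{cor:RGam-ofTen} with $A \rightsquigarrow A\otimes_{\k} A$, $B \rightsquigarrow A$, $\a \rightsquigarrow I$ and $\b \rightsquigarrow \a$. Since $\widehat{B} = \Lambda_\a(A) = \widehat{A}$, applying the corollary to any $M \in \mrm{D}(\opn{Mod} A\otimes_{\k} A)$ yields a functorial isomorphism
\[
\mrm{R}\widehat{\Gamma}_{\a}\bigl(A \otimes^{\mrm{L}}_{A\otimes_{\k} A} M\bigr) \;\cong\; \widehat{A}\otimes^{\mrm{L}}_{A\otimes_{\k} A} \mrm{R}\Gamma_I(M)
\]
in $\mrm{D}(\opn{Mod} \widehat{A})$.

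For the second isomorphism, observe that the completion of $\mu$ along $I$ yields a ring map $A\widehat{\otimes}_{\k} A \to \widehat{A}$ through which $\widehat{A}$ acquires a natural $A\widehat{\otimes}_{\k} A$-module structure compatible with its $A\otimes_{\k} A$-structure. We may therefore apply Corollary \ref{cor:compare-adic-tor} with $A \rightsquigarrow A\otimes_{\k} A$, $\widehat{A} \rightsquigarrow A\widehat{\otimes}_{\k} A$, $B \rightsquigarrow \widehat{A}$ and $M \rightsquigarrow \widehat{A}$, to obtain
\[
\widehat{A} \otimes^{\mrm{L}}_{A\otimes_{\k} A} \mrm{R}\Gamma_I(M) \;\cong\; \widehat{A} \otimes^{\mrm{L}}_{A\widehat{\otimes}_{\k} A} \mrm{R}\widehat{\Gamma}_I(M),
\]
which gives the third term and completes the chain.

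The only genuinely non-routine step is the verification in Step 2 that Corollary \ref{cor:RGam-ofTen} may be invoked, i.e.\ that $I$ and $\a = I\cdot A$ are both weakly proregular; this is why the hypotheses of the theorem are framed exactly as they are, and reduces to the weak proregularity theorems of Section \ref{section:WPR}. Everything else is a formal chase analogous to that of Theorem \ref{thm:LLambda-Of-Hochscild}, and in particular there is no need to reopen the DG-algebra machinery of Section \ref{section:widehat}, since Corollaries \ref{cor:compare-adic-tor} and \ref{cor:RGam-ofTen} package it in the form needed here.
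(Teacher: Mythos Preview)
Your proposal is correct and follows exactly the same route as the paper: reduce to the case where $I$ is weakly proregular via Proposition~\ref{prop:wpr-over-abflat} or Theorem~\ref{thm:formally-finite-type-wpr}, then invoke Corollary~\ref{cor:RGam-ofTen} for the first isomorphism and Corollary~\ref{cor:compare-adic-tor} for the second. The paper's own proof is the same argument stated more tersely.
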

\begin{proof}
As in the proof of Theorem \ref{thm:LLambda-Of-Hochscild}, the first two conditions imply the third one, so we may assume $I$ is weakly proregular. The first isomorphism then follows from Corollary \ref{cor:RGam-ofTen}, while the second isomorphism follows from Corollary \ref{cor:compare-adic-tor}.
\end{proof}

\begin{rem}
In view of Theorem \ref{thm:main}, it is natural to ask if Hochschild homology also commutes with adic-completion. Here, the answer is false, even in simple situations. Indeed, let $\k$ be a field of characteristic $0$, let $A = \k[x]$, and let $\a=(x)$. Then 
\[
\mrm{HH}_1(A/\k,A) \cong A,
\]
so that $\Lambda_{\a} (\mrm{HH}_1(A/\k,A)) \cong \k[[x]]$. On the other hand, $\mrm{HH}_1(\widehat{A}/\k,\widehat{A}) \cong \Omega^1_{\k[[x]]/\k}$ is an infinitely generated $\k[[x]]$-module.
\end{rem}

\begin{rem}
As alternative to the badly behaved Hochschild homology of commutative adic algebras, Hubl (\cite{Hu}) developed a theory of adic Hochschild homology, by studying the cohomologies of the functor
\[
A\otimes^{\mrm{L}}_{A\widehat{\otimes}_{\k} A} -.
\]
As our Corollary \ref{cor:adic-hh-is-hh} shows, in the case of Hochschild cohomology, usual Hochschild cohomology coincides with adic Hochschild cohomology, but by the previous remark we see that for Hochschild homology this is not the case.
\end{rem}

\textbf{Acknowledgments.}
The author would like to thank Amnon Yekutieli for some helpful suggestions.
The author is grateful for the anonymous referees for their helpful comments.

\end{document}